\documentclass[12pt]{article}
\usepackage{amsmath,amsfonts,amssymb,amsthm,mathtools,bbm,nicefrac}
\usepackage{xcolor}
\usepackage{graphicx,psfrag,epsf}
\usepackage{enumerate,enumitem}
\usepackage[round]{natbib}
\usepackage{url} 
\usepackage{algorithm}
\usepackage{algpseudocode}
\usepackage{booktabs}
\usepackage{cleveref}

\usepackage{multirow,subcaption}

\newif\ifshowcomments
\showcommentsfalse

\ifshowcomments
  \newcommand{\lom}[1]{{\textcolor{red}{\small [lom]: #1}}}

  \newcommand{\lomtext}[1]{{\textcolor{orange}{#1}}}
\else
  \newcommand{\lom}[1]{}

  \newcommand{\lomtext}[1]{#1}
\fi

\newcommand{\alt}{{}^\sharp\!}
\newcommand{\alttwo}{{}^\flat\!}

\DeclareRobustCommand{\bb}[1]{\mathbb{#1}}

\DeclareRobustCommand{\t}[1]{\text{#1}}

\DeclareRobustCommand{\th}{\t{th}}

\DeclareRobustCommand{\set}[1]{\left\{#1 \right\}}
\DeclareRobustCommand{\Set}[2][]{\left\{#1 \, \middle|\, #2 \right\}}

\DeclareRobustCommand{\D}{\Delta}

\DeclareRobustCommand{\vp}{\varphi}

\newif\ifshownew
\shownewtrue
\ifshownew
  
\else
  
\fi

\newif\ifshort
\shorttrue
\newtheorem{definition}{Definition}
\newtheorem{assumption}{Assumption}
\newtheorem{theorem}{Theorem}
\newtheorem{proposition}{Proposition}
\newtheorem{corollary}{Corollary}

\newtheorem{lemma}{Lemma}

\newtheorem{remark}{Remark}

\usepackage[T1]{fontenc}
\usepackage{tgtermes}

\DeclareRobustCommand{\thetitle}{Demonstration Experiments}

\newcommand{\paperauthors}{%
  Guido Imbens\thanks{Graduate School of Business, Stanford University; Amazon.com},
  Lorenzo Masoero\thanks{Amazon.com},
  Alexander Rakhlin\thanks{Department of Brain and Cognitive Sciences and
Statistics and Data Science Center, MIT; Amazon.com}, \\
  Thomas S. Richardson\thanks{Department of Statistics, University of Washington; Amazon.com}, and
  Suhas Vijaykumar\thanks{Department of Economics, UC San Diego; Amazon.com}}

\def\spacingset#1{\renewcommand{\baselinestretch}%
{#1}\small\normalsize}

\newcommand{\blind}{1}

\date{}

\begin{document}
\spacingset{1}


\if1\blind
{
  \title{\bf \thetitle}
  \author{\paperauthors}
  \date{}
  \maketitle
} \fi
\if0\blind
{
  \bigskip
  \bigskip
  \bigskip
  \begin{center}
    {\LARGE\bf \thetitle}
\end{center}
  \medskip
} \fi

\bigskip
\begin{abstract}
Adaptive experiments are used extensively in online platforms, healthcare and biotechnology, and the social sciences.
Often, the primary goal is not to precisely estimate a treatment effect but to {demonstrate} that at least one candidate intervention yields a positive effect, for some subpopulation and on some measured outcome. We formalize this objective as testing the global null in a threshold bandit framework, and develop two inference procedures that are valid under general adaptive sampling: one that pools information across promising arms, and one based on time-uniform multiple testing of individual arm means. To support the latter, we establish a moderate-deviations principle for the sequential $t$-statistic, justifying asymptotic confidence sequences in settings where the number of arms is large relative to the sample size. To illustrate how adaptive designs can target the proposed statistics, we recast experimental design as bandit optimization with an arm's reward given by its signal-to-noise ratio, and analyze an allocation rule for which we establish a logarithmic regret bound. We apply the methods in a simulation study of targeting unconditional cash transfer programs.
\end{abstract}
\noindent%
{\it Keywords:}  Hypothesis testing, experimental design, multi-armed bandit, sequential inference



\spacingset{1.8}

\section{Introduction}
\label{sec:intro}

Not all randomized experiments are conducted with the same goal in mind.
In many settings, the primary goal is to obtain a precise estimate of the average effect of an intervention. In others, researchers seek to understand treatment effect heterogeneity, for example by modeling effects as a function of pre-treatment covariates.
A third common objective is to determine an optimal treatment policy from a set of  interventions and targeting rules. 

In this paper, we focus on a fourth objective: determining whether some treatment level has a positive effect on at least one outcome for at least some subpopulation. This question arises naturally in early-stage or exploratory experimentation, where a decision maker must determine whether an intervention shows sufficient promise to justify a larger, more costly, and more tightly controlled experiment. Often, decision makers face a large number of candidate interventions, outcomes, or subpopulations, and must allocate limited experimental resources accordingly.

 We study experimental designs that adaptively allocate samples in order to improve the power to detect the existence of effects which exceed a threshold value. We refer to such studies as \emph{demonstration experiments}, reflecting the idea that their main purpose is to establish the presence of an effect rather than to estimate its magnitude. Such experiments may also be called ``proof-of-concept,'' ``exploratory,'' or ``litmus-test'' experiments; in the sequential decision-making literature, the objective is closely related to the \emph{threshold-bandit} problem \citep{locatelli2016optimal,kaufmann2018sequential}. Unlike classical A/B tests, the goal is not to estimate an average treatment effect precisely, nor to identify the single best arm, but rather to accumulate statistical evidence that at least one meaningful effect is present.

What is the potential benefit of adaptivity in this setting? In the simplest case of a single treatment and a single outcome in a homogeneous population, adaptive designs offer little advantage. However, when treatment effects are heterogeneous across arms, outcomes, or subpopulations, adaptively reallocating samples toward promising effects can substantially improve power relative to uniform designs.

We consider three stylized settings that motivate our framework. First, consider an experiment with a treatment taking $k$ discrete values. As units arrive, the experimenter assigns treatments sequentially, with the aim of testing whether any treatment level exceeds a given benchmark. This setting fits most directly into our formal framework and serves as the primary motivating case throughout the paper.

Second, consider a sequential experiment with a binary treatment and a discrete pre-treatment variable defining mutually exclusive subpopulations. At each stage, the experimenter selects which subpopulation to sample and assigns treatment within that group. The objective is to detect whether the treatment has a positive effect for at least one subpopulation. This setting can be naturally viewed as a multi-armed bandit problem, where each arm corresponds to a subpopulation and the goal is to detect the existence of a positive effect rather than to maximize cumulative reward.

Finally, suppose there are $k$ outcomes of interest, and the goal is to detect a difference between treated and control units for some outcome. Different outcome contrasts may yield different power, and adaptively focusing on informative contrasts can improve detection in settings where measurement is costly. Unlike standard bandit problems, experiments of this form may reveal values for multiple contrasts simultaneously, so that the feedback structure is richer than the classical bandit setting in which only the reward of the chosen action is observed.\footnote{The same is true for inference on subpopulations, if the subpopulations are not mutually exclusive.}

Although these examples differ superficially, they share a common structure: the experimenter repeatedly chooses among several options—treatments, subpopulations, or contrasts—and observes noisy outcomes, with the goal of detecting the existence of a meaningful effect. Our analysis formalizes this structure using a multi-armed bandit framework and develops inference procedures that remain valid under general adaptive sampling.

\subsection{Contributions and related work}

This paper makes three main contributions. First, we formalize the problem of demonstration experiments as a hypothesis-testing problem under adaptive experimental design, where the objective is to detect the existence of a large enough effect across multiple arms rather than to estimate effects precisely or to identify a single best arm. We show that this objective leads naturally to test statistics that are robust to strategic sample allocation.

Second, we propose two testing procedures—pooled testing and max testing—that remain valid under a wide class of adaptive sampling schemes. These statistics capture complementary regimes: the pooled statistic aggregates evidence across arms, while the max statistic focuses on the most promising arm. We establish finite-sample and asymptotic validity guarantees under minimal assumptions on the sampling strategy, accommodating both strategic termination and regimes in which the number of arms grows with the horizon.

Third, we study how experimental designs can be adapted to maximize the power of these tests under alternative hypotheses. We show that the resulting design problem can be viewed as an online optimization problem with bandit feedback, and we propose a simple sampling algorithm that achieves good performance in terms of statistical power. Simulation results illustrate how adaptive allocation can substantially outperform uniform designs, particularly in settings with heterogeneous effects.

\subsubsection{Anytime-Valid Inference and Game-Theoretic Statistics}

Our work draws inspiration from the rapidly growing literature on game-theoretic statistics and anytime-valid inference \citep{ramdas2023game}. This line of research studies tests that remain valid under arbitrary, data-dependent stopping or sampling rules: by constructing tests from statistics which are supermartingales under the null, one may ensure uniform control of type-I error across large classes of experimental designs. Early contributions focused on robustness to optional stopping, and the approach has since been shown to have implications for a wide range of statistical problems. Notable recent work on best-arm identification develops inference methods that remain valid under both strategic termination and adaptive sample allocation \citep{howard2022sequential,liang2023experimental}. 

We, too, consider designs in which the experimenter adaptively chooses whether to continue the experiment and which arm to sample next, though we depart from the best-arm hypotheses studied in prior work; our tests are valid for any sequential allocation satisfying a minimal initialization requirement. Another important distinction is that we study test statistics intentionally optimized by the sampling algorithm: we introduce bandit strategies which aim to maximize the value of the proposed statistics themselves, thereby directly optimizing power. This is conceptually related to ``testing by betting'' \citep{shafer2021testing,shekhar2023nonparametric} and to the results of \citet{casgrain2024sequential}; these works also study online optimization of power for sequential tests, although their full-information setting differs from the bandit feedback we consider.

Most closely related is concurrent and independent work by \citet{sandoval2026multi}, who study a similar many-armed global testing problem using e-process techniques (see also \citealp{hsu2025active}, who study nonparametric two-sample testing across heterogeneous sources). Their framework delivers exact, non-asymptotic validity, but requires constructing wealth processes whose form must be specified to match the distribution of each arm (typically via exponential moments). Our approach is complementary, aimed at standard experimental practice: working within the asymptotic CLT-based tradition, we obtain universal tests and procedures whose form is invariant to the distributions of the arms. Another distinction is that  the aforementioned work develops and realizes an asymptotic notion of optimality---multi-armed log-optimality---which characterizes the optimal asymptotic accumulation rate of evidence against the null. In contrast, our proposed allocation rule targets signal-to-noise in a finite experiment, and achieves a logarithmic finite-sample regret bound.

\subsubsection{Statistical Inference for Multi-Armed Bandits}

 A parallel literature studies valid inference after adaptive allocation in stochastic bandits, often seeking to balance regret minimization with inferential guarantees \citep{simchilevi2023multi,hadad2021confidence,chen2021statistical,bibaut2021post,liang2023experimental,howard2022sequential,chen2023optimal}.\footnote{For a comprehensive overview of inference after adaptive experimentation, we refer the reader to \citet{bibaut2025demystifying}.} Our approach differs in two respects: we recast optimal experimental design itself as an online bandit problem whose objective is signal-to-noise rather than mean response, and we focus on a threshold-crossing null. Taken together, our results suggest that threshold-type hypotheses are substantially easier to test than best-arm hypotheses, in the sense that nearly unconstrained adaptive sampling is compatible with sharp inference.

Beyond these connections, our work also contributes to the theory of time-uniform inference. Motivated by bandit problems in which the number of arms may be large relative to the sample size, we extend the asymptotic confidence sequences of \citet{waudbysmith2024time} and \citet{bibaut2024near} to accommodate simultaneous monitoring across many adaptively sampled processes. In particular, we establish a time-uniform moderate deviations principle for the sequential $t$-statistic, which provides a sufficient approximation for valid multiple testing in settings where the number of hypotheses grows rapidly with the horizon. The refinement leads to a procedure that is robust under strategic sampling while retaining power in many-armed bandit designs.
\subsection{Outline of the paper}

The remainder of the paper is organized as follows. Section~2 introduces the setup and notation for our class of demonstration experiments, framing the problem in terms of multi-armed bandits with sub-Gaussian outcomes. Section~3 develops two test statistics---the pooled and max statistics---that remain valid under strategic sampling and, in some cases, early stopping. Section~4 presents the SN-UCB algorithm and analyzes its performance, showing how adaptive allocation can enhance power while controlling type~I error. Section~5 reports Monte Carlo simulations that illustrate the finite-sample properties of our methods and highlight their practical benefits. Section~6 concludes with a discussion of extensions and open directions for future research.

\section{Setup and Notation}
\label{sec:setup}

We consider a general experimental framework in which an experimenter sequentially allocates samples across a finite set of arms. Each arm may correspond to a treatment variant, a mutually exclusive subpopulation, or another experimental option of interest. At each round, the experimenter selects one arm and observes a noisy outcome. The objective is not to estimate all arm means precisely, nor to identify the single best arm, but rather to test whether any arm has a mean exceeding a given benchmark.

Formally, this corresponds to a standard multi-armed bandit setting with $k$ arms indexed by $g \in \{1, \ldots, k\}$ (see e.g.~\citealp{bubeck2012regret}), which we make explicit in \Cref{assn:dgm}.

\begin{assumption}[Data generating process] \label{assn:dgm}
	We consider a multi-armed bandit with $k$ arms: at each time  $t=1,\dots,T$ the experimenter selects an arm $g_t$ and observes the corresponding entry $X_{g_t}(t)$ of the potential outcomes vector $X(t)$. The vectors $X(t)$ are independently and identically distributed; its marginals $F_g$ have mean $\mu_g$ and variance $\sigma_g^2$.
\end{assumption}

We denote by $N_{g}(t)$ the number of times arm $g$ has been drawn up to and including round $t$.
We also let $T_g \subset [T]$ be the set of rounds $t$ for which $g_t = g$, and write $N_g = N_g(T) = \#T_g$ to denote the total number of times arm $g$ was chosen in the $T$ rounds.

The information available to the researcher at round $t$ consists of both the sequence of arms chosen up to time $t$ and the resulting data: $g_1,$ $X_{g_1}(1),$ $g_2,$ $X_{g_2}(2)$, and so on. 
Correspondingly, we define the $\sigma$-algebra generated at time $t$, summarizing all available information for the researcher up to that round:
\[
	\mathcal{F}_t = \sigma(\{g_i, X_{g_i}(i) : 1 \le i \le t\}).
\]
We require that $g_t$ is $\mathcal{F}_{t-1}$-measurable, for all $t \ge 1$.

In this paper, we are particularly interested in testing the null hypothesis that the mean of each arm $g$ falls below a corresponding, fixed threshold $u_g \in \mathbb{R}$. Comparison to a fixed threshold is natural when the status quo is measured precisely enough to be treated as known: in modern experimental settings, exposing units to unproven treatments, rather than measuring the incumbent, is often the binding cost. In this way, $u_g$ plays the role of a known control counterfactual. 

\begin{definition} \label{def:null}
The null and alternative hypotheses, denoted $\mathcal{H}_0(u)$ and $\mathcal{H}_1(u)$ for $u \in \mathbb{R}^k$, are as follows:
\[
	\mathcal{H}_0(u) = \left\{\max_{1 \le g \le k} \mu_g - u_g \le 0\right\},
	\quad 
    \text{and}
    \quad
	\mathcal{H}_1(u) =\left\{\max_{1 \le g \le k} \mu_g - u_g > 0\right\}.
\]
When $u = (0,\ldots,0)$, we simply write $\mathcal{H}_0$ or $\mathcal{H}_1$.
\end{definition}

Finally, we state two assumptions which will be maintained throughout the paper. \Cref{assn:sample-once}  concerns the sampling strategy, while \Cref{assn:sub-g} concerns the distribution of rewards. 
\begin{assumption}\label{assn:sample-once}
    The sampling strategy selects each arm twice at the outset, i.e.,~$N_g(2k) = 2$ for all arms $1 \le g \le k$.
\end{assumption}
This ensures that the arm-wise sample mean and variance is well defined.
\begin{assumption}\label{assn:sub-g}
    Each arm's distribution is sub-Gaussian with parameter $\nu \sigma_g$: for some $\nu \ge 1$, all $1 \le g \le k$, and all $s \in \mathbb{R}$, $\log\mathbb{E}\exp\{s(X_g(1)- \mu_g)\} \le s^2\nu^2\sigma^2_g/2$.
\end{assumption}
Note that Assumption \ref{assn:sub-g} is satisfied if (i) outcomes under each arm are bounded with variance bounded away from zero, or  (ii) if they are normally distributed. Without loss of generality, we take $\nu \ge 1$; $\nu$ appears in our bounds via a multiplicative constant, with polynomial dependence.

\section{Statistics that are robust to strategic sampling}
\label{sec:statistic}

In this section, we propose two test statistics which allow us to test the null hypothesis $\mathcal{H}_0(u)$ introduced in \Cref{def:null} at any pre-specified level $\alpha \in (0,1)$ for any strategic sampling algorithm satisfying \Cref{assn:sample-once}.
That is, when $\mathcal{H}_0(u)$ holds, the probability of rejection is at most $\alpha$. Note that since the parameter $u$ is up to the researcher, the problem can be reduced to that of testing $\mathcal{H}_0 = \mathcal{H}_0(0)$ by replacing each draw $X_g(t)$ by $X_g(t) - u_g$, preserving each arm's variance. Thus, without loss of generality, we focus the remainder of the discussion on $\mathcal{H}_0$.


\subsection{Pooled Testing} \label{sec:pooled}
The first statistic we consider works by pooling together information from all arms.
Before formally introducing it in \Cref{sec:pooled_defn}, we build intuition by considering the following infeasible statistic:
\[
	H_T =
		\frac{1}{\sqrt{T}} \sum_{t = 1}^T \frac{X_{g_t}(t)}{\sigma_{g_t}}
	 .
\]
Here, $H_T$ is infeasible because it depends on the unknown quantity $\sigma_{g_t}^2$.
It can be seen as a weighted average of standardized sums for each arm, where the weights correspond to the relative number of times each arm has been sampled.\footnote{See \cref{eq:weighted-average-of-t-stats} below.}

Under the two-sided null that $\mu_g=0$ for all $1 \le g \le k$, one can show that the limiting distribution of $H_T$ does not depend on the sampling strategy, so that tests based upon $H_T$ have asymptotically correct size.
The intuitive reason for constructing a test based upon $H_T$ is that under $\mathcal{H}_0$, the sample sums produced by any bandit algorithm cannot be positive in expectation: no sampling strategy can ``win against'' the null.
We formalize this intuition in \Cref{lemma:martingale}.
\begin{lemma} \label{lemma:martingale}
Under the null hypothesis $\mathcal{H}_0$, $\sqrt{t}H_t$ is a supermartingale adapted to $\mathcal{F}_t$. That is, for any $t \ge 1$ and $v \le t$ it holds
\[
	\mathbb{E}[\sqrt{t}H_t \mid \mathcal{F}_{v}] \le \sqrt{v}H_{v}.
\]
\end{lemma}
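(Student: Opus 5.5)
Write $S_t = \sqrt{t}H_t = \sum_{s=1}^t X_{g_s}(s)/\sigma_{g_s}$, so that the increments are $D_s = S_s - S_{s-1} = X_{g_s}(s)/\sigma_{g_s}$, with $S_0 = 0$. The plan is to verify the one-step supermartingale property $\mathbb{E}[D_s \mid \mathcal{F}_{s-1}] \le 0$ for every $s \ge 1$. The multi-step statement for $v \le t$ then follows from the tower property: write $S_t = S_v + \sum_{s=v+1}^t D_s$, condition each $D_s$ first on $\mathcal{F}_{s-1} \supseteq \mathcal{F}_v$, and then on $\mathcal{F}_v$. Integrability is not an issue, since $|D_s| \le \sum_{g} |X_g(s)|/\sigma_g$ and each $X_g(s)$ has finite variance by \Cref{assn:dgm}.

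The key step is computing $\mathbb{E}[D_s \mid \mathcal{F}_{s-1}]$. Since $g_s$ is $\mathcal{F}_{s-1}$-measurable and takes finitely many values, decompose
\[
D_s = \sum_{g=1}^k \mathbbm{1}\{g_s = g\}\, X_g(s)/\sigma_g .
\]
Each indicator can be pulled out of the conditional expectation, which leaves $\mathbb{E}[X_g(s)\mid\mathcal{F}_{s-1}]$ to compute. Now $\mathcal{F}_{s-1}$ is generated by the arm choices $g_1,\dots,g_{s-1}$ and the observations $X_{g_i}(i)$ for $i<s$. These are measurable functions of $X(1),\dots,X(s-1)$, possibly together with any external randomization used by the sampling rule, which we take to be independent of the outcome vectors. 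By \Cref{assn:dgm}, $X(s)$ is independent of $(X(1),\dots,X(s-1))$ and of that randomization, so it is independent of $\mathcal{F}_{s-1}$. Hence $\mathbb{E}[X_g(s)\mid \mathcal{F}_{s-1}] = \mu_g$, and
\[
\mathbb{E}[D_s \mid \mathcal{F}_{s-1}] = \sum_g \mathbbm{1}\{g_s=g\}\,\mu_g/\sigma_g = \mu_{g_s}/\sigma_{g_s}.
\]
Under $\mathcal{H}_0$ we have $\mu_g \le 0$ for all $g$, and $\sigma_g>0$, so this quantity is at most $0$.

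The only point needing care is the independence of $X(s)$ from $\mathcal{F}_{s-1}$, in particular if the algorithm uses auxiliary randomization. I would state explicitly that any such randomization is independent of the potential-outcome sequence; equivalently, one can enlarge the filtration to include it and note that the supermartingale property with respect to the larger filtration implies it with respect to $\mathcal{F}_t$. With that in place, the rest of the argument is routine.
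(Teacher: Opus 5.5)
Your proposal is correct and follows the paper's proof. Both arguments reduce to the one-step bound $\mathbb{E}[X_{g_s}(s)/\sigma_{g_s}\mid\mathcal{F}_{s-1}] = \mu_{g_s}/\sigma_{g_s}\le 0$, which uses predictability of $g_s$, and then iterate it via the tower property. The only difference is cosmetic: the paper justifies the conditional mean by citing its i.i.d.\ representation lemma (\Cref{lemma:iid-representation}), and that lemma's proof is exactly your direct argument that $X(s)$ is independent of $\mathcal{F}_{s-1}$.
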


An important aspect of $H_T$ is its random normalization by $\sigma_{g_t}$ (which is random because the choice of $g_t$ depends on previously observed data). This ensures that the limit distribution is pivotal: it does not depend on the unknown distributions of the arms, or on the sampling strategy chosen by the experimenter. In contrast, the quadratic variation of the usual sum $S_T = \sum_{t=1}^T X_{g_t}(t)$ can strongly depend on the sampling strategy, so that its limit distribution may be a nontrivial mixture distribution \citep{hall2014martingale}.

Focusing on $H_T$ instead of $S_T$ simplifies the Gaussian approximation problem on a technical level (see e.g. \citealt{mourrat2013rate, fan2013cramer}). As a consequence, our analysis allows arbitrary bandit algorithms\textemdash even those whose behavior may depend on the sample budget, $T$. In contrast, many statistics proposed for best arm identification place strong regularity conditions on the sampling strategy to ensure that the quadratic variation is stable, and rule out standard UCB policies (a detailed discussion is provided by \citealp{hadad2021confidence}).\footnote{This is because asymptotic inference on the mean of any individual arm requires that it is sampled sufficiently many times, and best arm identification requires simultaneous inference on the means of all arms.
}

Another heuristic benefit of $H_T$ is that, if the experimenter's goal is to reject the null by having $H_T$ exceed a pre-specified threshold, it should be optimal to sample the arm $g$ with the maximum value of $z_g = \mu_g/\sigma_g$ as frequently as possible. This naturally corresponds to an optimization problem with bandit feedback, which we formalize and study in Section \ref{sec:ucb}.
Our results imply that the (asymptotic) distribution of $H_T$ under any sampling scheme decomposes as a standard normal plus a predictable drift given by
\begin{equation}\sqrt{T} \sum_{g=1}^k \omega_gz_g; \qquad \omega_g \coloneqq\frac{N_g(T)}{T}. \label{eq:weighted-average-of-t-stats}
\end{equation}
In other words it is proportional to a weighted sum, where the weights $\omega_g$ reflect the fraction of samples allocated to the arm $g$. Thus, optimizing power amounts to a $k$-arm bandit problem where the rewards are given by the signal-to-noise ratios $z_g$.

\subsubsection{Feasible statistics and regularized variance estimates}\label{sec:pooled_defn}

Since the arm variances $\sigma_g^2$ are unknown, any feasible analogue of $H_T$ must estimate them from data. When an arm has received few samples---a necessary feature of adaptive experiments---the naive plug-in can be unstable, and some form of regularization is needed. Let
\[
\hat\sigma_g^2
= \frac{1}{N_g} \sum_{t \in T_g} \{X_{g_t}(t) - \hat\mu_g\}^2,
\qquad
\hat\mu_g = \frac{1}{N_g} \sum_{t \in T_g} X_{g_t}(t),
\]
and consider the two-parameter family of regularized estimators
\[
\tilde\sigma_g(\lambda,\rho)
= \hat\sigma_g + \frac{\lambda}{\sqrt{N_g}}\,\mathbf{1}\{N_g \le \rho\}.
\]
The associated pooled statistic and its centered version are
\[
\tilde H_T(\lambda,\rho)
= \frac{1}{\sqrt{T}} \sum_{t=1}^T \frac{X_{g_t}(t)}{\tilde\sigma_{g_t}(\lambda,\rho)},
\qquad
\tilde H'_T(\lambda,\rho)
= \frac{1}{\sqrt{T}} \sum_{t=1}^T
\frac{X_{g_t}(t) - \mu_{g_t}}{\tilde\sigma_{g_t}(\lambda,\rho)} .
\]

Stabilizing the pooled statistic under adaptive allocation presents several competing challenges. Arms may receive only a small number of samples, making variance estimates highly sensitive to sampling error; variance estimates that are spuriously small can produce highly unstable estimates of the arm-wise mean. Conversely, inflating the variance estimates may severely reduce power when an arm truly has low variance and high signal-to-noise ratio. Because sampling itself is data dependent, balancing these effects is theoretically delicate. We therefore focus on two limiting regularization regimes that address this tradeoff in different ways.

Rather than carrying $(\lambda,\rho)$ throughout the notation, we define two derived
statistics corresponding to padding-based and threshold-based regularization:
\[
\hat\sigma^{\mathrm{pad}}_g := \tilde\sigma_g(\lambda_{k,T},\infty),
\qquad
\hat\sigma^{\mathrm{thr}}_g := \tilde\sigma_g(\infty,\rho_{k,T}),
\]
with pooled statistics $\hat H_T^{\mathrm{pad}}$, $\hat H_T^{\mathrm{thr}}$ and centered counterparts
$\hat H_T^{\mathrm{pad}\,'}$, $\hat H_T^{\mathrm{thr}\,'}$ defined analogously.
The two regularization schemes stabilize the pooled statistic in different ways. Padding regularization (controlled by $\lambda$) acts by enlarging the denominator, inflating variance estimates when sample sizes are small. Threshold regularization (controlled by $\rho$) instead resembles a trimming rule, restricting attention to arms with sufficiently many samples to permit reliable Studentization. The appendix develops a simultaneous analysis covering both regimes, but does not eliminate the tension between their respective guarantees.

\begin{theorem}[CLT for padding-regularized pooled statistic]\label{thm:pooled-pad}
Suppose Assumptions~\ref{assn:sample-once} and~\ref{assn:sub-g} hold
and take $\lambda_{k,T}=\sqrt{\log(kT)}$.
Under $H_0$, the statistic $\hat H_T^{\mathrm{pad}}$ is stochastically dominated by
its centered counterpart $\hat H_T^{\mathrm{pad}\,'}$, which satisfies
\begin{equation}\label{eq:feasible-clt}
\sup_{u\in\mathbb R}
\left|
\mathbb P\{\hat H_T^{\mathrm{pad}\,'}\le u\}-\Phi(u)
\right|
\le
C_\nu
\frac{\log^{3/2}(kT)}{\sqrt T}
\sum_{g=1}^{k}\bigl(\sigma_g\vee\sigma_g^{-1}\bigr).
\end{equation}
\end{theorem}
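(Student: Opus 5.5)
The plan has three steps: a pointwise domination, a martingale Berry--Esseen bound for the infeasible statistic, and a high-probability perturbation bound comparing the feasible statistic to the infeasible one.

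\textbf{Domination.} Write $X_{g_t}(t)=\{X_{g_t}(t)-\mu_{g_t}\}+\mu_{g_t}$. Under $\mathcal H_0$ we have $\mu_{g}\le 0$ for every $g$, and $\hat\sigma^{\mathrm{pad}}_g>0$ because of the padding term $\lambda_{k,T}/\sqrt{N_g}$. Hence each summand of $\hat H_T^{\mathrm{pad}}$ is at most the corresponding summand of $\hat H_T^{\mathrm{pad}\,'}$, so $\hat H_T^{\mathrm{pad}}\le \hat H_T^{\mathrm{pad}\,'}$ pointwise. This gives the stochastic dominance, and the remaining work is the Kolmogorov bound \eqref{eq:feasible-clt} for the centered statistic.

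\textbf{Infeasible statistic.} Let $H'_T=T^{-1/2}\sum_t \{X_{g_t}(t)-\mu_{g_t}\}/\sigma_{g_t}$. Since $g_t$ is $\mathcal F_{t-1}$-measurable and the $X(t)$ are i.i.d., this is a martingale whose increments have conditional variance exactly $1/T$, whatever the sampling rule. This is the point made after \Cref{lemma:martingale}. Because the normalized predictable quadratic variation equals $1$ deterministically, a martingale Berry--Esseen bound of Bolthausen/Mourrat type (e.g.\ \citealt{mourrat2013rate, fan2013cramer}) applies. Combined with the bounded conditional third moments of the standardized increments, which follow from \Cref{assn:sub-g} and are $\lesssim\nu^3$, it should give
\[
\sup_u|\mathbb P\{H'_T\le u\}-\Phi(u)|\le C_\nu \log T/\sqrt T .
\]
This is already within the target rate.

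\textbf{Perturbation.} I would decompose arm-wise, with $S_g(n)$ the centered sum of the first $n$ draws of arm $g$:
\[
\hat H_T^{\mathrm{pad}\,'}-H'_T=\frac{1}{\sqrt T}\sum_{g=1}^k S_g(N_g)\Bigl(\frac{1}{\hat\sigma^{\mathrm{pad}}_g}-\frac{1}{\sigma_g}\Bigr).
\]
Set $L=\log(kT)$. On an event $E$ I would impose, for all arms $g$ and all $2\le n\le T$ simultaneously:
\begin{itemize}
\item $|S_g(n)|\le C\nu\sigma_g\sqrt{nL}$;
\item $|\hat\sigma_g(n)-\sigma_g|\le C\nu^2(\sigma_g\vee 1)\sqrt{L/n}$.
\end{itemize}
Sub-Gaussian maximal or union bounds over $kT$ pairs $(g,n)$ should give $\mathbb P(E^c)\le 1/T$. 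The bounds are stated for every $n$, rather than at the random $N_g$, so that the adaptive choice of $N_g$ is harmless.

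On $E$, the padding $\sqrt{L/n}$ should give $\hat\sigma^{\mathrm{pad}}_g\gtrsim_\nu \sigma_g\wedge 1$. In the term $|1/\hat\sigma^{\mathrm{pad}}_g-1/\sigma_g|=|\hat\sigma^{\mathrm{pad}}_g-\sigma_g|/(\hat\sigma^{\mathrm{pad}}_g\sigma_g)$, the numerator is $\lesssim (\sigma_g\vee 1)\sqrt{L/n}$. Multiplying by $|S_g|/\sqrt T\lesssim \sigma_g\sqrt{nL/T}$ yields a per-arm contribution of order $C_\nu L(\sigma_g\vee\sigma_g^{-1})/\sqrt T$. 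Summing over arms gives $|\hat H_T^{\mathrm{pad}\,'}-H'_T|\le\delta$ with $\delta=C_\nu L\sqrt{L}\,T^{-1/2}\sum_g(\sigma_g\vee\sigma_g^{-1})$. I allow one extra $\sqrt L$ for slack in the constants, which should recover the stated $\log^{3/2}$ rate.

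Finally, sandwich:
\[
\mathbb P\{H'_T\le u-\delta\}-\mathbb P(E^c)\le\mathbb P\{\hat H_T^{\mathrm{pad}\,'}\le u\}\le \mathbb P\{H'_T\le u+\delta\}+\mathbb P(E^c).
\]
Then apply the Berry--Esseen bound for $H'_T$ and the Lipschitz property of $\Phi$, with $\|\Phi'\|_\infty\le 1/\sqrt{2\pi}$.

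\textbf{Main obstacle.} I expect the hardest step to be the uniform lower bound on $\hat\sigma^{\mathrm{pad}}_g$ for small $n$. There $\hat\sigma_g$ may collapse toward $0$, so the padding must dominate. When $\sigma_g$ is large, the deviation $\sqrt{L/n}\,\sigma_g$ can exceed the padding $\sqrt{L/n}$, and I would use $\hat\sigma_g\ge0$ together with separate case handling. Tracking these regimes is what produces the $\sigma_g\vee\sigma_g^{-1}$ factor. I would first split into $n\ge C\nu^4 L$, where $\hat\sigma_g\ge\sigma_g/2$, and $n< C\nu^4L$, where the padding alone gives $\hat\sigma^{\mathrm{pad}}_g\ge \sqrt{L/n}\gtrsim \nu^{-2}$.
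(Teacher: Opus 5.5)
Your proposal is correct and follows essentially the same route as the paper. Both arguments use pointwise domination from $\mu_g\le 0$, and a martingale Berry--Esseen bound for $H'_T$ that exploits the exactly unit predictable variance (the paper invokes Fan et al.\ under a Bernstein condition derived from sub-Gaussianity). Both then couple $\hat H_T^{\mathrm{pad}\,'}$ to $H'_T$ arm by arm on a concentration event that is uniform over prefix lengths, with a case split at $N_g\asymp\nu^4\log(kT)$, and finish with a Kolmogorov-distance transfer.

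Two small remarks. First, state your variance event in relative form, $|\hat\sigma_g(n)-\sigma_g|\le C\nu^2\sigma_g\sqrt{L/n}$, which does hold uniformly in $n\le T$ by Bernstein's inequality plus a union bound. The $(\sigma_g\vee 1)$ version you wrote does not give $\hat\sigma_g\ge\sigma_g/2$ for $n\ge C\nu^4L$ when $\sigma_g<1$. Second, the relative bound is sharper than the paper's variance concentration, which carries a cruder $\Delta_1\asymp\log(kT)$ term. With it your argument actually yields $\log(kT)$ in place of $\log^{3/2}(kT)$, so the extra $\sqrt{L}$ you allowed is not needed.
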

In particular, the above regularization depends only on $(k,T)$ and yields a completely tuning-free implementation. In contrast, the following gives a better theoretical approximation, but requires a known bound on $\nu$.
\begin{theorem}[CLT for threshold-regularized pooled statistic]\label{thm:pooled-thr}
Suppose Assumptions~\ref{assn:sample-once} and~\ref{assn:sub-g} hold
and take $\rho_{k,T}=C_\nu\log(kT)$.
Then under $\mathcal{H}_0$,
\[
\sup_{u\in\mathbb R}
\left|
\mathbb P\{\hat H_T^{\mathrm{thr}\,'}\le u\}-\Phi(u)
\right|
\le
C_\nu
\frac{k\log^{3/2}(kT)}{\sqrt T}.
\]
\end{theorem}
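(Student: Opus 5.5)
The plan is to compare $\hat H_T^{\mathrm{thr}\,'}$ with an oracle martingale whose Gaussian approximation does not depend on the sampling rule, and then to control the error from the variance estimates. Write $\hat\sigma_g(t)$ for the threshold-regularized estimate built from data up to round $t$, and let $\tau_g$ be the round at which arm $g$ is drawn for the $(\rho_{k,T}+1)$-th time. Until then $\hat\sigma^{\mathrm{thr}}_g=\infty$, so those summands vanish and only post-threshold draws contribute. One caveat: the statistic as defined uses the terminal estimate $\hat\sigma^{\mathrm{thr}}_{g}$, computed from all $N_g$ samples. That denominator is not predictable. I would therefore first replace it by the predictable estimate $\hat\sigma_{g_t}(t-1)$ and bound the discrepancy separately.

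\textbf{Step 1 (oracle CLT).} Define
\[
M_T=\frac1{\sqrt T}\sum_{t}\frac{X_{g_t}(t)-\mu_{g_t}}{\sigma_{g_t}}\,\mathbf 1\{N_{g_t}(t-1)\ge\rho\}.
\]
This is a martingale with bounded conditional variances, but its conditional variance sum $V_T=\#\{\text{post-threshold draws}\}/T$ is random. Pre-threshold draws number at most $k\rho$, so $|V_T-1|\le k\rho/T$. I would pad $M_T$ with at most $k\rho$ independent standard normal increments to make the quadratic variation exactly one. Applying a Berry--Esseen bound for martingales with sub-Gaussian increments and deterministic total conditional variance (Bolthausen; Mourrat; \citealp{fan2013cramer}) then gives an error of order $\log^{1/2}(T)/\sqrt T$. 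The padding itself perturbs the statistic by $O_p(\sqrt{k\rho/T})$.

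That padding error is too large if handled crudely. I would instead bound the Kolmogorov distance through an anti-concentration argument: on an event of probability at least $1-\delta$, the perturbation is at most $\sqrt{k\rho\log(1/\delta)/T}$. This contributes only $\sqrt{k}\log(kT)/\sqrt T$, which is within the stated bound.

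\textbf{Step 2 (variance estimation).} Sub-Gaussian concentration, made uniform over arms and sample sizes by a union bound over $kT$ pairs, gives with probability at least $1-1/T$
\[
|\hat\sigma_g(n)/\sigma_g-1|\le C_\nu\sqrt{\log(kT)/n}\quad\text{for all } n\ge\rho .
\]
Choosing $\rho=C_\nu\log(kT)$ with $C_\nu$ large keeps this ratio in $[1/2,2]$. This is exactly where a known bound on $\nu$ is needed. The difference between $\hat H'$ and $M_T$ is then
\[
\frac1{\sqrt T}\sum_g\sum_{t\in T_g,\,t>\tau_g}(X_t-\mu_g)\Bigl(\frac1{\hat\sigma_g}-\frac1{\sigma_g}\Bigr).
\]
If the predictable estimate $\hat\sigma_g(t-1)$ is used, this is again a martingale, with conditional variance $\sum_g\sum_n \log(kT)/(nT)\lesssim k\log^2(kT)/T$. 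A Freedman inequality then gives a perturbation of order $\sqrt{k}\log^{3/2}(kT)/\sqrt T$ with high probability, and by anti-concentration of $\Phi$ this transfers to the Kolmogorov distance.

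\textbf{Step 3 (terminal estimate).} With the terminal estimate $\hat\sigma_g(N_g)$, I would write the arm-$g$ term as $S_g/(\sqrt T\hat\sigma_g)$ with $S_g=\sum_{t\in T_g}(X_t-\mu_g)$. Then
\[
\Bigl|\frac{S_g}{\sqrt T}\Bigl(\frac1{\hat\sigma_g}-\frac1{\sigma_g}\Bigr)\Bigr|\lesssim \frac{\sqrt{N_g\log(kT)}}{\sqrt T}\cdot\frac{\sqrt{\log(kT)}}{\sigma_g\sqrt{N_g}}\cdot\sigma_g=\frac{\log(kT)}{\sqrt T},
\]
uniformly in $N_g$, via time-uniform sub-Gaussian bounds on $S_g$ (a union bound over $n\le T$). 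Summing over $k$ arms gives $k\log(kT)/\sqrt T$. This is the step that produces the linear factor in $k$ and the extra powers of the logarithm, explaining the $k\log^{3/2}(kT)/\sqrt T$ rate. The stochastic dominance direction is not needed for this theorem, since under $\mathcal H_0$ it would follow anyway from $\mu_g\le0$.

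\textbf{Main obstacle.} The delicate point is Step 1: the statistic must be normalized by a random number of effective samples, and that normalization must stay pivotal under an arbitrary adaptive sampling rule. I expect the deterministic-variance embedding (padding with Gaussians, or a stopping time at which accumulated variance reaches one) combined with the martingale Berry--Esseen bound to carry the argument. Keeping the remainder at order $k/\sqrt T$ rather than $\sqrt{k/T}$ is the part most likely to need care.
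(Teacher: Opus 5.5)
The estimate that actually carries your proof, Step~3, is the paper's argument. Proposition~\ref{prop:feasible-coupling} writes the coupling error arm by arm as $T^{-1/2}\sum_g |S_g/\sigma_g|\cdot|\sigma_g-\hat\sigma^{\mathrm{thr}}_g|/\hat\sigma^{\mathrm{thr}}_g$. It controls both factors at the random index $N_g$ by union bounds over prefixes of the i.i.d.\ arm streams (Lemma~\ref{lemma:iid-representation}), and then sums the $k$ per-arm bounds, which is where the linear factor in $k$ comes from.

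Where you differ is the oracle. The paper couples directly to the untruncated infeasible statistic $H'_T=T^{-1/2}\sum_{t\le T}(X_{g_t}(t)-\mu_{g_t})/\sigma_{g_t}$. Its conditional variance is exactly $1/T$ at every round under any sampling rule, so Corollary~\ref{cor:subg-martingale-clt} applies with $\delta=0$ and gives $C_\nu\log T/\sqrt T$ at once. Arms with $N_g\le\rho$, whose feasible summands vanish, then cost only $O(\sqrt{\rho/T}\,\nu\sqrt{\log(kT)})$ each, and Lemma~\ref{lemma:combine-coupling-with-kolmogorov} transfers the CLT. So the random quadratic variation, Gaussian padding, predictable variance estimates and Freedman step in your Steps~1--2 are unnecessary. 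The ``main obstacle'' you identify comes from your choice of a thresholded oracle, not from the problem.

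Two points in your write-up need correcting.
\begin{itemize}
\item $\hat\sigma^{\mathrm{thr}}_g=\tilde\sigma_g(\infty,\rho)$ depends on the terminal count $N_g=N_g(T)$, so thresholding acts on whole arms at the end of the experiment. An arm with $N_g>\rho$ contributes all of its draws, including the first $\rho$, each divided by the full-sample $\hat\sigma_g$. It is not the case that pre-threshold summands vanish.
\item Your chain of approximations does not close as written. You announce that you will pass from the terminal to the predictable estimate, but Step~3 instead compares the terminal statistic with $\sum_{g:N_g>\rho}S_g/(\sqrt T\sigma_g)$. That quantity differs from your $M_T$ by the first $\rho$ standardized draws of every surviving arm. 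The link is easy: it is $O(k\sqrt{\rho\log(kT)/T})$ with high probability by the same prefix bounds. Once you supply it, however, Step~2 plays no role.
\end{itemize}

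Two smaller remarks. First, for $n\ge\rho$ your bound $|\hat\sigma_g(n)/\sigma_g-1|\lesssim\nu^2\sqrt{\log(kT)/n}$ is legitimately sharper than Lemma~\ref{lem:consistent-estimator-variance}, whose $\log(1/\eta)/\sqrt{N_g}$ term comes from uniformity over all $q\ge1$. Your Step~3 therefore yields order $k\log(kT)/\sqrt T$, and the $\log^{3/2}$ in the theorem reflects the paper's bookkeeping rather than anything intrinsic. Second, martingale Berry--Esseen bounds of the Fan et al.\ type used in Corollary~\ref{cor:subg-martingale-clt} give $\log T/\sqrt T$, not $\log^{1/2}T/\sqrt T$, and Bolthausen's $\log T/\sqrt T$ rate is sharp in general. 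This does not affect the final rate.
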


\noindent\emph{Proof of Theorems~\ref{thm:pooled-pad} and~\ref{thm:pooled-thr}.}
See Appendix~\ref{sec:proof-pooled}. \hfill$\square$

\medskip

These finite-sample approximation results imply asymptotic validity of pooled testing in sequences of experiments with a growing number of arms: the natural test $\{\hat H_T > c_\alpha\}$ controls size at level $\alpha + o(1)$, as formalized in the corollary below.

\begin{corollary}[Asymptotic validity of pooled testing]\label{cor:pooled-validity}
Suppose Assumptions~\ref{assn:sample-once}--\ref{assn:sub-g} hold and consider
a sequence of experiments for which
$
\bigl[\sum_{g\le k}(\sigma_g\vee\sigma_g^{-1})\bigr]^2\log^3(kT)/T\to0
$
as $T \to \infty$.
Let $c_\alpha$ denote the $(1-\alpha)$-quantile of the standard
normal distribution.
Then, under $H_0$,
\[
\limsup_{T\to\infty}
\mathbb P\{\hat H_T^{\mathrm{pad}}>c_\alpha\}
\le\alpha,
\qquad
\limsup_{T\to\infty}
\mathbb P\{\hat H_T^{\mathrm{thr}}>c_\alpha\}
\le\alpha.
\]
Moreover, under the two-sided null hypothesis that $\mu_g = 0$ for all $g \in \{1, \ldots, k\}$, this claim can be strengthened to
\[
\lim_{T\to\infty} \mathbb P\{\hat H_T^{\mathrm{pad}} > c_\alpha\}
= \lim_{T\to\infty} \mathbb P\{\hat H_T^{\mathrm{thr}} > c_\alpha\}
= \alpha.
\]
In this sense, the pooled test is non-conservative.
\end{corollary}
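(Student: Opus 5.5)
The plan is to derive the corollary directly from Theorems~\ref{thm:pooled-pad} and~\ref{thm:pooled-thr}: the rate condition makes both Berry--Esseen bounds vanish, and stochastic dominance then converts the CLT for the centered statistics into size control for the uncentered ones.

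\textbf{Step 1: the centered statistics are asymptotically normal.} Write $B_T := \sum_{g\le k}(\sigma_g\vee\sigma_g^{-1})$. Since each term $\sigma_g\vee\sigma_g^{-1}\ge 1$, we have $B_T\ge k$. The right-hand side of \eqref{eq:feasible-clt} equals $C_\nu\bigl(B_T^2\log^3(kT)/T\bigr)^{1/2}$, which tends to $0$ by hypothesis. The bound in Theorem~\ref{thm:pooled-thr} is at most $C_\nu\bigl(k^2\log^3(kT)/T\bigr)^{1/2}$, and because $k\le B_T$ this also tends to $0$. Hence
\[
\sup_u\bigl|\mathbb P\{\hat H_T^{\mathrm{pad}\,'}\le u\}-\Phi(u)\bigr|\to0
\quad\text{and}\quad
\sup_u\bigl|\mathbb P\{\hat H_T^{\mathrm{thr}\,'}\le u\}-\Phi(u)\bigr|\to0 .
\]

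\textbf{Step 2: size control under $\mathcal H_0$.} The key observation is the pointwise inequality
\[
\hat H_T-\hat H_T' \;=\; T^{-1/2}\sum_t \mu_{g_t}/\hat\sigma_{g_t} \;\le\; 0 \qquad\text{under }\mathcal H_0,
\]
which holds because every $\mu_g\le 0$ and every regularized denominator is positive. For the threshold statistic I interpret $\hat\sigma^{\mathrm{thr}}_g=\infty$ as contributing $0$, and the inequality still holds. This gives $\hat H_T\le\hat H_T'$ almost surely for both regularizations. Therefore
\[
\mathbb P\{\hat H_T>c_\alpha\}\le \mathbb P\{\hat H_T'>c_\alpha\}=1-\Phi(c_\alpha)+o(1)=\alpha+o(1),
\]
and taking $\limsup$ yields the first display.

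\textbf{Step 3: exact asymptotic size under the two-sided null.} When $\mu_g=0$ for all $g$, we have $\hat H_T=\hat H_T'$ identically. Step 1 then gives $\mathbb P\{\hat H_T>c_\alpha\}\to 1-\Phi(c_\alpha)=\alpha$. This uses continuity of $\Phi$ at $c_\alpha$, which is automatic because the convergence in Step 1 is uniform.

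\textbf{Main obstacle.} The only delicate point is checking that the rate condition, which is stated in terms of $B_T$, also covers the factor $k$ in Theorem~\ref{thm:pooled-thr}; the inequality $B_T\ge k$ settles this. Beyond that, I would need to confirm that the theorems apply along the sequence with a constant $C_\nu$ that does not depend on $T$. Since $\nu$ is fixed along the sequence, this holds.
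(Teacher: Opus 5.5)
Your proposal is correct and follows the paper's proof essentially line for line: you use the pointwise domination $\hat H_T \le \hat H_T'$ under $\mathcal H_0$ (from $\mu_g\le 0$ and positive regularized denominators), the vanishing of the Kolmogorov bounds of Theorems~\ref{thm:pooled-pad} and~\ref{thm:pooled-thr} under the rate condition, and the identity $\hat H_T = \hat H_T'$ under the two-sided null. Your observation that $\sigma_g\vee\sigma_g^{-1}\ge 1$ gives $k\le\sum_{g}(\sigma_g\vee\sigma_g^{-1})$, so the stated rate condition also drives the $k\log^{3/2}(kT)/\sqrt T$ bound for the threshold statistic to zero, makes explicit a step the paper leaves under ``the analogous argument.''
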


The main challenge of Theorems~\ref{thm:pooled-pad} and~\ref{thm:pooled-thr} is showing that the cumulative error of approximating $\sigma_g$ by $\hat\sigma_g$ for all $g \in \{1,\ldots,k\}$ is negligible. To do so we must account for the fact that the infrequently sampled arms $g$, for which $\sigma_g$ is poorly estimated, do not contribute much to the sum in the definition of $\hat H_T$. This crucially exploits the difference between our problem and that of best-arm identification, where \emph{all} means $\mu_g$ must be estimated simultaneously.

\begin{remark}[Comparison of regularization strategies] \normalfont
Theorems~\ref{thm:pooled-pad} and~\ref{thm:pooled-thr} reflect two complementary approaches to stabilizing the pooled statistic. Padding preserves contributions from all arms and yields a tuning-free procedure whose implementation depends only on observable problem dimensions. Thresholding instead excludes arms with insufficient samples, leading to a sharper Gaussian approximation and, as illustrated in Section~\ref{sec:sims}, improved empirical power in several settings. These advantages come with opposing tradeoffs. Padding may be conservative when signal-to-noise ratios are large, whereas thresholding introduces an additional tuning parameter. Developing a regularization strategy that combines the tuning-free implementation of padding with the sharper guarantees of thresholding remains an interesting direction for future work.
\end{remark}


\subsection{Max Statistic}

As shown in \Cref{sec:pooled}, the pooled statistic allows sharp tests of the hypothesis that no arm's mean exceeds a given threshold. 
However, the pooled testing approach has a number of limitations. 
For one, although it anticipates strategic sampling, it does not support valid inference if a researcher wishes to terminate the experiment early. It also does not allow the researcher to test stronger hypotheses on the means of individual arms: for example, if one arm $g^*$ clearly outperforms the others, the pooled statistic will not allow the researcher to reject the more specific null hypothesis $\mathcal{H}_0^{(g^*)}=\{\mu_{g^\star} \le 0\}$.
In this case the pooled statistic may also be significantly smaller than the simple $t$-statistic corresponding to $g^*$, due to its inclusion of samples from the other arms.

To overcome these limitations, we describe an alternative class of statistics that jointly test the individual hypotheses $\mathcal{H}_0^{(g)}=\{\mu_g \le 0\}$, for $g=1,\dots, k $. 
We do so by creating a test that looks at the  $t$-statistic of each individual arms, restricting to those that have been sampled sufficiently many times. 
This approach addresses the limitations introduced above.  
In our analysis, treating each arm separately also results in a weaker restriction $k \ll T$ on the number of arms, up to logarithmic factors. This is nearly as good as possible under Assumption \ref{assn:sample-once}, which requires $k \le T/2$.

One drawback of this approach in comparison to pooled testing is that the resulting tests are conservative: under the null hypothesis, they reject with asymptotic probability strictly less than $\alpha$. 
Still, they appear to be more powerful in scenarios where one arm performs much better than the rest.
Moreover, we show theoretically that the price is not much greater than a necessary multiple hypothesis correction for testing the means of $k$ arms. 

In order to introduce our testing procedure, we define the Student's $t$-statistic corresponding to the arm $g$, evaluated after $q$ samples have been drawn from arm $g$. Let $T_{g,q} \subseteq T_g$ denote the set of the first $q$ rounds at which arm $g$ is sampled, i.e., the $q$ smallest elements of $T_g$. The arm-wise $t$-statistic is then
\begin{equation}\label{eq:def-arm-t-stat}
    \hat Z_g(q) \coloneqq
        \frac{{\sum_{s\in T_{g,q} } X_{g_s}(s)}}
        {\sqrt{\sum_{s \in T_{g,q}} [X_{g_s}(s) - \hat\mu_{g,q}]^2}},
\end{equation}
where $\hat \mu_{g,q} = q^{-1}\sum_{s \in T_{g,q}} X_{g_s}(s)$ is the empirical mean of the first $q$ samples from arm $g$. 
Note, further, that at the end of the experiment, at least one arm has been sampled at least $m = \lceil T/k \rceil$ times. In this setting, we can account for strategic allocation of samples to arms using a version of the invariance principle for the sequence $\{\hat Z_g(q)\}_{q \ge 1}$, building upon \citet{waudbysmith2024time} and \citet{bibaut2024near}.

The test we propose compares the maximum of the arm-wise $t$-statistics $\hat Z_g(N_g(t))$, taken across sufficiently sampled arms, to a Robbins-type time-uniform boundary. The remainder of this subsection develops the machinery this requires: \Cref{lem:robbins-boundary} states the boundary-crossing probabilities for a single Brownian path, and the moderate-deviations extension that follows is what enables simultaneous control across many arms.

In particular, extending the results of \citet{robbins1970boundary} for sample sums, \citet{waudbysmith2024time} provided a general framework for approximating the time-uniform rejection probabilities $\mathbb{P}\{ \max_{q \ge q_0}\sqrt{q}|\hat Z_g(q)| > c(q)\}$, $q_0 \ge 0$, by corresponding (pivotal) boundary crossing probabilities for Brownian motion. In our paper we focus on the following {one-sided} boundary crossing probabilities.
\begin{lemma}[{\citealp[Examples 2 and 3]{robbins1970boundary}}]\label{lem:robbins-boundary}
For $q \ge 1$, let $S_q = \sum_{i=1}^q X_i$ denote the $q^\th$ partial sum from an i.i.d.~sequence $(X_1, X_2, \ldots)$ with $\mathbb{E}[X_1]=0$ and $\mathbb{E}[X_1^2]=1$. Let $\Phi(x)$ denote the standard Gaussian CDF. Then,
\begin{equation}\label{eq:asymptotic-anytime-distribution-lin}
\lim_{q_0 \uparrow \infty} \mathbb{P}\left\{\max_{q \ge q_0} S_q > \frac{aq}{\sqrt{q_0}} \right\} = 2[1 - \Phi(a)].
\end{equation}
Moreover, if we put $h(x) = x^2 + 2\log\Phi(x)$ and $\Psi^+(a) = 1-\Phi(a) + \vp(a)[a + \vp(a)/\Phi(a)]$, then we also have
\begin{equation}\label{eq:asymptotic-anytime-distribution-log}
\lim_{q_0 \uparrow \infty} \mathbb{P}\left\{\max_{q \ge q_0} S_q > \sqrt{q}h^{-1}[\log(q/q_0) + h(a)]\right\} = \Psi^+(a).
\end{equation}
\end{lemma}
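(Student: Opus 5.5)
The plan is to reduce both limits to exact boundary-crossing computations for standard Brownian motion, using a functional CLT together with Brownian scaling. Set $W_n(s) = S_{\lfloor n s\rfloor}/\sqrt{n}$. Donsker's theorem gives $W_{q_0}\Rightarrow W$ on compact intervals $[1,M]$.

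In \eqref{eq:asymptotic-anytime-distribution-lin} the event becomes $\{\sup_{s\ge 1} W_{q_0}(s) > a s\}$. In \eqref{eq:asymptotic-anytime-distribution-log} it becomes $\{\sup_{s\ge1} W_{q_0}(s) > \sqrt{s}\,h^{-1}[\log s + h(a)]\}$. Both are crossing events for a scale-invariant boundary $\sqrt{q_0}\,b(q/q_0)$. It therefore suffices to prove two things:
\begin{enumerate}[label=(\roman*)]
\item the analogous Brownian probabilities $\mathbb P\{\exists s\ge1: W(s)>b(s)\}$ equal the stated constants;
\item the mapping from discrete to continuous is justified on the unbounded time set $[1,\infty)$.
\end{enumerate}

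For (i), the linear case is handled by conditioning on $W(1)=x\sim N(0,1)$. If $x>a$ the boundary is already crossed. If $x\le a$, the increment process must hit a line of slope $a$ starting from height $a-x$. The classical formula gives probability $\exp\{-2a(a-x)\}$. Integrating,
\[
1-\Phi(a)+\int_{-\infty}^a e^{-2a(a-x)}\varphi(x)\,dx = 1-\Phi(a) + \Phi(a)-\ldots,
\]
and completing the square gives $e^{-2a(a-x)}\varphi(x)=\varphi(x-2a)$. The integral is therefore $\Phi(-a)=1-\Phi(a)$, and the total is $2[1-\Phi(a)]$.

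The log case follows Robbins' method of mixtures. Consider the mixture martingale $M(s)=\int \exp\{\theta W(s)-\theta^2 s/2\}\,dF(\theta)$, with $F$ the standard normal restricted to $\theta\ge0$ and normalized by $\Phi$-type factors. Computing $M$ in closed form gives $M(s)=s^{-1/2}e^{w^2/2s}\Phi(w/\sqrt s)/\text{const}$ for $w=W(s)$. The level set $\{M(s)=c\}$ is exactly the curve $w=\sqrt{s}\,h^{-1}[\log s + \text{const}]$, since $h(x)=x^2+2\log\Phi(x)$. Starting at time $1$ with $W(1)=x$, the crossing probability from below is given by Ville's identity for continuous martingales, which is equality here by continuity: $\mathbb P\{\text{hit level } c\mid W(1)=x\}=M(1;x)/c$ for $x$ below the boundary. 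One then integrates over $x$: the part $x>a$ contributes $1-\Phi(a)$, and the part $x\le a$ contributes $\int_{-\infty}^a \varphi(x)\,e^{x^2/2}\Phi(x)/(e^{a^2/2}\Phi(a))\,dx$. That second integral reduces, via $\int_{-\infty}^a\Phi = a\Phi(a)+\varphi(a)$, to $\varphi(a)[a+\varphi(a)/\Phi(a)]$, which yields $\Psi^+(a)$. I will verify the normalizations carefully, but the structure is forced by the shape of $h$.

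For (ii), which I expect to be the main obstacle, weak convergence on $[1,M]$ handles crossings before time $Mq_0$. The remaining task is to show that $\limsup_{q_0}\mathbb P\{\exists q> Mq_0: S_q>\sqrt{q_0}\,b(q/q_0)\}\to0$ as $M\to\infty$. In the linear case, the boundary grows like $q$, so I will use Doob's maximal inequality on dyadic blocks $[2^jMq_0,2^{j+1}Mq_0]$ with only second moments. Each block contributes at most $O(1/(a^2 2^j M))$, which is summable and small. In the log case, the boundary grows like $\sqrt{q\log(q/q_0)}$, and second moments alone are too weak for a blockwise union bound. Here I will first try the Hájek–Rényi/Chow inequality with weights $c_q=\sqrt{q_0}\,b(q/q_0)$. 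This gives a bound $\sum_{q>Mq_0}c_q^{-2}\approx\sum 1/(q\log(q/q_0))$, and that sum diverges only barely. Failing that, I will use a strong approximation: a Komlós–Major–Tusnády/Strassen coupling with $|S_q-W(q)|=o(\sqrt{q\log\log q})$ a.s. This suffices because the boundary exceeds $\sqrt{q\log\log q}$ by a growing factor. The tail events then reduce to Brownian tails, which vanish as $M\to\infty$ by the continuous computation. Finally, the continuity of the limiting crossing probability in the boundary level (no atoms) lets the portmanteau theorem pass limits through the closed/open event distinction.
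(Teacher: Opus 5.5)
\textbf{Gap in step (ii): the tail estimate for the logarithmic boundary.} Your Brownian computations in (i) are correct, and the Doob bound on dyadic blocks does settle the linear tail. The logarithmic tail, however, fails as written. You claim that $\sqrt{q}\,h^{-1}[\log(q/q_0)+h(a)]$ exceeds $\sqrt{q\log\log q}$ by a growing factor. That is only true once $\log(q/q_0)\gg\log\log q$, e.g.\ for $q\ge q_0^{1+\delta}$.

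At the start of the tail, $q\approx Mq_0$, the boundary is a bounded multiple of $\sqrt q$ (roughly $\sqrt{q(\log M+a^2)}$), while $\log\log q\to\infty$ as $q_0\to\infty$. So a coupling error that is only $o(\sqrt{q\log\log q})$ is not negligible there. Fix $\epsilon>0$; the transferred Brownian boundary $\sqrt q\,\{h^{-1}[\log(q/q_0)+h(a)]-\epsilon\sqrt{\log\log q}\}$ is negative at $q=Mq_0$ once $q_0$ is large. The reduced Brownian event therefore has probability at least $1/2$, not a small one.

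The LIL alone does handle $q\ge q_0^{1+\delta}$. But the range $[Mq_0,q_0^{1+\delta}]$ contains about $\delta\log_2 q_0$ dyadic blocks, and with only two moments there is no CLT rate that makes a blockwise sum converge. This is the same obstruction that sinks your H\'ajek--R\'enyi attempt. KMT-type rates $o(q^{1/p})=o(\sqrt q)$ would suffice, but they require $\mathbb{E}|X_1|^p<\infty$ for some $p>2$, which the lemma does not assume.

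\textbf{The missing idea: control time distortion multiplicatively.} Instead of bounding the spatial error additively, bound the time distortion multiplicatively; the scale invariance of the boundary absorbs it for free.
\begin{enumerate}
\item Skorokhod-embed $S_q=W(\mathcal T_q)$, where $\mathcal T_q$ is a sum of i.i.d.\ mean-one stopping times. Then $\mathcal T_q/q\to1$ a.s., so $\mathbb{P}\{\sup_{q\ge Mq_0}|\mathcal T_q/q-1|>\epsilon\}\to0$ as $q_0\to\infty$.
\item Put $b(s)=\sqrt s\,h^{-1}[\log s+h(a)]$, which is increasing. On the good event, a crossing at some $q\ge Mq_0$ forces $W(t)>\sqrt{q_0}\,b(t/((1+\epsilon)q_0))$ for some $t\ge(1-\epsilon)Mq_0$.
\item By Brownian scaling this has probability $\mathbb{P}\{\exists s\ge(1-\epsilon)M: W(s)>b(s/(1+\epsilon))\}$. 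That probability vanishes as $M\to\infty$, because $b(s/(1+\epsilon))\sim\sqrt{s\log s/(1+\epsilon)}$ is an upper-class function.
\end{enumerate}
This is essentially the Robbins--Siegmund invariance theorem for sample sums, which the paper simply quotes. The paper proves nothing here beyond the citation. In its appendix it obtains the linear Brownian formula from time inversion plus the reflection principle, rather than by conditioning on $W(1)$ as you do.

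\textbf{Two small repairs to (i).}
\begin{itemize}
\item The mixing measure that yields $s^{-1/2}e^{w^2/2s}\Phi(w/\sqrt s)$ is Lebesgue measure on $[0,\infty)$. A restricted standard normal gives $s+1$ in place of $s$.
\item Ville's inequality is an equality here not merely by path continuity, but because $M(s)\to0$ a.s.\ by the LIL. You should state this.
\end{itemize}
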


To apply these approximations across a large number of arms, $k$, we extend the approximations of \citet{waudbysmith2023distribution, waudbysmith2024time} to a moderate deviations principle. The formal results are given in Propositions \ref{prop:mdp-lin} and \ref{prop:mdp-log}, within Appendix \ref{sec:max-proof}. Although for the purposes of this paper we focus on two particular time-uniform tests under rather strong moment assumptions, we believe that the extension may be of independent interest, as it formally justifies time-uniform testing on a large (relative to the sample size) number of sample means. The proof technique may be extended to other time-uniform boundaries and weaker moment assumptions. 



Lemma \ref{lem:robbins-boundary} motivates a test which considers the maximum of $\hat Z_g$ of all arms with sufficiently many samples, and compares it to a critical value $t^*_\alpha$ defined using the right-hand side of \cref{eq:asymptotic-anytime-distribution-lin,eq:asymptotic-anytime-distribution-log}. 
In particular, let $\mathfrak{K}(t,\zeta)$ denote the set of arms $g$ such that $N_g(t) \ge T_\zeta = \zeta T / k$ for some $\zeta \ge 1$. Note that $\mathfrak{K}(T,1)$ is always nonempty by a simple counting argument. 
\begin{theorem}\label{thm:max-test}
Let $\Phi(x)$ and $\Psi^+(x)$, and $h(x) = x^2 + 2\log\Phi(x)$ be defined as in Lemma \ref{lem:robbins-boundary}. For any $k \ge 1$ and $\alpha \in (0,1)$, there exist unique  numbers $ z_\alpha(k), w_\alpha(k)> 0$ such that
\[2k\left[1 - \Phi\left(z_\alpha\right)\right] = k\Psi^+(w_\alpha) = \alpha.\footnote{Under sampling with replacement (\Cref{assn:dgm}), these critical values may be slightly sharpened from the Bonferroni correction $k[1-F(x)]=\alpha$ to the ``independent'' correction $1 - F(x)^k = \alpha$.}
\]
Define the tests
\begin{align}
    A_{\textnormal{lin}} &= 
        \mathbbm{1}\left\{\max_{t \ge 1} \max_{g \in \mathfrak{K}(t,\zeta)} \hat Z_g(N_g(t)) -  z_\alpha\sqrt{\frac{N_g(t)}{\zeta T/k}} > 0\right\}; \label{eq:max-clt-lin}\\
    A_{\textnormal{log}} &= \mathbbm{1}
        \left\{\max_{t \ge 1} \max_{g \in \mathfrak{K}(t,\zeta)} \hat Z_g(N_g(t)) - h^{-1}\left(\log\left[\frac{N_g(t)}{\zeta T/k}\right] + h(w_\alpha)\right) > 0\right\}. \label{eq:max-clt-log}
\end{align} 
Then, in a sequence of experiments indexed by $T$ satisfying assumptions \ref{assn:sample-once} and \ref{assn:sub-g}, and such that $T/[k_T\log^{4}(Tk_T)] \uparrow \infty$, the following claims hold true.
\begin{enumerate}[label=(\roman*)]
    \item Under the null hypothesis that $\max_g\mu(g) \le 0$, the tests $A_{\textnormal{lin}}$ and $A_{\textnormal{log}}$ control type-I error at level $\alpha$: $\limsup_{T \to \infty}\bb{E}(A_{\textnormal{lin}}) \le \alpha$ and $\limsup_{T \to \infty}\bb{E}(A_{\textnormal{log}}) \le \alpha$.
    \item In general, for all $\zeta T/k \le q \le T$, we have $h^{-1}\left(\log\left[\frac{q}{\zeta T/k}\right] + h(w_\alpha)\right) \le c_\alpha + C\sqrt{\log k}$, where $c_\alpha$ is the $1-\alpha$ quantile of the standard normal distribution. 
\end{enumerate}
\end{theorem}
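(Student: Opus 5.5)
The plan is to reduce the adaptive experiment to $k$ fixed i.i.d.\ ``tapes'', one per arm, and then apply a union bound over arms. The union bound is affordable only because of the moderate-deviations extensions (Propositions~\ref{prop:mdp-lin} and~\ref{prop:mdp-log}).

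\emph{Step 1: tape representation.} Under \Cref{assn:dgm}, the $q^{\text{th}}$ observation drawn from arm $g$ is equal in law to the $q^{\text{th}}$ entry of an i.i.d.\ sequence $(Y_{g,1},Y_{g,2},\dots)\sim F_g$, whatever the $\mathcal F_{t-1}$-measurable rule generating $g_t$. This holds because $X(t)$ is independent of $\mathcal F_{t-1}$. So I will couple the experiment to $k$ such sequences. Then $\hat Z_g(N_g(t))=\hat Z_g(q)$ with $q=N_g(t)$, where $\hat Z_g(q)$ is now computed from the tape. Arm $g$ can contribute to $A_{\mathrm{lin}}$ only once $q\ge T_\zeta=\zeta T/k$. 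Therefore
\[
\{A_{\mathrm{lin}}=1\}\subseteq\bigcup_{g=1}^k\Bigl\{\max_{q\ge T_\zeta}\hat Z_g(q)-z_\alpha\sqrt{q/T_\zeta}>0\Bigr\},
\]
and the same inclusion holds for $A_{\mathrm{log}}$. This removes all dependence on the sampling strategy. Only \Cref{assn:sample-once} is needed, to make $\hat Z_g$ well defined.

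\emph{Step 2: reduction to the centered null.} The denominator of \eqref{eq:def-arm-t-stat} is invariant to shifting the data by $\mu_g$. When $\mu_g\le0$, the numerator satisfies $\sum Y_{g,i}\le\sum (Y_{g,i}-\mu_g)$. Hence $\hat Z_g(q)$ is dominated by the $t$-statistic of the centered tape, and it suffices to treat $\mu_g=0$. By the union bound, it then suffices to show
\[
\mathbb P\Bigl\{\max_{q\ge q_0}\hat Z_g(q)>z_\alpha\sqrt{q/q_0}\Bigr\}\le \frac{\alpha}{k}\,(1+o(1))
\]
uniformly in $g$, with $q_0=T_\zeta$, and likewise with the target $\Psi^+(w_\alpha)=\alpha/k$ for the logarithmic boundary. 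Since $z_\alpha,w_\alpha\asymp\sqrt{2\log(k/\alpha)}$ diverge, \Cref{lem:robbins-boundary} alone does not suffice. What is needed is a \emph{relative} error statement at thresholds of order $\sqrt{\log k}$, which is exactly what Propositions~\ref{prop:mdp-lin} and~\ref{prop:mdp-log} provide. I expect their hypothesis to be a condition like $\log^{4}(q_0)\cdot(\text{threshold})^{c}/q_0\to0$. This should follow from $T/[k_T\log^4(Tk_T)]\to\infty$, because $q_0\ge T/k$ and the threshold is $O(\sqrt{\log k})$.

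\emph{Main obstacle: self-normalization.} The main obstacle is the self-normalization, which I would handle inside the moderate-deviations argument if the Propositions do not already cover it. I would write $\hat Z_g(q)=S_q/(\sqrt q\,\sigma_g)\cdot(\sigma_g/\hat\sigma_{g,q})$. By \Cref{assn:sub-g} and a maximal (peeling) inequality, the bound $\sup_{q\ge q_0}|\hat\sigma_{g,q}/\sigma_g-1|\lesssim\sqrt{\log(kT)/q_0}$ holds simultaneously over all $g$, except on an event of probability $o(1)$. On that event, the boundary is perturbed by a factor $1+\epsilon$. At a threshold $x\sim\sqrt{\log k}$, this changes the Gaussian tail probability by a factor $\exp(O(x^2\epsilon))$. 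That factor is $1+o(1)$ provided $\log k\sqrt{\log(kT)/q_0}\to0$, which is again implied by the rate condition. The remaining sum-level error would be controlled by a strong (KMT-type) approximation of $S_q$ by Brownian motion with $O(\log q)$ error. That error is negligible relative to the boundary $x q/\sqrt{q_0}$ for all $q\ge q_0$, so the crossing probability reduces to the Brownian one given by \Cref{lem:robbins-boundary}. Claim (i) follows by summing $k\cdot(\alpha/k)(1+o(1))$ over arms.

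\emph{Claim (ii).} This part is elementary. For $x\ge0$ we have $-2\log2\le 2\log\Phi(x)\le0$, hence $h(x)\ge x^2-2\log 2$ and $h^{-1}(y)\le\sqrt{y+2\log2}$. For $q\le T$, $\log[q/(\zeta T/k)]\le\log k$. Also $h(w_\alpha)\le w_\alpha^2$. Since $\Psi^+(a)\ge1-\Phi(a)$, we get $k[1-\Phi(w_\alpha)]\le\alpha$, so $w_\alpha$ is at most the $1-\alpha/k$ normal quantile, which is at most $c_\alpha+\sqrt{2\log k}$. Combining these with $\sqrt{a+b}\le\sqrt a+\sqrt b$ gives $h^{-1}(\cdot)\le w_\alpha+\sqrt{\log k+2\log2}\le c_\alpha+C\sqrt{\log k}$. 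The case $k=1$ is absorbed into the constant using $2\log2$.
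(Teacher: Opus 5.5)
\textbf{Claim (i).} Your argument here is the paper's. You use the tape representation of Lemma~\ref{lemma:iid-representation}, the domination $\hat Z_g(q)\le\hat Z'_g(q)$ under the null, a union bound over arms, and Propositions~\ref{prop:mdp-lin} and~\ref{prop:mdp-log} with $q_0=\zeta T/k$ and $r_T\in\{z_\alpha,w_\alpha\}$. Since $r_T^2\lesssim\log(k/\alpha)$, both $\psi_T^2$ and $\xi_T^2$ are $O(k\log^4(kT)/T)\to0$. The extra hypothesis $r_T\ge\sqrt{2\log k_T}$ of Proposition~\ref{prop:mdp-log} holds for $w_\alpha$ by Lemma~\ref{lemma:psi-characterization}. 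Keep the per-arm event on the finite range $q_0\le q\le T$, which suffices because $N_g(t)\le T$ and is exactly what the Propositions control. Your infinite-horizon $\max_{q\ge q_0}$ is stronger than needed.

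\textbf{The genuine gap is in claim (ii).} From $\Psi^+\ge 1-\Phi$ and $\Psi^+(w_\alpha)=\alpha/k$ you get $1-\Phi(w_\alpha)\le\alpha/k$, that is, $w_\alpha\ge\Phi^{-1}(1-\alpha/k)$. This is a \emph{lower} bound, not the upper bound you need. The Robbins boundary is crossed more often than a fixed-time threshold, so its critical value is larger.

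A sanity check exposes the error. At $k=1$ your chain would give $w_\alpha\le c_\alpha$. But $\Psi^+(c_\alpha)=\alpha+\varphi(c_\alpha)[c_\alpha+\varphi(c_\alpha)/\Phi(c_\alpha)]>\alpha$ and $\Psi^+$ is decreasing, so $w_\alpha(1)>c_\alpha$. Taking $q=\zeta T/k$ this also shows claim (ii) fails at $k=1$. That case cannot be ``absorbed into the constant'': one must take $k\ge2$, as the paper implicitly does.

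What is missing is an \emph{upper} bound on $\Psi^+$, which is where the paper's bound $w_\alpha\lesssim\sqrt{\log(k/\alpha)}$ comes from. The steps are:
\begin{itemize}
\item Lemma~\ref{lemma:psi-characterization} gives $\Psi^+(x)\le(x+2)\varphi(x)$ for $x\ge1$.
\item For $\alpha\le1/2$, so that $c_\alpha\ge0$, the Mills bound $1-\Phi(y)\ge\varphi(y)/(1+y)$ gives $\varphi(c_\alpha)\le(1+c_\alpha)\alpha$.
\item Combining, $\Psi^+(c_\alpha+s)\le(c_\alpha+s+2)(1+c_\alpha)e^{-c_\alpha s}e^{-s^2/2}\alpha\le C_0(1+s)e^{-s^2/2}\alpha$ for all $s\ge1$.
\item Take $s=C\sqrt{\log k}$ with a large universal $C$ ($C=3$ works). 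This makes the right side at most $\alpha/k$ for every $k\ge2$, so monotonicity of $\Psi^+$ gives $w_\alpha\le c_\alpha+C\sqrt{\log k}$.
\end{itemize}

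With that substitution the rest of your argument for (ii) goes through. Your bound $h^{-1}(y)\le\sqrt{y+2\log 2}$, obtained from $h(x)\ge x^2-2\log2$ for $x\ge0$, yields $h^{-1}(\log k+h(w_\alpha))\le w_\alpha+\sqrt{\log k+2\log 2}$. This is a valid and more elementary alternative to the paper's integration of $(h^{-1})'$ via Lemma~\ref{lem:boundary-transform}. The $2\log2$ term is absorbed into $C\sqrt{\log k}$ once $k\ge2$.
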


\begin{remark}\normalfont
In order to consider settings where $k_T$ diverges rapidly with the sample size, $T$, we cannot rely directly upon the results of \citet{waudbysmith2024time}, who showed a ``Kolmogorov-type'' (or small-deviations) approximation of the form
\begin{equation} \label{eq:small-deviations-example}
    \mathbb{P}\{\exists\, q \ge q_0 : \hat Z(q) \ge \psi(\alpha,q,q_0) \} \le   F_\psi(\alpha) + o(1),
\end{equation} 
where $\psi$ is a time-dependent boundary and $F_\psi$ is the corresponding boundary crossing probability for a Brownian path. In our setting, we must extend the above to a ``Cram\'er-type'' (or moderate deviations) approximation of the form
\begin{equation} \label{eq:moderate-deviations-example}
    \mathbb{P}\{\exists\, q \ge q_0 : \hat Z(q) \ge \psi(\alpha,q,q_0) \} \le   [1+o(1)]F_\psi(\alpha).
\end{equation}
This is accomplished by 
combining the quantitative invariance principle of \citet{sakhanenko1984rate} with  a non uniform anti-concentration bound for $F_\psi$, which is derived analytically. The proof is given in Appendix \ref{sec:max-proof}.
\end{remark}

\begin{remark}[Conservativeness]\normalfont\label{rmk:conservativeness}
Conservativeness of the tests in Theorem \ref{thm:max-test} occurs due to the need to account for complex dependence between the individual arms $t$-statistics caused by strategic allocation of samples. In order to bypass this dependence, we consider time-uniform, sequential tests on the individual arms' $t$-statistics, which will remain valid for any number of strategically allocated samples. This, of course, comes at a cost. 

  
Finally, the probability distributions used to construct the tests $A_{\textrm{lin}}$ and $A_{\textrm{log}}$ are based on an infinite horizon, as opposed to the finite horizon, $T$. In particular, these tests will remain valid even should the experimenter choose to continue the experiment until some arbitrary future time $T_2 > T$ while peeking at the data. This also incurs a cost; however, the cost is small, as the probability under the null of a rejection after time $T$ is relatively small for large $T$.
\end{remark}

\section{Strategic sampling algorithms and power}
\label{sec:ucb}

Having established validity of the tests corresponding to the pooled and maximum statistics in \cref{eq:feasible-clt,eq:max-clt-lin,eq:max-clt-log}, it is natural to consider strategic sampling algorithms that aim to maximize the value of these statistics. 
By doing so, we may attempt to optimize power against various alternative hypotheses. 
To carry this out, we make the following observations.
\begin{enumerate}
    \item Maximizing the rejection probability of the statistics in \cref{eq:feasible-clt,eq:max-clt-lin,eq:max-clt-log} can be expressed as an online optimization problem with bandit feedback. 
    \item If we take the value of an arm to be its signal-to-noise ratio, $z_g \coloneqq \mu_g/\sigma_g$, the corresponding pseudo-regret $R_T = \sum_{t \le T} (\max_{g} z_g) - z_{g_t}$ and number of mistakes $E_T = \sum_{t \le T} \mathbbm{1}\{(\max_{g} z_g) > z_{g_t}\}$ can be linked to the value of each of the test statistics considered in Section \ref{sec:statistic}. 
\end{enumerate}
In light of these observations, we consider bandit algorithms which aim to minimize the pseudo-regret, $R_T$, and the number of mistakes, $E_T$. 

\subsection{The SN-UCB Algorithm}

The proposed bandit algorithm uses deviation bounds for Studentized sums to bound the signal-to-noise ratio of each arm. For this reason, we call the algorithm ``self-normalized upper confidence bound,'' or SN-UCB for short. 

To introduce the SN-UCB procedure, we first introduce the exploration function
\[
	\tau(n,t;\beta) = 4.5\,\nu^2\sqrt{\beta \log(t)/n},
\] which determines how we bias arms' measurements to prioritize exploration. Here $\nu$ corresponds to the tail bound of \Cref{assn:sub-g}, and $\beta > 2$ is a tuning parameter.
Compared to the standard upper confidence bound algorithm for a $k$-armed bandit \citep{bubeck2012regret}, we replace the empirical mean of the arm $g$ by its studentized counterpart, $\hat Z_g\{N_g(t)\}$, and replace the upper confidence bound for the mean by
\begin{equation}
    \hat U_g(t;\beta) = \frac{\hat Z_g\{N_g(t-1)\}}{\sqrt{N_g(t-1)}} + \left[1 + \frac{|\hat Z_g\{N_g(t-1)\}|}{\sqrt{N_g(t-1)}} \right]\tau\{N_g(t-1),t;\beta\}, \label{eq:sn-ucb-definition}
\end{equation}
which bounds the signal-to-noise ratio. This leads to the following procedure.

\begin{center}
\begin{minipage}{.7\linewidth}
  \begin{algorithm}[H]
\caption{\label{algo:sn-ucb} SN-UCB.}

\medskip
{ \spacingset{1}
\begin{description}
    \item[Step 1:] For $t = 1, 2, \ldots, k$, choose $g_{2t-1},g_{2t}=t$. 
    \item[Step 2:] For $t = 2k+1, 2k+2, \ldots T$ 
    or until $A_{\text{log}}$ in \cref{eq:max-clt-log} is $1$,
    choose $g_{t} \in \{1,\dots,k\}$ that maximizes $\hat U_g(t;\beta)$.
\end{description}
}
\end{algorithm}
\end{minipage}
\end{center}

\bigskip 

We are able to prove a probabilistic bound on the number of times Algorithm \ref{algo:sn-ucb} selects a suboptimal arm. Note that the bound given below depends on the distributions of each of the arms: thus, it allows us to establish lower bounds on the rejection probability that hold for suitably constrained  classes of alternatives. We state results in expectation for simplicity; we refer the reader to Appendix \ref{sec:proof-regret-thm}, which contains the proof of Theorem \ref{thm:sn-ucb-regret-bound}, for a high probability statement. 

\begin{theorem}\label{thm:sn-ucb-regret-bound} 
    Put $z^* = \max_{g'}z_{g'}$. For each arm $g$ we write $\Delta_g = z^* - z_g$. The pseudo-regret $R_T = \sum_{t \le T} z^* - z_{g_t}$ and number of errors $E_T = \sum_{t \le T}\mathbbm{1}\{z_{g_t} < z^*\} $ attained by Algorithm \ref{algo:sn-ucb} satisfy the following inequalities:
\[\mathbb{E}[R_T] \le C_\nu\beta \log T \sum_{g: z_g < z^*} \left\{\Delta_g + \frac{(1+|z_g|)^2}{\Delta_g}\right\}; \quad \mathbb{E}[E_T] \le C_\nu\beta \log T \sum_{g: z_g < z^*} \left\{1 + \frac{(1+|z_g|)^2}{\Delta_g^2}\right\}.\]
\end{theorem}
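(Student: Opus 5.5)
The plan follows the classical UCB analysis of \citet{bubeck2012regret}, with the empirical mean replaced by the self-normalized estimate $\hat z_g(n) := \hat Z_g(n)/\sqrt{n}$ of $z_g=\mu_g/\sigma_g$.

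\textbf{Step 1: concentration of $\hat z_g$.} The key ingredient is a deviation inequality for $\hat z_g(n)$ after $n$ samples. Concretely, we want for every $\epsilon \le 1$ (up to constants)
\[
\mathbb{P}\bigl\{|\hat z_g(n)-z_g| > C\nu^2(1+|z_g|)\epsilon\bigr\}\le 4\exp(-n\epsilon^2),
\]
and a version in which $(1+|\hat z_g|)$ replaces $(1+|z_g|)$ on the event of interest. We write
\[
\hat z_g - z_g = \frac{\hat\mu_g-\mu_g}{\hat\sigma_g} + z_g\Bigl(\frac{\sigma_g}{\hat\sigma_g}-1\Bigr)
\]
and bound the two pieces separately:
\begin{itemize}
\item The first piece is controlled by sub-Gaussian concentration of the mean (Assumption~\ref{assn:sub-g}), which gives $|\hat\mu_g-\mu_g|\le \nu\sigma_g\sqrt{2x/n}$.
\item The second piece is controlled by concentration of the standardized sample variance, $|\hat\sigma_g^2/\sigma_g^2-1|\lesssim \nu^2(\sqrt{x/n}+x/n)$. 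This follows from a Bernstein bound for squares of sub-Gaussians, since these are sub-exponential with parameter $\nu^2$.
\end{itemize}
This is where the factor $4.5\nu^2$ and the multiplicative $(1+|\hat z_g|)$ in \eqref{eq:sn-ucb-definition} come from. Since the sampling is adaptive, we apply the bound for each fixed $n\le t$ and take a union bound over $n$, as in standard UCB. Choosing $x=\beta\log t$ with $\beta>2$ makes the failure probabilities summable in $t$.

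\textbf{Step 2: UCB decomposition.} Let $g^*$ be the optimal arm. If a suboptimal arm $g$ is pulled at round $t$, then one of three events holds:
\begin{enumerate}[label=(\alph*)]
\item $\hat U_{g^*}(t)<z^*$, i.e.\ the optimal arm's index underestimates;
\item $\hat z_g$ overestimates $z_g$ by more than its confidence width;
\item $N_g(t-1)<\ell_g$, with $\ell_g \asymp C_\nu\beta\log T\,(1+|z_g|)^2/\Delta_g^2$.
\end{enumerate}
Event (c) arises as follows. On the complement of (a) and (b), we have $\hat U_g \le z_g + 2(1+|\hat z_g|)\tau$. Using Step 1, $|\hat z_g|\le |z_g|+O(1)$ once $n\gtrsim\beta\log t$, so this bound is at most $z_g+C(1+|z_g|)\nu^2\sqrt{\beta\log t/n}$. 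This falls below $z^*$ once $n\ge \ell_g$.

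For (a), the delicate point is that the width of $\hat U_{g^*}$ involves $|\hat z_{g^*}|$ rather than $|z^*|$. I would handle this by showing that $\hat U_{g^*}\ge z^*$ whenever the two deviation events of Step 1 hold. The idea is that if $\hat z_{g^*}$ is small, its width is small but $\hat\sigma$ is close to $\sigma$; if it is large, the width compensates.

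\textbf{Step 3: summing up.} Summing over rounds gives
\[
\mathbb{E}[N_g(T)] \le \ell_g + 1 + \sum_t O(t^{1-\beta}) \le C_\nu\beta\log T\,\Bigl(1+\frac{(1+|z_g|)^2}{\Delta_g^2}\Bigr).
\]
The additive term $1$ also absorbs the two forced initial pulls and the rounds $n\lesssim\beta\log T$ before the variance estimate stabilizes. Using $E_T=\sum_{g\ne g^*}N_g(T)$ and $R_T=\sum_g\Delta_g N_g(T)$ yields both claimed bounds. Early stopping via $A_{\log}$ only reduces the pull counts, so it does not affect the argument.

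\textbf{Main obstacle.} I expect the hardest step to be Step 1 and its use in case (a). With only $n\approx\beta\log t$ samples, $\hat\sigma_g$ can be very small, which inflates $\hat z_g$. We need a lower-tail bound on $\hat\sigma_g/\sigma_g$ that holds with probability $1-t^{-\beta}$ and loses only constants depending on $\nu$. I would first try a one-sided Bernstein bound on $\sum (X-\mu)^2$, which has a lighter lower tail. If the constants fail for very small $n$, I would restrict attention to $n\ge C\nu^4\beta\log t$ and charge the earlier pulls to the additive $\beta\log T$ term.
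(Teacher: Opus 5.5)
Your architecture matches the paper's. Both build a self-normalized confidence bound from separate mean and variance concentration, use the three-event UCB decomposition, and sum under $\beta>2$. Your fallback of charging the first $\asymp\nu^4\beta\log T$ pulls of a \emph{suboptimal} arm to the additive term is also how the paper gets $u_g^*\le n_0(T)+C_\nu(1+|z_g|)^2\beta\log T/\Delta_g^2$ (there $e_g^+=\infty$ while $\tau\ge1$).

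The gap is in Step 1, as it feeds case (a). Optimism of the optimal arm, $\hat U_{g^*}(t)\ge z^*$, must hold with probability $1-O(t^{-\beta})$ at \emph{every} sample size $n\ge2$ of that arm. This includes the regime $\tau\ge1$, and it must hold with the algorithm's exact width $4.5\nu^2\sqrt{\beta\log t/n}$. Otherwise the optimal arm can be starved, and there is nothing to charge that to, so your fallback does not apply here.

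Your Step 1 falls short of this in three ways:
\begin{itemize}
\item It is claimed only for $\epsilon\le1$.
\item Your decomposition $\hat z_g-z_g=(\hat\mu_g-\mu_g)/\hat\sigma_g+z_g(\sigma_g/\hat\sigma_g-1)$ needs $\hat\sigma_g$ bounded below, which is unavailable in exactly that regime.
\item Even for small $\tau$, passing from the $(1+|z_g|)$ form to the $(1+|\hat z_g|)$ form via $1+|z_g|\le1+|\hat z_g|+|\hat z_g-z_g|$ costs a factor $1/(1-2\tau)$. Because the exploration constant is fixed, inflating $4.5$ to some $C$ makes the failure probability $t^{-\beta(4.5/C)^2}$, and then $\beta>2$ no longer suffices. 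You assert, but do not derive, that $4.5$ is the right constant.
\end{itemize}

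The missing idea is an exact identity that never divides by $\hat\sigma_g$. Set $m_n=(\hat\mu_g-\mu_g)/\sigma_g$ and $\hat v_n=\hat\sigma_g^2/\sigma_g^2$, so that $\hat z_g=(z_g+m_n)/\sqrt{\hat v_n}$. Then
\[
\hat z_g-z_g=m_n+\hat z_g\bigl(1-\sqrt{\hat v_n}\bigr),
\]
so $|\hat z_g-z_g|\le\delta(1+|\hat z_g|)$ on $\{|m_n|\le\delta\}\cap\{|1-\sqrt{\hat v_n}|\le\delta\}$ for \emph{every} $\delta>0$. A tiny $\hat\sigma_g$ inflates $\hat z_g$ and its width together, which dissolves your ``main obstacle''.

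The two halves of the variance event are handled separately:
\begin{itemize}
\item The lower half $1-\sqrt{\hat v_n}\le\delta$ is trivial for $\delta\ge1$. For $\delta<1$ it follows from Bennett's inequality applied to $\sigma_g^2-(X-\mu_g)^2\le\sigma_g^2$, using $\mathrm{Var}\{(X-\mu_g)^2\}\le6.8\nu^4\sigma_g^4$, which the sub-Gaussian MGF implies.
\item The sub-exponential upper tail gives $\sqrt{\hat v_n}-1\le4\delta/c_0$.
\end{itemize}
Choosing $x=n\delta^2/(c_0^2\nu^4)$, all three events hold outside probability $4e^{-x}$ once $c_0\ge4.5$. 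Taking $\delta=\tau$ then gives failure probability $4t^{-\beta}$ with the algorithm's exact width.

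Rearranging $\delta(1+|\hat z_g|)\le\delta(1+|z_g|+|\hat z_g-z_g|)$ on the same event yields $|\hat z_g-z_g|\le\tau(1+|z_g|)/(1-\tau)$ and $E_g\le e_g^+$, which is what your case (c) uses. Note this gives $|\hat z_g|\le1+2|z_g|$ for $\tau\le1/2$, i.e.\ $O(1+|z_g|)$ rather than $|z_g|+O(1)$; that is harmless. With this lemma in hand, the rest of your plan goes through as written.
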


Theorem \ref{thm:sn-ucb-regret-bound} is established by modifying the high-probability analysis of the empirical UCB algorithm of \citet{audibert2009exploration}. The central challenge in its proof is that both the location and width of the confidence band depend on the unknown signal-to-noise ratio $z_g$, which must be replaced by the estimated quantity $\hat Z_g\{N_g(t)\} / \sqrt{N_g(t)}$. 
\begin{remark}[Implications for power] \label{rmk:power}\normalfont
Combining Theorem \ref{thm:sn-ucb-regret-bound} with Theorems \ref{thm:pooled-pad} and~\ref{thm:pooled-thr}, we find that the pooled statistic is approximately normal  with mean $\sqrt{T}(z^* - \frac{1}{T}\mathbb{E}[R_T])$. In this sense, the test is competitive with an oracle decision rule when $\mathbb{E}[R_T] \ll \sqrt{T}$. Similarly, given an optimal arm $g^*$, its $t$-statistic will be approximately normal with mean $\sqrt{T-\mathbb{E}[E_T]}z^*$, suggesting that the condition $\mathbb{E}[E_T] \ll T$ is roughly sufficient to compete with the oracle rule.

In view of the central limit theorems of \Cref{thm:pooled-pad,thm:pooled-thr} and the invariance principle that justifies \Cref{thm:max-test}, these characterizations can be made exact for local parameter sequences in which $\Delta_g \asymp T^{-1/2}$. However, in this case, neither condition is implied by the regret bound of \Cref{thm:sn-ucb-regret-bound} due to its inverse dependence on $\Delta_g$. A complete characterization of the tests' power therefore requires (i) characterizing the moderate and large deviations of the considered test statistics, and (ii) characterizing regret in  regimes where $\Delta_g \to 0$ at a slower rate. These important problems are left to future work.
\end{remark}

\section{Simulations}
\label{sec:sims}

We now turn from theory to simulations, asking whether the size and power properties of Sections~3 and~4 hold up at realistic sample sizes, and whether the regret bound of SN-UCB is meaningful under realistic alternatives and sample-size constraints. 

\subsection{Simulation Design}

We vary the number of arms $k \in \{5,10,20,50\}$ and horizon $T \in \{200, 500, 1000, 2000\}$. Each arm $g$ generates i.i.d.\ outcomes with mean $\mu_g$ and variance $\sigma_g^2$. We consider three scenarios:
\begin{description}
  \item[Null:] $\mu_g = 0$ and $\sigma_g = 1$ for all $g$.
  \item[Single spike:] $\mu_1 = \delta$, $\mu_g = 0$ for $g > 1$, and $\sigma_g = 1$ for all $g$.
  \item[Multi-scale:] $\mu_g = \delta g$ and $\sigma_g^2 = 2 g^3$, so that larger arms have higher means but also higher variance. Crucially, the arm with the largest mean ($g = k$) does not have the largest signal-to-noise ratio.
\end{description}
For each configuration we perform 1,000 replications.

We compare six sampling strategies: (i) uniform allocation, which samples each arm with equal probability; (ii) SN-UCB (Algorithm~\ref{algo:sn-ucb}), which targets the arm with the highest estimated signal-to-noise ratio; (iii) standard UCB \citep{auer2002finite}, which targets the arm with the highest estimated mean; (iv) UCB-V \citep{audibert2009exploration}, a variance-aware variant of UCB whose exploration bonus scales with $\hat\sigma_g$; (v) Thompson sampling \citep{thompson1933likelihood}; and (vi) an oracle that always samples the arm with the true highest signal-to-noise ratio. For each strategy, we evaluate the pooled statistic and both max statistics (linear and log boundaries).

Two benchmarks warrant comment, since they enjoy an additional advantage relative to the adaptive procedures. The oracle samples only the optimal arm, so inference reduces to a one-sample $t$-test, and uniform allocation can similarly use a Bonferroni-corrected $t$-test without further adjustment.

\subsection{Type I Error}

Table~\ref{tab:type1} reports empirical rejection rates under the null hypothesis at nominal level $\alpha = 0.05$, when the sampling strategy is a standard UCB policy. 
The pooled statistic maintains close to nominal size across all configurations, including settings where $k$ is large relative to $T$ (e.g., $k = 50$, $T = 200$). This is notable because our theoretical results (Theorems~\ref{thm:pooled-pad} and~\ref{thm:pooled-thr}) require $k^2 \log^3(kT) / T$ to be small, which is not realistic when $k = 50$ and $T = 200$. The empirical robustness suggests that the pooled statistic may be valid under weaker conditions than those we establish.

The max statistics are close to nominal or conservative, consistent with Theorem~\ref{thm:max-test}. We impose a qualification floor $q_0 = \max\{25, T/k\}$ which binds at the small-$T$, large-$k$ corner, leaving zero rejections at $(k,T) = (50, 200)$ (no arm ever qualifies). The linear boundary shows mild size inflation in large-$k$, large-$T$ cells (.082 at $(50, 2000)$, .093 at $(20, 500)$), where the asymptotic regime $T/[k\log^4(kT)] \to \infty$ of Theorem~\ref{thm:max-test} is least realistic; the log boundary, which grows more slowly, remains close to nominal at the same cells.

\begin{table}[b!]
\centering
\begin{tabular}{cc|cccc|cccc|cccc}

& & \multicolumn{4}{c|}{Pooled} & \multicolumn{4}{c|}{Max (Linear)} & \multicolumn{4}{c}{Max (Log)} \\
\midrule \midrule
& $T$ & 200 & 500 & 1000 & 2000 & 200 & 500 & 1000 & 2000 & 200 & 500 & 1000 & 2000 \\
\midrule
\parbox[t]{2mm}{\multirow{4}{*}{\rotatebox[origin=c]{90}{$k$}}}
& 5  & .064 & .042 & .044 & .041 & .058 & .051 & .039 & .051 & .025 & .018 & .016 & .013 \\
& 10 & .056 & .053 & .055 & .048 & .040 & .055 & .052 & .051 & .028 & .034 & .016 & .019 \\
& 20 & .042 & .053 & .053 & .053 & .000 & .093 & .067 & .063 & .009 & .053 & .027 & .027 \\
& 50 & .025 & .033 & .040 & .043 & .000 & .001 & .055 & .082 & .000 & .005 & .060 & .045 \\

\end{tabular}
\caption{\small\label{tab:type1}Empirical type~I error rates under the null hypothesis ($\mu_g = 0$ for all $g$) at nominal level $\alpha = 0.05$. Results are shown for the pooled statistic and both max statistics (linear and log boundaries) across varying numbers of arms ($k$) and sample sizes ($T$). The max-statistic tests use the qualification rule $q_0 = \max(25, T/k)$, equivalently $\zeta = \max(1, 25k/T)$, which keeps qualifying arms in the regime where the Studentized statistic is close to its standard-normal limit. }
\end{table}

\subsection{Power: Multi-Scale Alternative}
\label{sec:multiscale-power}

Figure~\ref{fig:multiscale-power} displays power curves under the multi-scale alternative, where $\mu_g = \delta g$ and $\sigma_g^2 = 2 g^3$. This design is challenging because the arm with the largest mean ($g = k$) has a lower signal-to-noise ratio than smaller arms. An algorithm that targets the highest mean will concentrate samples on a suboptimal arm for the purpose of our test.

Across all three statistics, SN-UCB tends to outperform uniform allocation and the other adaptive methods. At small effect sizes (e.g., $\delta = 0.4$), SN-UCB achieves power near 0.75 with the pooled statistic, compared to roughly 0.26 for uniform allocation and 0.44 for standard UCB. The advantage of SN-UCB is that it directly targets the signal-to-noise ratio, which determines the drift of both the pooled and max statistics under the alternative.
Standard UCB, UCB-V, and Thompson sampling perform worse than SN-UCB in this setting because they instead target the arm with the highest mean. In the multi-scale design, this leads them to oversample the high-variance arms, reducing power.

The oracle provides an upper bound on achievable power. Recall that the oracle uses a simple $t$-test without correction for adaptive allocation, so the gap between SN-UCB and the oracle reflects both the cost of not knowing the optimal arm and the cost of robustness to strategic sampling. Still, SN-UCB becomes competitive as $\delta$ increases, consistent with the regret bound in Theorem~\ref{thm:sn-ucb-regret-bound}.


\begin{figure}[htbp]
\centering
\includegraphics[width=0.95\textwidth]{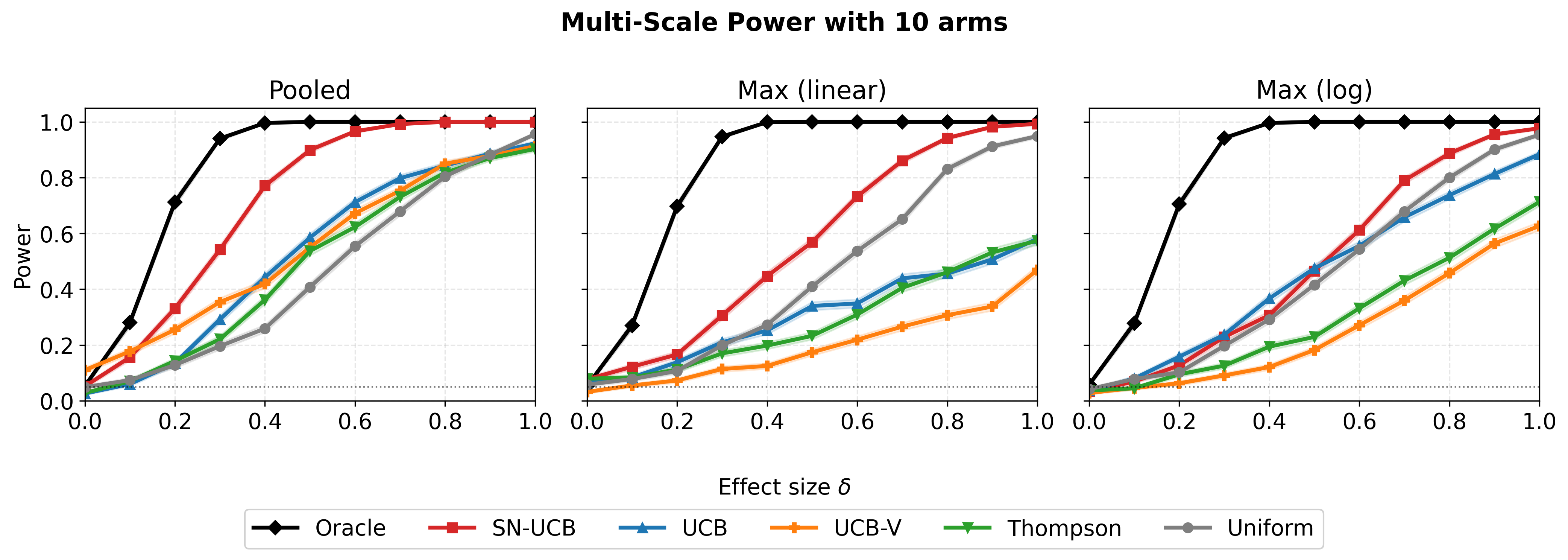}
\caption{\small Power curves under the multi-scale alternative ($\mu_g = \delta g$, $\sigma_g^2 = 2 g^3$) with $k=10$ arms, horizon $T=250$, and $\delta \in [0,1]$. Panels show the pooled, max-linear, and max-log statistics. Adaptive rules: SN-UCB (red), standard UCB (blue), UCB-V (orange), Thompson (green). Baselines: oracle ($t$-test on the highest-SNR arm; black) and uniform allocation with Bonferroni-corrected $t$-test (gray).}
\label{fig:multiscale-power}
\end{figure}

\subsection{Power: Single-Spike Alternative}
\label{sec:spike-power}

Figure~\ref{fig:spike-power} displays power curves under the single-spike alternative, where only one arm has a nonzero mean ($\mu_1 = \delta$) and all variances are equal ($\sigma_g = 1$). In this setting, the arm with the highest mean coincides with the arm with the highest signal-to-noise ratio.

The results differ qualitatively from the multi-scale case. For the pooled statistic, standard UCB and Thompson sampling now outperform SN-UCB, particularly at moderate effect sizes. This occurs because UCB and Thompson sampling more aggressively concentrate samples on the single best arm, whereas SN-UCB's exploration is more conservative as it accounts for uncertainty in the estimated variance.

For the max statistics, the differences among adaptive methods are smaller. All adaptive strategies substantially outperform uniform allocation, and the ranking among UCB, Thompson, and SN-UCB varies with $\delta$. Notably, uniform allocation with Bonferroni-corrected $t$-tests remains a reasonable baseline here, but adaptive methods still provide a meaningful improvement.

These results illustrate a key practical consideration: the relative performance of sampling algorithms depends on the signal structure. When the experimenter suspects that one arm dominates (single-spike), standard bandit algorithms may suffice, and max testing may be more appropriate. When effects are heterogeneous and the optimal arm is not obvious (multi-scale), SN-UCB's focus on signal-to-noise ratios provides a meaningful advantage, and pooled testing appears to be both more robust and more powerful.


\begin{figure}[b!]
\centering
\includegraphics[width=0.95\textwidth]{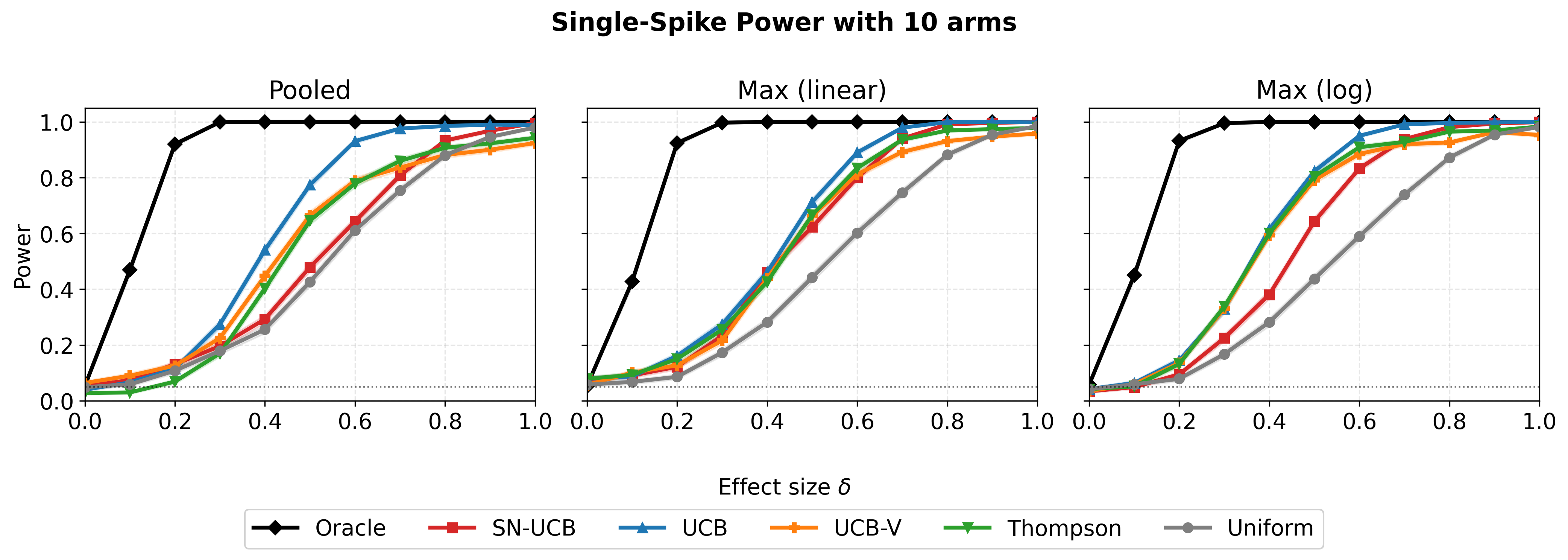}
\caption{\small Power curves under the single-spike alternative ($\mu_1 = \delta$, $\mu_g = 0$ for $g > 1$, $\sigma_g = 1$) with $k=10$ arms and horizon $T=250$. Panels show the pooled, max-linear, and max-log statistics. Adaptive rules: SN-UCB (red), standard UCB (blue), UCB-V (orange), Thompson (green). Baselines: oracle ($t$-test on the highest-SNR arm; black) and uniform allocation with Bonferroni-corrected $t$-test (gray).}
\label{fig:spike-power}
\end{figure}


\subsection{Application: Targeting an Unconditional Cash Transfer Program}
\label{sec:uct}

Unconditional cash transfers (UCTs)\textemdash direct payments to poor households, without restricting recipient behavior\textemdash have become a central tool of anti-poverty policy, and a growing body of evaluation evidence documents that they can meaningfully raise consumption and welfare in low-income settings \citep{haushofer2016short,bastagli2016cash}. When considering whether to scale such a program, policymakers typically must choose among variants proposed in the literature, which differ in how households are targeted and in the size of the transfer. We construct $k = 14$ arms to represent such a choice: each arm combines a \emph{targeting rule} with a \emph{transfer amount} (\$20 or \$50 per month). Eight core arms pair both transfer amounts with four widely-used targeting rules -- universal, rural, the bottom 20\% of the income distribution, and households with young children -- and six further arms cover three additional rules: community-based targeting, self-targeting via a costly application, and a categorical rule targeting the elderly. The outcome is the monthly consumption gain (USD), and the experimenter wishes to detect whether any variant clears a cost-effectiveness threshold of \$0.60 per dollar transferred. Means and variances are chosen to reflect magnitudes reported in the UCT evaluation literature \citep{haushofer2016short,merttens2013kenya,bastagli2016cash,blattman2014generating}; the choices are meant to be illustrative; Appendix~\ref{app:uct} reports the full parameter table and the studies that anchor each rule.

An important feature of the setting is that large average effects come bundled with large variance. Small transfers to tightly targeted, very poor households are spent on predictable necessities \textemdash food, medicine, basic household items\textemdash and so produce low-variance consumption changes. Broader targeting rules reach more heterogeneous households, and larger transfers give households room to make large, infrequent purchases (livestock, business equipment, home repair) in place of steady week-to-week consumption; both forces raise the mean response and its variance together. The arm with the largest net mean (a rural \$50 transfer, $\tilde\mu = \$10$, $\sigma = 45$, where $\tilde\mu_g$ denotes the consumption gain in excess of the arm's cost-effectiveness threshold) is therefore not the arm with the largest signal-to-noise ratio (an income-targeted \$20 transfer, $\tilde\mu = \$4$, $\sigma = 8$, $\text{SNR} = 0.50$).

Figure~\ref{fig:uct-power} reports power curves over $T \in \{50, 100, \ldots, 500\}$ for five sampling rules\textemdash SN-UCB, standard UCB \citep{auer2002finite}, variance-aware UCB \citep{audibert2009exploration}, Thompson sampling \citep{thompson1933likelihood}, and uniform allocation with a Bonferroni-corrected $t$-test\textemdash at 1{,}000 replications per cell. With the pooled statistic, SN-UCB overtakes uniform at $T \approx 150$ and reaches power $0.91$ at $T = 500$; the mean-based adaptive rules underperform because they concentrate on the largest-mean arm (a large transfer to rural houselholds) rather than the largest-SNR arm (a smaller income-targeted transfer). The picture shifts on the max-log statistic, where rapidly concentrating on a single high-mean arm is sufficient to demonstrate an effect, and standard UCB performs best. Interestingly, uniform allocation remains a reasonable benchmark in this study, especially at small sample-sizes. Allocation paths are reported in Appendix~\ref{app:uct}.


\begin{figure}[b!]
\centering
\includegraphics[width=0.95\textwidth]{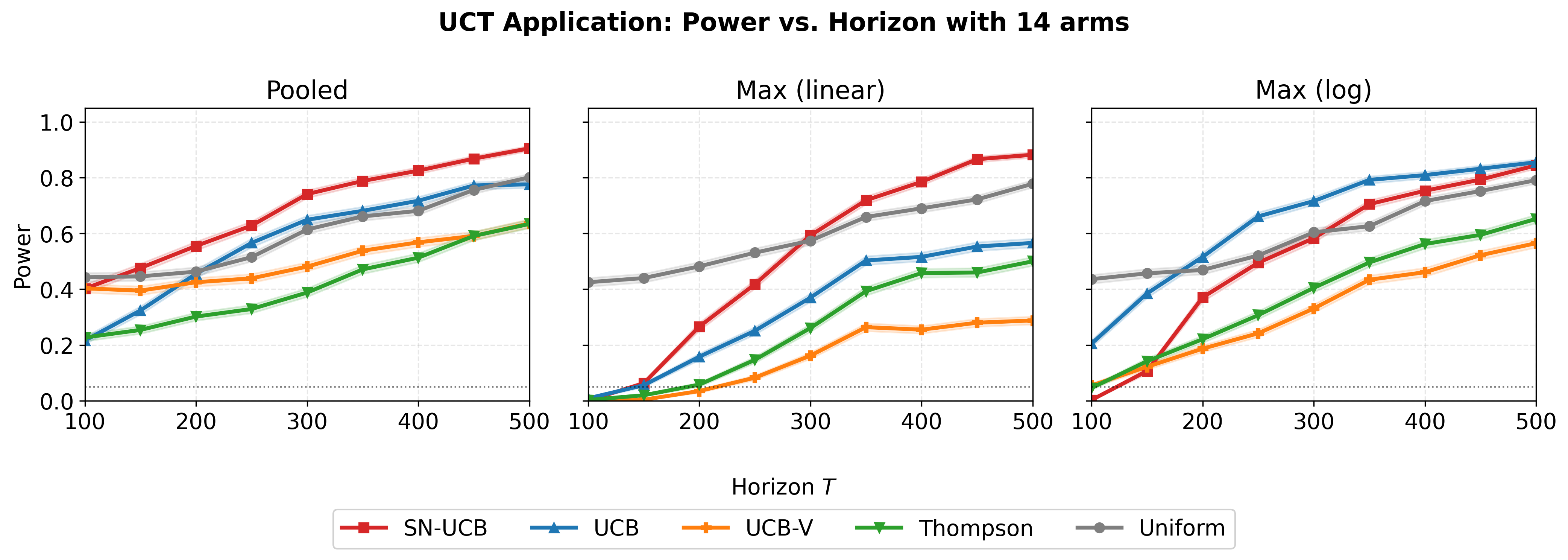}
\caption{\small Empirical rejection probability under the calibrated UCT alternative as a function of horizon $T$, with 1{,}000 Monte Carlo replications per cell. Panels show the pooled, max-linear, and max-log statistics; uniform allocation uses a Bonferroni-corrected one-sided $t$-test.}
\label{fig:uct-power}
\end{figure}


\subsection{Summary}

The simulations confirm the theoretical predictions of Sections~3 and~4. Both the pooled and max statistics control type~I error, with the pooled statistic achieving nominal size and the max statistics being conservative. Adaptive sampling improves power relative to uniform allocation, with the magnitude of improvement depending on the signal structure. Importantly, these gains are achieved despite the fact that adaptive methods must use test statistics that are robust to strategic allocation, whereas the oracle and uniform-allocation benchmarks can use simple $t$-tests. SN-UCB is particularly effective when the arm with the largest effect does not have the largest signal-to-noise ratio, as it directly optimizes the quantity that determines statistical power. When effects are concentrated in a single arm with equal variances, standard bandit algorithms perform comparably or better. The calibrated UCT application of Section~\ref{sec:uct} emphasizes that this multi-scale phenomenon is not a theoretical curiosity: it emerges naturally from the economics of household consumption response, where targeting precision and outcome variance are tightly linked.

\section{Conclusions}
\label{sec:conclusions}

This paper develops a framework for \emph{demonstration experiments}, where the goal is to establish whether any treatment arm exceeds a threshold rather than to estimate effects or identify the best arm. We propose two test statistics---pooled and max---that remain valid under adaptive sampling, and introduce the SN-UCB algorithm to optimize power by treating experimental design as stochastic optimization with bandit feedback.

Our results show that sharp inference is possible under nearly unrestricted adaptive sampling. The pooled statistic aggregates evidence across arms and achieves nominal size under the two-sided null. The max statistic is conservative but supports early stopping and focuses power on the most promising arm. Both tests require only that each arm is sampled twice initially, imposing no further restrictions on the sampling strategy. This contrasts with best-arm identification, which requires simultaneous inference on all arms and demands stronger design constraints.

From a practical standpoint, experimenters conducting exploratory studies can benefit substantially from adaptive designs. The pooled test suits settings where multiple arms may have moderate effects; the max test excels when one arm dominates. Using SN-UCB along with these tests offers power gains over uniform allocation without sacrificing type~I error control.

\subsection*{Future work}

Several questions remain open. A complete power characterization under semi-local alternatives requires analyzing moderate deviations of our statistics and the behavior of SN-UCB when gaps between arms shrink with sample size. Extensions to richer feedback structures---where multiple outcomes or overlapping subpopulations are observed---would broaden applicability. Although simulations suggest the pooled statistic performs well when $k$ is large relative to $T$, theoretical justification is lacking. \citet{kaufmann2018sequential} designed a ``Murphy sampling" algorithm which is shown to have good properties in the threshold bandit problem, and warrants further study in our context. \citet{sandoval2026multi} additionally study how to adaptively budget the type-I error allowance across arms within their e-process framework; extending such adaptive $\alpha$-allocation to our asymptotic, Brownian-motion-based setting would be an interesting and challenging direction for future work. 

\spacingset{1.25}
\bibliographystyle{plainnat}
\bibliography{bibliography}

\clearpage


\appendix

\numberwithin{theorem}{section}
\numberwithin{lemma}{section}
\numberwithin{proposition}{section}
\numberwithin{corollary}{section}
\numberwithin{definition}{section}
\numberwithin{assumption}{section}
\numberwithin{claim}{section}
\numberwithin{problem}{section}
\numberwithin{remark}{section}
\numberwithin{example}{section}
\numberwithin{solution}{section}

\spacingset{1.8}

\section{Section 3 Proofs}
\subsection{Probability Lemmas}

\subsubsection{Equivalent Representation of the DGP under \Cref{assn:dgm}}

We use the following fact extensively.

\begin{lemma}\label{lemma:iid-representation}
\lomtext{Under \Cref{assn:dgm}, the joint law of the observed data $\{(g_t, X_{g_t}(t))\}_{t=1}^T$ is identical to that of an experiment in which the $k$ arms are sampled from $k$ mutually independent i.i.d.\ sequences, with arm $g$ sampled from $F_g$.}
\end{lemma}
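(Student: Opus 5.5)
The plan is the standard ``stack of cards'' (or ``reward table'') coupling. In the i.i.d.\ experiment we draw, independently of everything else, $k$ mutually independent sequences $(Y_g(1), Y_g(2), \ldots)$ with $Y_g(j) \sim F_g$ i.i.d. We run the same (possibly randomized) sampling rule on these sequences: at round $t$ the rule selects $g_t$ from the past observations, and the experimenter then observes $Y_{g_t}(N_{g_t}(t))$, the next unused entry of arm $g_t$'s sequence. We will show that, for every $t$, the law of $\{(g_s, \text{observation}_s)\}_{s\le t}$ is the same in the original experiment of \Cref{assn:dgm} and in this coupled construction. The argument is by induction on $t$, and it suffices to match the conditional laws one step at a time.

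Fix $t$ and condition on the history $\mathcal{F}_{t-1}$. In both experiments, $g_t$ is drawn from the same conditional kernel given the history, since the sampling rule is the same measurable function of the past, plus possibly independent external randomization. We then treat the two experiments in turn.

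\emph{Original experiment.} Because the vectors $X(t)$ are i.i.d.\ across $t$, the vector $X(t)$ is independent of $\mathcal{F}_{t-1}$. The arm $g_t$ depends only on $\mathcal{F}_{t-1}$ and independent randomization, so $X(t)$ is also independent of $(\mathcal{F}_{t-1}, g_t)$. Hence, conditionally on $(\mathcal{F}_{t-1}, g_t = g)$, the observation $X_g(t)$ has law $F_g$.

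\emph{Coupled experiment.} The entry $Y_g(N_g(t))$ has not been revealed before round $t$; it is the $N_g(t)$-th entry of arm $g$'s sequence, and $N_g(t) = N_g(t-1)+1$ is determined by the history. The history $\mathcal{F}_{t-1}$ together with $g_t$ is a function of the previously revealed entries $\{Y_{g'}(j): j \le N_{g'}(t-1)\}$ and the independent randomization. Unrevealed entries are independent of these, because all entries are mutually independent and the index $N_g(t)$ is predictable. Formally, we decompose on the events $\{N_g(t-1) = n\}$ and use the fact that $Y_g(n+1)$ is independent of $\sigma(Y_{g'}(j): \text{all } j \text{ for } g'\neq g,\; j\le n \text{ for } g)$. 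It follows that the conditional law is again $F_g$.

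The joint laws then agree by the chain rule for conditional distributions, for instance via the Ionescu--Tulcea theorem or by induction on finite-dimensional distributions. The only subtle point is the coupled-experiment step: the index $N_g(t)$ is random, so the independence claim must be checked by splitting on its values rather than asserted directly. Note also that the lemma concerns only the observed data, so the cross-arm dependence within $X(t)$ never enters; each $X(t)$ is used exactly once.
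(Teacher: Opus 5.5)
Your proposal is correct and takes essentially the same route as the paper: the same ``reward table'' construction with independent streams $Y_g(i)\sim F_g$, and an induction on rounds that matches, in both experiments, the conditional law of the selected arm and its observation given the past history. Your decomposition on the events $\{N_g(t-1)=n\}$ makes rigorous a step the paper only asserts, namely that $Y_g(n)$ is independent of the history even though the index $n$ is random and history-determined. Allowing independent external randomization is a harmless generalization of the paper's requirement that $g_t$ be $\mathcal{F}_{t-1}$-measurable.
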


\begin{proof}[Proof of Lemma~\ref{lemma:iid-representation}]
Formally, let $\{Y_g(i)\}_{i \ge 1}$, $g = 1,\ldots,k$, be mutually independent i.i.d.\ sequences with $Y_g(i) \sim F_g$. Consider the alternative experiment that uses the same adaptive allocation rule but generates observations from these independent streams: when arm $g$ is selected for the $n$\textsuperscript{th} time, the observation is $Y_g(n)$. We show that the two experiments produce the same joint distribution of observed data.

We proceed by induction on rounds. For each $t \ge 1$, let
$\mathfrak{h}_t = (g_1, O_1, \ldots, g_t, O_t)$ denote the observed history
through round $t$, where $O_s$ is the observation at round $s$.
For the base case ($t=1$), note that the arm $g_1$ is $\mathcal{F}_0$-measurable. Under the original experiment the observation is
$X_{g_1}(1) \sim F_{g_1}$; under the alternative it is
$Y_{g_1}(1) \sim F_{g_1}$. The joint law of $\mathfrak{h}_1$ therefore agrees.

For the inductive step, suppose the joint law of $\mathfrak{h}_t$ is the
same under both experiments. Since $g_{t+1}$ is
$\mathcal{F}_t$-measurable, and $\mathcal{F}_t = \sigma(\mathfrak{h}_t)$,
$g_{t+1}$ has the same conditional distribution given $\mathfrak{h}_t$ under
both experiments. Given $\mathfrak{h}_t$ and $g_{t+1} = g$, let
$n = N_g(t) + 1$.
\begin{itemize}
    \item \emph{Original experiment.} The observation is $X_g(t+1)$,
    the $g$\textsuperscript{th} component of $X(t+1)$. Because the
    vectors $X(1), X(2), \ldots$ are i.i.d.\ (\Cref{assn:dgm}),
    $X(t+1)$ is independent of $X(1), \ldots, X(t)$, and hence
    $X_g(t+1)$ is independent of $\mathfrak{h}_t$, with marginal distribution~$F_g$.
    \item \emph{Alternative experiment.} The observation is $Y_g(n)$.
    By construction, $\{Y_g(i)\}_{i \ge 1}$ is i.i.d.\ $F_g$ and
    independent of all other randomness. Since $n$ is determined
    by $H_t$, the variable $Y_g(n)$ is independent of $\mathfrak{h}_t$,
    with distribution~$F_g$.
\end{itemize}
In both cases, $(g_{t+1}, O_{t+1})$ has the same conditional
distribution given $\mathfrak{h}_t$. Since $\mathfrak{h}_t$ itself has the same joint
law under both experiments, so does $\mathfrak{h}_{t+1}$.
\end{proof}

\subsubsection{Concentration}

We begin by stating some convenient moment bounds that follow from our \Cref{assn:sub-g}.
\begin{lemma}[Moment bounds]\label{lem:subg-to-bernstein}
Let $\xi_1, \xi_2, \ldots, \xi_n$ be a martingale difference sequence adapted to $\mathcal{F}_0, \mathcal{F}_1, \ldots, \mathcal{F}_n$, satisfying $\mathbb{E}\{\xi_i^2 \mid \mathcal{F}_{i-1}\} = 1$. If for some $\nu \ge 1$, all $i = 1,\ldots,n$, and all $\lambda \in \mathbb{R}$,
\[
    \log\mathbb{E}[e^{\lambda\xi_i}|\mathcal{F}_{i-1}] \leq \frac{\lambda^2\nu^2}{2}, \qquad \mathbb{E}[\xi_i^2|\mathcal{F}_{i-1}] = 1,
\]
then there exists a constant $C_\nu = O(\nu^3)$ depending only on $\nu$ such that for all $k \geq 3$,
\[
    \mathbb{E}[|\xi_i|^k|\mathcal{F}_{i-1}] \leq 2^{k/2}k\Gamma(k/2)\nu^{k} \le \frac{k!}{2}\,C_\nu^{k-2}.
\]
\end{lemma}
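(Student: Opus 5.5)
The plan is to pass from the conditional sub-Gaussian moment generating function bound to a conditional tail bound, and then integrate the tail. Fix $i$ and condition on $\mathcal{F}_{i-1}$; all statements below hold almost surely and conditionally. By the Chernoff bound applied to $\pm\xi_i$, for every $x>0$,
\[
\mathbb{P}\{|\xi_i|>x \mid \mathcal{F}_{i-1}\} \le 2\exp\{-x^2/(2\nu^2)\}.
\]
Using $\mathbb{E}[|\xi_i|^k\mid\mathcal{F}_{i-1}] = \int_0^\infty k x^{k-1}\,\mathbb{P}\{|\xi_i|>x\mid\mathcal{F}_{i-1}\}\,dx$ and substituting $u = x^2/(2\nu^2)$, this gives
\[
\mathbb{E}[|\xi_i|^k\mid\mathcal{F}_{i-1}] \le 2k\int_0^\infty x^{k-1}e^{-x^2/(2\nu^2)}\,dx = k\,(2\nu^2)^{k/2}\,\Gamma(k/2) = 2^{k/2}k\Gamma(k/2)\nu^k .
\]
This is the first inequality. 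Regular conditional distributions exist because $\xi_i$ is real valued, so the conditional integration is legitimate.

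For the second inequality I need a constant $C_\nu$ with $2^{k/2}k\Gamma(k/2)\nu^k \le \tfrac{k!}{2}C_\nu^{k-2}$ for all $k\ge3$. The first step is to show $k\Gamma(k/2) \le c^{k}\, k!/2$ for an absolute $c$. Since $\Gamma(k/2)\le \Gamma(k)=(k-1)!$ for $k\ge 3$ (as $\Gamma$ is increasing on $[3/2,\infty)$ and $k/2\le k$), we get $k\Gamma(k/2)\le k!$, so the left side is at most $2^{k/2}\nu^k k!$. It then suffices to choose $C_\nu$ with $2\cdot 2^{k/2}\nu^k \le C_\nu^{k-2}$ for all $k\ge 3$, that is,
\[
2^{1+k/2}\nu^k \le C_\nu^{k-2} \quad\text{for all } k\ge 3.
\]
Rewrite the left side as $(\sqrt2\,\nu)^{k-2}\cdot 4\nu^2$. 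Taking $C_\nu = \sqrt2\,\nu\cdot 4\nu^2 = 4\sqrt2\,\nu^3$ works: since $4\nu^2\ge 1$ and $k-2\ge1$, we have $(4\nu^2)^{k-2}\ge 4\nu^2$, hence $C_\nu^{k-2}=(\sqrt2\nu)^{k-2}(4\nu^2)^{k-2}\ge(\sqrt2\nu)^{k-2}\cdot4\nu^2$. This yields $C_\nu = O(\nu^3)$, matching the statement.

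The only delicate point is arranging the exponent bookkeeping so that a constant polynomial in $\nu$ absorbs the factor $\nu^k$ against $C_\nu^{k-2}$ uniformly in $k$. This is handled by the $k-2\ge1$ trick above together with $\nu\ge1$. The assumption $\mathbb{E}[\xi_i^2\mid\mathcal{F}_{i-1}]=1$ is not needed for the bound itself; it only matches the normalization under which the lemma is later applied, via Bernstein-type inequalities for martingales.
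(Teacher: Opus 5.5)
Your proof is correct and follows the paper's route: a conditional Chernoff tail bound, integration of the tail to obtain $2^{k/2}k\Gamma(k/2)\nu^k$, and then absorption into the Bernstein form. The only difference is in the last step, and it is minor. The paper uses the sharper bound $k\Gamma(k/2)\le k!/2$, together with $2^{k/2}\le(\sqrt{8})^{k-2}$ and $\nu^k\le\nu^{3(k-2)}$, to get $C_\nu=\sqrt{8}\,\nu^3$; your cruder $k\Gamma(k/2)\le k!$ costs a factor of $2$ and gives $C_\nu=4\sqrt{2}\,\nu^3$, which is still $O(\nu^3)$.
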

\begin{proof}
Combining the stated logarithmic moment-generating function bound with Markov's inequality gives sub-Gaussian concentration: $\mathbb{P}\{|\xi_i| > t \mid \mathcal{F}_{i-1}\} \le 2e^{-t^2/(2\nu^2)}$. We can integrate the tail bound to estimate, for any $k \ge 3$,
\[\mathbb{E}_{i-1}|\xi_i|^{k} = \int_0^\infty \mathbb{P}_{i-1}(|\xi_i|^{k} > t)\,dt =  \int_0^\infty \mathbb{P}_{i-1}(|\xi_i| > u)\,ku^{k-1}\,du \le 2k \int_0^\infty u^{k-1}e^{-u^2/(2\nu^2)}\,du,\] giving the moment bound $\mathbb{E}\{|\xi_i|^{k}\mid \mathcal{F}_{i-1}\} \le 2^{k/2}k\Gamma(k/2)\nu^{k}$. Finally, for $k \ge 3$ it holds that $k \Gamma(k/2) \le k!/2$ and $2^{k/2} \le (\sqrt 8)^{k-2}$, so we may take $C_\nu = \sqrt{8}\nu^3$.
\end{proof}


\begin{lemma}
Suppose $\Delta$ is a random variable such that $\mathbb{E}[\Delta^2] = \sigma^2$ and  $\log\mathbb{E}e^{\lambda\Delta} \le \nu^2\sigma^2\lambda^2/2$ for all $\lambda \in \mathbb{R}$. 
Define $K_4(\nu)$ as the largest $M \ge 1$ satisfying
\begin{equation}
    \cosh(\lambda M^{1/4}) - \frac{\lambda^2}{2}\bigl(M^{1/2} - 1\bigr) \;\le\; \exp(\nu^2\lambda^2/2) \qquad \forall \lambda > 0, \label{eq:jensen-implicit}
\end{equation}
and set $\kappa(\nu) \coloneqq K_4(\nu) - 1$. Then:
\[\mathbb{E}[\Delta^4] \le K_4(\nu)\sigma^4; \qquad \mathrm{Var}(\Delta^2) \le \kappa(\nu)\sigma^4 .\]
Finally, it holds that $K_4(\nu)/\nu^4 \le 6.8$ for all $\nu \ge 1$.
\end{lemma}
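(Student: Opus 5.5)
The plan is to normalize, expand $\cosh$ in a power series, and apply Jensen's inequality to a convex function of $\Delta^4$. This reproduces exactly the implicit inequality \eqref{eq:jensen-implicit} with $M=\mathbb{E}[\Delta^4]/\sigma^4$.

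\textbf{Reduction.} By homogeneity (replace $\Delta$ by $\Delta/\sigma$) we may take $\sigma=1$. The sub-Gaussian bound $\log\mathbb{E}e^{\lambda\Delta}\le\nu^2\lambda^2/2$ for all $\lambda$ forces $\mathbb{E}\Delta=0$: differentiate at $\lambda=0$, where both sides vanish and the right side has zero derivative. Write $m_4=\mathbb{E}\Delta^4$. By Jensen, $m_4\ge(\mathbb{E}\Delta^2)^2=1$. Since $\mathrm{Var}(\Delta^2)=m_4-1$, the second claim follows from the first with $\kappa=K_4-1$.

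\textbf{Main step.} For fixed $\lambda>0$, define on $y\ge0$
\[
f_\lambda(y)=\cosh(\lambda y^{1/4})-\tfrac{\lambda^2}{2}y^{1/2}=1+\sum_{j\ge2}\frac{\lambda^{2j}y^{j/2}}{(2j)!}.
\]
Each summand $y^{j/2}$ with $j\ge2$ is convex and increasing, so $f_\lambda$ is convex and increasing. Jensen applied to $y=\Delta^4$ gives $f_\lambda(m_4)\le\mathbb{E}f_\lambda(\Delta^4)$. Evaluating the right side,
\[
\mathbb{E}f_\lambda(\Delta^4)=\mathbb{E}\cosh(\lambda\Delta)-\tfrac{\lambda^2}{2}\mathbb{E}\Delta^2=\tfrac12\bigl(\mathbb{E}e^{\lambda\Delta}+\mathbb{E}e^{-\lambda\Delta}\bigr)-\tfrac{\lambda^2}{2}\le e^{\nu^2\lambda^2/2}-\tfrac{\lambda^2}{2}.
\]
Rearranging yields $\cosh(\lambda m_4^{1/4})-\tfrac{\lambda^2}{2}(m_4^{1/2}-1)\le e^{\nu^2\lambda^2/2}$ for all $\lambda>0$, so $M=m_4$ satisfies \eqref{eq:jensen-implicit}.

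\textbf{Identifying $K_4$.} Because the left side of \eqref{eq:jensen-implicit} equals $f_\lambda(M)+\lambda^2/2$, it is nondecreasing in $M$ for every $\lambda$. Hence the set of admissible $M\ge1$ is an interval $[1,K_4(\nu)]$, closed by continuity. It is nonempty, since $M=1$ reduces to $\cosh\lambda\le e^{\lambda^2/2}$. It is bounded, because the $\cosh$ term eventually dominates for fixed $\lambda$. Therefore $m_4\le K_4(\nu)$.

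\textbf{Numerical bound (main obstacle).} To show $K_4(\nu)\le6.8\nu^4$, by monotonicity it suffices to exhibit, for each $\nu\ge1$, a $\lambda$ at which $M=6.8\nu^4$ violates \eqref{eq:jensen-implicit}.

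Take $\lambda=x/\nu$ and write $a=6.8^{1/4}\approx1.615$. The left side becomes $\cosh(ax)-\tfrac{x^2}{2}(\sqrt{6.8}-\nu^{-2})$, which is at least $\cosh(ax)-\tfrac{\sqrt{6.8}}{2}x^2$ uniformly in $\nu\ge1$. The right side is $e^{x^2/2}$, independent of $\nu$.

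At $x=2$ the left side is $\cosh(3.23)-5.22\approx7.44$, while $e^2\approx7.39$, so the inequality is violated. The margin is thin, so the delicate part is verifying this numerically with rigorous rounding, or choosing a slightly better $x$ near $2$.
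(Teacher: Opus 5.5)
Your proposal is correct and follows the paper's proof essentially step for step. The paper applies Jensen termwise to each map $x\mapsto x^{k/2}$ in the series for $\mathbb{E}\cosh(\lambda Y)$, which your single convex function $f_\lambda$ simply packages into one application, and it verifies the numerical bound with the same choice $M=6.8\nu^4$, $\lambda=2/\nu$ (your extra remarks on why $K_4(\nu)$ is well defined fill in details the paper leaves implicit).
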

\begin{proof}
Write $Y \coloneqq \Delta/\sigma$, so that $\mathbb{E}Y^2 = 1$ and $\log\mathbb{E}e^{\lambda Y} \le \nu^2\lambda^2/2$. By symmetry of the MGF condition,
\[
    \mathbb{E}\cosh(\lambda Y) \;=\; \sum_{k=0}^\infty \frac{\lambda^{2k}}{(2k)!}\,\mathbb{E}Y^{2k} \;\le\; \exp(\nu^2\lambda^2/2)
\]
for all $\lambda \in \mathbb{R}$. For each $k \ge 2$, the function $x \mapsto x^{k/2}$ is convex on $[0,\infty)$, so Jensen's inequality applied to $X = Y^4$ gives $\mathbb{E}Y^{2k} = \mathbb{E}(Y^4)^{k/2} \ge (\mathbb{E}Y^4)^{k/2}$. Lower-bounding the Taylor series termwise, with $M \coloneqq \mathbb{E}Y^4$ and $\mathbb{E}Y^2 = 1$,
\[
    \mathbb{E}\cosh(\lambda Y_i) \;\ge\; 1 + \frac{\lambda^2}{2} + \sum_{k=2}^\infty \frac{\lambda^{2k}}{(2k)!}\,M^{k/2} \;=\; \cosh(\lambda M^{1/4}) - \frac{\lambda^2}{2}\bigl(M^{1/2} - 1\bigr),
\]
where the last equality uses $\sum_{k=2}^\infty u^{2k}/(2k)! = \cosh(u) - 1 - u^2/2$ at $u = \lambda M^{1/4}$. Combining with the MGF upper bound,
\begin{equation}
    \cosh(\lambda M^{1/4}) - \frac{\lambda^2}{2}\bigl(M^{1/2} - 1\bigr) \;\le\; \exp(\nu^2\lambda^2/2) \qquad \forall \lambda > 0, \label{eq:fourth-moment-bound}
\end{equation}
which is the implicit characterization of $K_4(\nu)$ in \eqref{eq:jensen-implicit}. Hence $\mathbb{E}\Delta^4 = \sigma^4\,\mathbb{E}Y^4 \le K_4(\nu)\,\sigma^4$, and $\mathrm{Var}(\Delta^2) = \mathbb{E}\Delta^4 - \sigma^4$. 

Finally, note that the left-hand side of \cref{eq:fourth-moment-bound} is increasing in $M$ while the right-hand side is constant. Thus, to upper-bound $K_4(\nu)$, it suffices to find some $\lambda$ and $M$ such that the inequality is violated. This is accomplished by taking $M = 6.8 \nu^4$ and $\lambda = 2/\nu$. 
\end{proof}
To establish consistency of the estimated mean $\hat{\mu}_g$ and variance $\hat{\sigma}^2_g$ for each arm $g$, we provide a self-contained argument using basic concentration tools.  
We refer the reader to \citet{pena2004self} and \citet{howard2020time} for many generalizations, refinements and extensions.

\begin{lemma}\label{lemma:iid-concentration}
Suppose $\{\Delta_1, \Delta_2, \dots\}$ is an i.i.d.~sequence with $\mathbb{E}\Delta_i = 0$, $\mathbb{E}[\Delta_i^2] = \sigma^2$ and  $\log\mathbb{E}e^{\lambda\Delta_i} \le \nu^2\sigma^2\lambda^2/2$ for all $\lambda \in \mathbb{R}$. Then:
 \begin{align}
      \mathbb{P}\left(\left|\sum_{i=1}^n \frac{\Delta_i}{\sigma} \right| \ge \nu \sqrt{2nx} \right) &\le 2e^{-x} \label{eq:concentration-standardized-mean};
      \\
      \mathbb{P}\left(\sum_{i=1}^n \frac{\Delta_i^2}{\sigma^2} - n \ge \nu^2\left\{ \sqrt{32nx} + 2x\right\}\right)
        &\le e^{-x}
        \label{eq:concentration-standardized-cov-ub};
        \\
      \mathbb{P}\left(n - \sum_{i=1}^n \frac{\Delta_i^2}{\sigma^2} \ge \frac{x}{3} + \sqrt{2\kappa(\nu)nx + \frac{x^2}{9}}  \right)
        &\le e^{-x}
        \label{eq:concentration-standardized-cov-lb}.
 \end{align}
 In particular, combining these yields for a universal constant $c > 0$
 \begin{equation}
       \mathbb{P}\left(\left|\sum_{i=1}^n \frac{\Delta_i^2}{\sigma^2} - n\right| \ge \delta n\right)
        \le 2\exp\left\{-c n\left(\frac{\delta^2}{2\nu^4} \wedge \frac{\delta}{\nu^2} \right)\right\}
        \label{eq:concentration-standardized-cov}.
 \end{equation}
\end{lemma}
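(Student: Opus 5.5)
We prove the three tail bounds separately with Chernoff-type arguments and then combine them to get \eqref{eq:concentration-standardized-cov}. Throughout we write $Y_i = \Delta_i/\sigma$, so the $Y_i$ are i.i.d.\ with $\mathbb{E}Y_i=0$, $\mathbb{E}Y_i^2=1$ and $\log\mathbb{E}e^{\lambda Y_i}\le \nu^2\lambda^2/2$.

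\textbf{Step 1: the mean bound \eqref{eq:concentration-standardized-mean}.} By independence, $\log\mathbb{E}e^{\lambda\sum_i Y_i}\le n\nu^2\lambda^2/2$. Markov's inequality with the optimal choice $\lambda = \sqrt{2x/n}/\nu$ gives the one-sided tail $e^{-x}$ at level $\nu\sqrt{2nx}$. Applying the same argument to $-Y_i$ and taking a union bound gives the factor $2$.

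\textbf{Step 2: the upper tail \eqref{eq:concentration-standardized-cov-ub}.} Here $Y_i^2 - 1$ is sub-exponential, and the plan is to bound its moment generating function through its moments.
\begin{itemize}
\item Start from the sub-Gaussian tail $\mathbb{P}(|Y_i|>u)\le 2e^{-u^2/(2\nu^2)}$.
\item As in Lemma~\ref{lem:subg-to-bernstein}, this gives $\mathbb{E}Y_i^{2p}\le 2\,p!\,(2\nu^2)^p$.
\item Expanding the exponential, $\mathbb{E}e^{\lambda(Y_i^2-1)}\le \exp\{c\lambda^2\nu^4\}$ for $0\le\lambda\le 1/(4\nu^2)$. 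Equivalently, Bernstein's moment condition holds for $Y_i^2-1$ with variance proxy of order $16\nu^4$ and scale $2\nu^2$.
\item Bernstein's inequality in the form $\mathbb{P}(\sum_i W_i\ge \sqrt{2vx}+bx)\le e^{-x}$, with $v=16n\nu^4$ and $b=2\nu^2$, then yields the stated threshold $\nu^2\{\sqrt{32nx}+2x\}$.
\end{itemize}
The main obstacle is the constant bookkeeping: $v$ and $b$ must come out exactly so as to produce $\sqrt{32}$ and $2$. If the clean expansion loses a constant, I would instead use the moment bound $\mathbb{E}|Y_i^2-1|^p\le \tfrac{p!}{2}v b^{p-2}$ directly, handling the centering with $(a+b)^p\le 2^{p-1}(a^p+b^p)$, and adjust if necessary.

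\textbf{Step 3: the lower tail \eqref{eq:concentration-standardized-cov-lb}.} For this side we use that $W_i = 1 - Y_i^2 \le 1$ is bounded above.
\begin{itemize}
\item The previous lemma gives $\mathrm{Var}(W_i)=\mathrm{Var}(Y_i^2)\le\kappa(\nu)$.
\item Bernstein's inequality for variables bounded above by $1$ (a one-sided Bennett argument using $e^{u}\le 1+u+u^2/\{2(1-u/3)\}$ for $u\le 3$ applied to $\lambda W_i$) gives $\mathbb{P}(\sum_i W_i\ge t)\le \exp\{-t^2/(2(n\kappa+t/3))\}$.
\item Setting this equal to $e^{-x}$ and solving the quadratic $t^2 - (2x/3)t - 2n\kappa x = 0$ gives $t = x/3+\sqrt{2\kappa n x + x^2/9}$, exactly as stated.
\end{itemize}

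\textbf{Step 4: combining to get \eqref{eq:concentration-standardized-cov}.} We show each side separately has probability at most $\exp\{-cn(\delta^2/(2\nu^4)\wedge\delta/\nu^2)\}$.
\begin{itemize}
\item \emph{Upper side.} Choose $x = cn(\delta^2/\nu^4\wedge \delta/\nu^2)$ with $c$ small. Then $\sqrt{32nx}\le \delta n/(2\nu^2)$ and $2x\le \delta n/(2\nu^2)$, so the threshold in \eqref{eq:concentration-standardized-cov-ub} is at most $\delta n$.
\item \emph{Lower side.} The previous lemma gives $\kappa(\nu)\le 6.8\nu^4$, so the same choice of $x$ makes $x/3+\sqrt{2\kappa nx+x^2/9}\le\delta n$, using $\nu\ge1$ to absorb $x/3$ into the $\delta/\nu^2$ branch.
\end{itemize}
A union bound over the two sides gives the factor $2$, with $c$ universal.
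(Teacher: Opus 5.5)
Your proposal is correct and follows essentially the same route as the paper. The route is: a sub-Gaussian Chernoff bound for the mean; a sub-gamma MGF bound for $Y_i^2-1$ built from the moments $\mathbb{E}Y_i^{2p}\le 2^{p+1}p!\,\nu^{2p}$; Bernstein's inequality for $W_i=1-Y_i^2\le 1$ with $\mathrm{Var}(W_i)\le\kappa(\nu)$; and a union bound using $\kappa(\nu)\le 6.8\nu^4$. The constant bookkeeping you flag in Step 2 works out exactly if you keep the full range $0\le\lambda<1/(2\nu^2)$ rather than $\lambda\le 1/(4\nu^2)$, and use $e^{-\lambda}(1+\lambda)\le 1$ to absorb the centering. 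This gives $\log\mathbb{E}e^{\lambda(Y_i^2-1)}\le 8\nu^4\lambda^2/(1-2\nu^2\lambda)$, i.e.\ variance factor $16\nu^4$ and scale $2\nu^2$, which is precisely how the paper does it.
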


\begin{proof}
\Cref{eq:concentration-standardized-mean} is the standard concentration inequality for sub-Gaussian variables \cite[Ch.~2]{boucheron2013concentration}. For \cref{eq:concentration-standardized-cov-ub}, put $Y_i = \Delta_i/\sigma$. Applying \Cref{lem:subg-to-bernstein} with $\xi_i = Y_i$ and $\mathcal{F}_{i-1}$ trivial gives the moment bound $\mathbb{E}|Y_i|^{2k} \le 2^{k+1}k!\nu^{2k}$. It follows that, for $0 \le \lambda \le 1/(2\nu^2)$, we have
\[\mathbb{E}[e^{\lambda(Y_i^2-1)}] \le e^{-\lambda}\left[1 + \lambda + \sum_{k \ge 2} 2^{k+1}\lambda^k\nu^{2k} \right] \le  e^{-\lambda}\left[1 + \lambda + \frac{8\nu^4\lambda^2}{1 - 2\nu^2\lambda}\right] \le \left[1  + \frac{8\nu^4\lambda^2}{1 - 2\nu^2\lambda}\right].\]
Since $\log(1+u) \le u$, this yields the bound \[ \log \mathbb{E}[e^{\lambda(Y_i^2-1)}] \le \frac{8\nu^4\lambda^2}{1 - 2\nu^2\lambda}.\]
Standard concentration for sub-gamma variables \cite[Ch.~2]{boucheron2013concentration} then gives \eqref{eq:concentration-standardized-cov-ub}.

For \eqref{eq:concentration-standardized-cov-lb}, set $W_i \coloneqq \sigma^2 - \Delta_i^2$, which satisfies $W_i \le \sigma^2$ a.s.\ (since $\Delta_i^2 \ge 0$), $\mathbb{E} W_i = 0$, and $\mathrm{Var}(W_i) = \mathbb{E}\Delta_i^4 - \sigma^4 \le \kappa(\nu)\sigma^4$ by \eqref{eq:fourth-moment-bound}. Bennett's inequality (in the Bernstein form, \citealp[Ch.~2]{boucheron2013concentration}) applied to $\sum_i W_i$ with $b = \sigma^2$ gives  \eqref{eq:concentration-standardized-cov-lb}. For \eqref{eq:concentration-standardized-cov}, we combine \cref{eq:concentration-standardized-cov-ub,eq:concentration-standardized-cov-lb} with a union bound and absorb constants, using $\kappa(\nu) \le 6.8\nu^4$. 
\end{proof}

    In \Cref{lemma:normalized-bound-mean} we give a crude time-uniform concentration inequality that is sufficient for our analysis. In effect, it accounts for dependence in strategic, sequentialy-sampled data by taking a simple union bound over prefixes. 

\begin{lemma}\label{lemma:normalized-bound-mean}
With probability at least $1-\eta$,
\[
    \left|\frac{1}{\sqrt{N_g}} \sum_{t \in T_g} \frac{X_g(t) - \mu_g}{\sigma_g}\right| \le \max_{1 \le q \le T} \left|\frac{1}{\sqrt{q}} \sum_{t=1}^q \frac{X_g(t) - \mu_g}{\sigma_g}\right|
    \lesssim \nu\sqrt{\log(T/\eta)}.
\]
\end{lemma}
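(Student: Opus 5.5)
We prove the two inequalities of Lemma~\ref{lemma:normalized-bound-mean} separately. The first one is deterministic once we pass to the representation of Lemma~\ref{lemma:iid-representation}. The second one is a union bound over $T$ fixed prefixes.

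\emph{First inequality.} By Lemma~\ref{lemma:iid-representation}, the observed data have the same joint law as in an experiment where arm $g$ draws from its own i.i.d.\ stream $Y_g(1), Y_g(2), \ldots$ with law $F_g$. In that experiment, the $q$-th pull of arm $g$ returns $Y_g(q)$. Accordingly, I interpret the sum $\sum_{t=1}^q (X_g(t)-\mu_g)/\sigma_g$ on the right-hand side as the $q$-th partial sum of the standardized stream of arm $g$. This is the natural reading, since the statement is about arm $g$'s own sequence.

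The sum over $T_g$ is then exactly the $N_g$-th partial sum of that stream. Because $1 \le N_g \le T$, almost surely by Assumption~\ref{assn:sample-once}, the normalized sum over $T_g$ is one of the terms in the maximum over $q$. So it is bounded by the maximum. The point to note is that $N_g$ is a random index depending on the data, but this does not matter: the bound holds pathwise, and the maximum over all $q$ covers whatever value $N_g$ takes.

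\emph{Second inequality.} Fix $q \in \{1,\dots,T\}$. Apply \eqref{eq:concentration-standardized-mean} of Lemma~\ref{lemma:iid-concentration} with $n=q$, $\Delta_i = Y_g(i)-\mu_g$, and $x = \log(2T/\eta)$. By Assumption~\ref{assn:sub-g} these $\Delta_i$ are i.i.d., mean zero, have variance $\sigma_g^2$, and satisfy the required sub-Gaussian bound. This gives
\[
\mathbb{P}\left( \left|\frac{1}{\sqrt q}\sum_{i=1}^q \frac{\Delta_i}{\sigma_g}\right| \ge \nu\sqrt{2\log(2T/\eta)} \right) \le \frac{\eta}{T}.
\]
A union bound over the $T$ values of $q$ shows that, with probability at least $1-\eta$, the maximum is at most $\nu\sqrt{2\log(2T/\eta)}$.

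For $\eta \le 1$ and $T \ge 2$, we have $\log(2T/\eta) \le 2\log(T/\eta)$. The bound is therefore $\lesssim \nu\sqrt{\log(T/\eta)}$. If the case $T=1$ is needed, it can be handled by adjusting the absolute constant, or by reading $\log$ as $\log(\cdot)\vee 1$.

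\emph{Main point of care.} No step is hard. The one thing to get right is the justification for replacing the adaptively selected sum by a fixed-prefix sum of an i.i.d.\ stream. This is exactly where Lemma~\ref{lemma:iid-representation} is used. The resulting probability bound then holds for the original experiment because the two experiments have the same joint law.
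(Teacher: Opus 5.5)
Your proposal is correct and follows the paper's proof essentially step for step. Both arguments pass to the independent-streams representation of Lemma~\ref{lemma:iid-representation}, bound the adaptively indexed sum by the maximum over the $T$ fixed prefixes, and control that maximum with a union bound using \eqref{eq:concentration-standardized-mean}. Your explicit choice $x=\log(2T/\eta)$, and your remark that the first inequality holds pathwise only in the coupled representation and transfers to the original experiment through equality of laws, are simply a more careful write-up of the same argument.
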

\begin{proof}
By the equivalent representation of \Cref{lemma:iid-representation} and a union bound, we have
\begin{align*}
    \mathbb{P}\left\{\left|\frac{1}{\sqrt{N_g}}\sum_{t \in T_g} \frac{X_g(t) - \mu_g}{\sigma_g}\right| > u\right\}
    &\le \mathbb{P}\left\{\max_{1 \le q \le T} \left|\frac{1}{\sqrt{q}} \sum_{t=1}^q \frac{X_g(t) - \mu_g}{\sigma_g}\right|> u\right\} \\
    &\le \sum_{q=1}^T\mathbb{P}\left\{\left|\frac{1}{\sqrt{q}}\sum_{t=1}^q \frac{X'_g(t) - \mu_g}{\sigma_g}\right| > u \right\},
\intertext{where $X'_g(t)$ are i.i.d.~copies of $X_g(t)$ independent of $\mathcal{F}_\infty$. Using the two-sided inequality \eqref{eq:concentration-standardized-mean} from Lemma \ref{lemma:iid-concentration} with $\Delta_t = (X'_g(t) - \mu_g)/\sigma_g$, we get the bound}
    \mathbb{P}\left\{\left|\frac{1}{\sqrt{N_g}}\sum_{t \in T_g} \frac{X_g(t) - \mu_g}{\sigma_g}\right| > u \right\}  &\le 2Te^{-cu^2/\nu^2} \\
\implies   \mathbb{P}\left\{\left|\frac{1}{\sqrt{N_g}}\sum_{t \in T_g} \frac{X_g(t) - \mu_g}{\sigma_g}\right| > C\nu\sqrt{\log(2T/\eta)}\right\} &\le \eta
\end{align*}
for $C = c^{-1/2}$ sufficiently large.
\end{proof}


\begin{lemma}\label{lemma:normalized-bound-var} 
With probability at least $1-\eta$, 
\begin{equation}
    \begin{split}
    \left|\frac{1}{\sqrt{N_g}}\sum_{t \in T_g} \frac{[X_g(t)-\mu_g]^2 - \sigma^2_g}{\sigma^2_g}\right| 
    &\le \max_{1\le q \le t}\left|\frac{1}{\sqrt{q}}\sum_{t=1}^q \frac{[X_g(t)-\mu_g]^2 - \sigma^2_g}{\sigma^2_g}\right| \\
    &\lesssim \nu^2\left(\sqrt{\log(T/\eta)} + \log(1/\eta)\right) 
    \end{split}
    \label{eq:two-sided-variance-bound}
\end{equation}
and
\begin{equation}
    \frac{1}{\sqrt{N_g}}\sum_{t \in T_g} \frac{[X_g(t)-\mu_g]^2 - \sigma^2_g}{\sigma^2_g} \ge \min_{1 \le q \le T} \frac{1}{\sqrt{q}}\sum_{t =1}^q \frac{[X_g(t)-\mu_g]^2 - \sigma^2_g}{\sigma^2_g}  \gtrsim -\nu^2 \sqrt{\log(T/\eta)}\label{eq:lower-variance-bound}.
\end{equation}
\end{lemma}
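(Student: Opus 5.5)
The plan mirrors the proof of \Cref{lemma:normalized-bound-mean}, with the sub-Gaussian bound replaced by the sub-exponential bounds \eqref{eq:concentration-standardized-cov-ub}--\eqref{eq:concentration-standardized-cov-lb} of \Cref{lemma:iid-concentration}.

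\emph{Reduction to a fixed i.i.d.\ stream.} By \Cref{lemma:iid-representation}, we may realize the observations from arm $g$ as the first $N_g$ entries of an i.i.d.\ sequence $\{Y_g(i)\}_{i\ge1}$ with law $F_g$. Here $N_g$ is random but takes values in $\{1,\dots,T\}$, so
\[
\frac{1}{\sqrt{N_g}}\sum_{t\in T_g} W_g(t) \;=\; \frac{1}{\sqrt{N_g}}\sum_{i=1}^{N_g} W_i, \qquad W_i \coloneqq \frac{[Y_g(i)-\mu_g]^2-\sigma_g^2}{\sigma_g^2}.
\]
Consequently this quantity lies between $\min_{q\le T} q^{-1/2}\sum_{i\le q}W_i$ and $\max_{q\le T} q^{-1/2}\sum_{i\le q}W_i$. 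This gives the first inequality in both \eqref{eq:two-sided-variance-bound} and \eqref{eq:lower-variance-bound}.

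\emph{Fixed-$q$ bounds and union bound.} For each fixed $q\le T$, apply \eqref{eq:concentration-standardized-cov-ub} with $x=\log(2T/\eta)$ and $\Delta_i = Y_g(i)-\mu_g$. Dividing by $\sqrt q$, the upper deviation is at most $\nu^2\{\sqrt{32x}+2x/\sqrt q\}$ with probability at least $1-\eta/(2T)$.

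For the lower tail, \eqref{eq:concentration-standardized-cov-lb} together with $\kappa(\nu)\le 6.8\nu^4$ gives a deviation of at most $x/(3\sqrt q)+\sqrt{2\kappa(\nu)x + x^2/(9q)}$. Since $W_i\ge -1$ is bounded below, it is better to use the purely sub-Gaussian lower tail. Bennett/Bernstein for variables bounded on one side, via $\log\mathbb{E}e^{-\lambda(Y^2-1)}\le \lambda^2\mathbb{E}Y^4/2$ for $\lambda\ge0$, yields $\mathbb{P}(\sum_i (1-Y_i^2)\ge \sqrt{2K_4(\nu) q x})\le e^{-x}$ with no linear-in-$x$ term. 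This gives the lower deviation $\lesssim \nu^2\sqrt{\log(T/\eta)}$ uniformly in $q$, which is \eqref{eq:lower-variance-bound} after a union bound over $q\le T$.

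For the two-sided bound \eqref{eq:two-sided-variance-bound}, union bound over $q\le T$ and both tails. Bounding $2x/\sqrt q\le 2x$ gives a total of order $\nu^2(\sqrt{\log(T/\eta)}+\log(T/\eta))$.

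\emph{Main obstacle.} The stated bound has $\log(1/\eta)$ rather than $\log(T/\eta)$ in the linear term, so a naive union bound overshoots by an additive $\nu^2\log T$. I would fix this by splitting $q$. For $q\ge \log^2(T/\eta)$, the term $2x/\sqrt q$ is at most $2\sqrt{x}$ and is absorbed into the $\sqrt{\log(T/\eta)}$ term. For $q<\log^2(T/\eta)$, I would instead avoid the union bound by applying a maximal inequality (Doob/Ville on the exponential supermartingale $\exp\{\lambda S_q - q\psi(\lambda)\}$ with $\lambda=1/(4\nu^2)$). This controls $\max_{q\le Q}S_q/\sqrt q$ by $\lesssim \nu^2\sqrt Q\cdot(\text{const})+\nu^2\log(1/\eta)$ for $S_q=\sum_{i\le q}W_i$. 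Since $\sqrt Q$ is only $\log(T/\eta)$, I expect to settle for the weaker statement if this peeling does not close cleanly, noting that only the implied-constant form matters in later use.
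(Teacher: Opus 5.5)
Your reduction to a single i.i.d.\ stream and your proof of \eqref{eq:lower-variance-bound} are sound. The one-sided bound $\log\mathbb{E}e^{-\lambda(Y^2-1)}\le\lambda^2\mathbb{E}Y^4/2$ is in fact cleaner than the paper's appeal to \eqref{eq:concentration-standardized-cov-lb}, whose linear-in-$x$ term must be discarded separately (e.g.\ via $q^{-1/2}\sum_{i\le q}(1-Y_i^2)\le\sqrt q$).

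The gap is in \eqref{eq:two-sided-variance-bound}. As you suspect, you never obtain $\log(1/\eta)$ in place of $\log(T/\eta)$, and the Ville step cannot deliver it. With fixed $\lambda=1/(4\nu^2)$ the compensator $q\psi(\lambda)/\lambda\asymp\nu^2 q$ becomes $\nu^2\sqrt q$ after dividing by $\sqrt q$. On $q<Q=\log^2(T/\eta)$ this gives back exactly the $\nu^2\log(T/\eta)$ term you were trying to remove. So the proposal, as written, proves only the weaker statement. Your remark that the downstream uses only need the weaker form is accurate, since each application takes $\eta$ polynomially small in $T$, but that does not prove the lemma as stated.

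The missing idea is that the union bound over $q$ need not charge every index the same $\eta/T$. The paper fixes the threshold $u$ and sums tails. By \eqref{eq:concentration-standardized-cov} with $\delta=u/\sqrt q$, the $q$th term is at most $2\exp\{-c(u^2/\nu^4\wedge u\sqrt q/\nu^2)\}$.
\begin{itemize}
\item The sub-exponential branch is active only when $q\lesssim u^2/\nu^4$. There each term is at most $2e^{-cu/\nu^2}$, so these at most $u^2/\nu^4$ terms sum to $\lesssim e^{-c'u/\nu^2}$, with no factor of $T$.
\item The remaining indices contribute at most $2Te^{-cu^2/\nu^4}$.
\end{itemize}
Solving $Te^{-c'u^2/\nu^4}\vee e^{-c'u/\nu^2}\lesssim\eta$ gives $u\asymp\nu^2\bigl(\sqrt{\log(T/\eta)}+\log(1/\eta)\bigr)$.

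Your own split also closes with one change. On $q<Q$, union-bound over only those $Q$ indices at level $x'=\log(4Q/\eta)$. Then $2x'/\sqrt q\le 2\log(4/\eta)+4\log\log(T/\eta)\lesssim\log(1/\eta)+\sqrt{\log(T/\eta)}$, using $\log y\le\sqrt y$. A weighted allocation $\eta_q\propto\eta/q^2$ across $q$ works equally well.
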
 
\begin{proof}
By the equivalent representation of \Cref{lemma:iid-representation} and a union bound, we have
\begin{align*}
    \mathbb{P}\left\{\left|\frac{1}{\sqrt{N_g}}\sum_{t \in T_g} \frac{[X_g(t)-\mu_g]^2 - \sigma^2_g}{\sigma^2_g}\right| > u\right\} 
    &\le \sum_{q=1}^T\mathbb{P}\left\{\left|\frac{1}{\sqrt{q}}\sum_{t=1}^q \frac{[X'_g(t)-\mu_g]^2 - \sigma^2_g}{\sigma^2_g}\right| > u\right\}, \\
\intertext{where $X'_g(t)$ are i.i.d.~copies of $X_g(t)$ independent of $\mathcal{F}_\infty$. Using inequality \eqref{eq:concentration-standardized-cov} from Lemma \ref{lemma:iid-concentration} with $\Delta_t = X'_g(t)-\mu_g$, we get the bound}
    &\le \sum_{q=1}^T 2\exp\left\{-c\left( \frac{u^2}{\nu^4} \wedge\frac{u\sqrt{q}}{\nu^2} \right) \right\}.
\intertext{Noting that the term $u\sqrt{q}/\nu^2$ only appears when $q \le u^2/\nu^4$, this may be rewritten as}
    &\le 2T\exp\left\{-\frac{cu^2}{\nu^4}\right\} + \frac{u^2}{\nu^4}\exp\left\{-\frac{cu}{\nu^2}\right\}.
\intertext{Finally, using the fact $A + B \lesssim A \vee B$ and $x^2e^{-x} \le C' e^{-x/C'}$ for sufficiently large $C'$, we obtain the bound}
    &\lesssim T\exp\left\{-\frac{c'u^2}{\nu^4}\right\} \vee \exp\left\{-\frac{c'u}{\nu^2}\right\}.
\end{align*}
This gives 
\[ \mathbb{P}\left\{\left|\frac{1}{\sqrt{N_g}}\sum_{t \in T_g} \frac{[X_g(t)-\mu_g]^2 - \sigma^2_g}{\sigma^2_g} \right|> C'\nu^2\left[\sqrt{\log(C'T/\eta)} \vee \log(C'/\eta)\right] \right\} \le \eta\]
for $C'$ sufficiently large, proving \eqref{eq:two-sided-variance-bound}. Similarly using \eqref{eq:concentration-standardized-cov-lb} instead of \eqref{eq:concentration-standardized-cov} proves \eqref{eq:lower-variance-bound}.
\end{proof}


\subsubsection{Gaussian Approximation}

\begin{lemma}[Martingale CLT; Theorem 3.1 of \citealt{fan2013cramer}]\label{lem:martingale-clt}
Let $\xi_1, \xi_2, \ldots, \xi_n$ be a martingale difference sequence adapted to $\mathcal{F}_1, \mathcal{F}_2, \ldots, \mathcal{F}_n$, such that
\begin{enumerate}[label=(\roman*)]
    \item $\mathbb{E}[\xi_i^k|\mathcal{F}_{i-1}] \le \frac{1}{2}k!\epsilon^{k-2}\mathbb{E}[\xi_i^2|\mathcal{F}_{i-1}]$ for some $\epsilon \in (0,\frac{1}{2}]$ (Bernstein's condition), and
    \item $\left| \sum_{i=1}^n \mathbb{E}[\xi_i^2|\mathcal{F}_{i-1}] - 1\right| \le \delta$ for some $\delta \in [0,\frac{1}{2})$.
Then, 
\[
    \sup_{t \in \mathbb{R}}\left|\mathbb{P}\left\{\sum_{i=1}^n \xi_i \le t\right\} - \Phi(t)\right| \lesssim \delta + \epsilon\,|\log\epsilon|.
\]
\end{enumerate}
\end{lemma}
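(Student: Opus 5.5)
This is the cited martingale Berry--Esseen bound (Theorem 3.1 of \citealt{fan2013cramer}), so the plan is to reduce it to that source rather than re-derive a Cramér-type expansion. The route is in three steps.

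\textbf{Step 1: normalise the conditional variance.} I would first reduce to the exact case $\sum_i \mathbb{E}[\xi_i^2\mid\mathcal{F}_{i-1}]=1$. Append an auxiliary stopped sequence, for instance one or more independent centered Gaussian increments. Their conditional variances are chosen $\mathcal{F}$-predictably to absorb the defect $1-\sum_i\mathbb{E}[\xi_i^2\mid\mathcal{F}_{i-1}]$. If that defect is negative, first truncate the sum at the predictable time where the conditional variance reaches $1$, and scale the straddling increment.

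Gaussian increments satisfy Bernstein's condition (i) with a constant of order their standard deviation, hence with constant at most $\epsilon$. The extended sequence therefore still satisfies (i) with the same $\epsilon$.

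The distributional perturbation has second moment at most $\delta$. Combined with the Lipschitz property of $\Phi$, I expect this step to cost $O(\delta)$ in Kolmogorov distance, using an anti-concentration/smoothing argument: conditioning on the added Gaussian piece is cleaner than a Chebyshev bound, which would only give $\delta^{1/3}$.

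\textbf{Step 2: the exact-variance case.} With conditional variances summing exactly to one, I would invoke Fan--Grama--Liu's result. Their argument runs a change of measure $d\mathbb{Q}_\lambda\propto\prod_i e^{\lambda\xi_i}/\mathbb{E}[e^{\lambda\xi_i}\mid\mathcal{F}_{i-1}]$. Bernstein's condition controls the cumulants: the log-MGF is $\lambda^2V_i/2+O(\epsilon\lambda^3V_i)$ for $\lambda\lesssim\epsilon^{-1}$. For a bounded threshold one takes $\lambda=0$ and applies the Lindeberg/Bolthausen-type martingale Berry--Esseen bound, whose rate under (i) is $\epsilon|\log\epsilon|$. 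The logarithm comes from truncating at level $\epsilon\log(1/\epsilon)$, where the exponential-type tails from (i) make the truncation error $O(\epsilon)$.

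\textbf{Step 3: combine.} The triangle inequality then gives the bound $\delta+\epsilon|\log\epsilon|$.

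The main obstacle is Step 1. The random conditional variance must be handled without losing more than $O(\delta)$, and the stopping-time construction must keep predictability. If this becomes delicate, I would simply quote Fan et al.'s statement, which already includes the $\delta$ term (their condition $|\langle\xi\rangle_n-1|\le\delta^2$, which translates to our rate). The main task then reduces to matching hypotheses: that our one-sided version of (i) (upper bound on $\mathbb{E}\xi_i^k$) suffices, as it does in their theorem, and that the constants $\epsilon\le1/2$ and $\delta<1/2$ match.
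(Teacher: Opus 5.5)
Your proposal has a genuine gap in Step~1, and the same gap reappears in your fallback. Absorbing a \emph{random} variance defect of size at most $\delta$ does not cost $O(\delta)$ in Kolmogorov distance. The correct order is $\sqrt{\delta}$, and it is attained, so no smoothing argument can deliver the printed bound.

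Here is an example. Let $r_i$ be i.i.d.\ Rademacher, $\mathcal{F}_i=\sigma(r_1,\dots,r_i)$ and $S_m=n^{-1/2}\sum_{i\le m}r_i$. Let $\tau$ be the first $m\ge(1-\delta)n$ with $S_m=0$, capped at $(1+\delta)n$, and put $\xi_i=n^{-1/2}r_i\mathbf{1}\{i\le\tau\}$ for $i\le(1+\delta)n$.
\begin{itemize}
\item Since $\{i\le\tau\}\in\mathcal{F}_{i-1}$, this is a martingale difference sequence.
\item It satisfies (i) with $\epsilon=n^{-1/2}$.
\item It satisfies (ii), because $V:=\sum_i\mathbb{E}[\xi_i^2\mid\mathcal{F}_{i-1}]=\tau/n\in[1-\delta,1+\delta]$.
\end{itemize}
But $\sum_i\xi_i=S_\tau$ has an atom at $0$. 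Its mass is the probability that the walk revisits $0$ during a time window of relative length $2\delta$, which is $\asymp\sqrt{\delta}$ when $\delta n\to\infty$. Hence the Kolmogorov distance to $\Phi$ is $\gtrsim\sqrt{\delta}$. With $\delta=n^{-1/2}$ this is $n^{-1/4}$, whereas $\delta+\epsilon|\log\epsilon|\asymp n^{-1/2}\log n$.

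Your conditioning heuristic gives $O(\delta)$ only when the defect is deterministic; then you are comparing $N(0,1-\delta)$ with $N(0,1)$. When the defect $D$ is random and correlated with $\sum_i\xi_i$, the appended piece $\sqrt{D}\,G$ is a perturbation of $L^2$-size $\sqrt{\delta}$ whose scale is chosen by the past. In that case $\sqrt{\delta}$ is all you can get. Splitting $G$ into many small increments so that (i) keeps the constant $\epsilon$ is fine; that is not the issue.

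The fallback does not close the gap. As you note, Fan et al.\ assume $|V-1|\le\delta_n^2$ and conclude $\lesssim\delta_n+\epsilon|\log\epsilon|$. Under hypothesis (ii) as printed you can only take $\delta_n=\sqrt{\delta}$, which gives $\sqrt{\delta}+\epsilon|\log\epsilon|$, not the stated rate; it does not ``translate to our rate.'' So the lemma as printed misquotes the cited theorem---(ii) should read $\le\delta^2$---and, by the example above, it is false for $\delta>0$.

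The paper supplies no argument beyond the citation. Its only use of the lemma, Corollary~\ref{cor:subg-martingale-clt}, has $\delta=0$. There the discrepancy vanishes, and your fallback of quoting Fan et al.\ is exactly the paper's route.

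For the hypothesis matching in that case, do not just assert that the cited condition is one-sided. If it is stated with absolute values, derive it from (i) via Cauchy--Schwarz, $|\mathbb{E}[\xi_i^{2m+1}\mid\mathcal{F}_{i-1}]|\le(\mathbb{E}[\xi_i^{2m}\mid\mathcal{F}_{i-1}]\,\mathbb{E}[\xi_i^{2m+2}\mid\mathcal{F}_{i-1}])^{1/2}$. This yields the two-sided condition with $\epsilon$ replaced by $\sqrt{4/3}\,\epsilon$. The case $\sqrt{4/3}\,\epsilon>1/2$ is trivial, since the left-hand side is at most $1$.
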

\begin{corollary}\label{cor:subg-martingale-clt}
Let $X_1, X_2, \ldots X_n$ be a martingale difference sequence adapted to $\mathcal{F}_1, \mathcal{F}_2, \ldots, \mathcal{F}_n$, such that $\mathbb{E}[X_i^2|\mathcal{F}_{i-1}]= 1$, and such that $\log\mathbb{E}[e^{\lambda X_i}|\mathcal{F}_{i-1}] \leq {\lambda^2\nu^2}/{2}$ for all $\lambda \in \mathbb{R}$.
Then, there exists a universal constant $C_\nu > 0$ depending only upon $\nu$ such that
\[
    \sup_{t \in \mathbb{R}}\left|\mathbb{P}\left\{\frac{1}{\sqrt{n}}\sum_{i=1}^n X_i \le t\right\} - \Phi(t)\right| \le \frac{C_\nu (1\vee\log n)}{\sqrt{n}}.
\]
\end{corollary}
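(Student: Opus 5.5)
The plan is to apply Lemma~\ref{lem:martingale-clt} to the rescaled sequence $\xi_i = X_i/\sqrt{n}$. Condition (ii) then holds with $\delta = 0$, because $\sum_{i=1}^n \mathbb{E}[\xi_i^2\mid\mathcal{F}_{i-1}] = n\cdot(1/n) = 1$ exactly. All of the work therefore goes into checking Bernstein's condition (i) with $\epsilon \asymp C_\nu/\sqrt{n}$.

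For (i), we invoke Lemma~\ref{lem:subg-to-bernstein} with $\xi_i$ replaced by $X_i$. Its hypotheses are exactly ours: a conditional unit variance and the conditional sub-Gaussian mgf bound. It gives, for $k\ge 3$,
\[
\mathbb{E}[|X_i|^k\mid\mathcal{F}_{i-1}] \le \tfrac{k!}{2}\,C_\nu^{k-2}, \qquad C_\nu=\sqrt8\,\nu^3 .
\]
Dividing by $n^{k/2}$ and using $\mathbb{E}[\xi_i^2\mid\mathcal{F}_{i-1}]=1/n$, we obtain
\[
\mathbb{E}[\xi_i^k\mid\mathcal{F}_{i-1}] \le \mathbb{E}[|\xi_i|^k\mid\mathcal{F}_{i-1}] \le \tfrac{k!}{2}\Bigl(\tfrac{C_\nu}{\sqrt n}\Bigr)^{k-2}\cdot \tfrac1n = \tfrac{k!}{2}\,\epsilon^{k-2}\,\mathbb{E}[\xi_i^2\mid\mathcal{F}_{i-1}],
\]
with $\epsilon = C_\nu/\sqrt n$. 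The case $k=2$ is trivial.

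The only remaining wrinkle is the restriction $\epsilon\le 1/2$ in Lemma~\ref{lem:martingale-clt}. We handle it by cases.

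\emph{Case $n \ge 4C_\nu^2$.} Here $\epsilon\le 1/2$, and the lemma gives a Kolmogorov distance bounded by a universal multiple of
\[
\epsilon|\log\epsilon| = \frac{C_\nu}{\sqrt n}\Bigl(\tfrac12\log n - \log C_\nu\Bigr) \le \frac{C_\nu\,(1\vee \log n)}{\sqrt n},
\]
using $C_\nu\ge 1$ (since $\nu \ge 1$).

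\emph{Case $n < 4C_\nu^2$.} Here we use the trivial bound: the Kolmogorov distance is at most $1 \le 2C_\nu/\sqrt n$.

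Enlarging $C_\nu$ by a universal factor covers both cases at once. I do not expect a real obstacle. The one point needing care is that Lemma~\ref{lem:martingale-clt} is stated for $\xi_i^k$ rather than $|\xi_i|^k$, and bounding by the absolute moment handles this. Beyond that, we only need to keep the $\nu$-dependence absorbed into a single constant $C_\nu$ (polynomial in $\nu$), which is the form the statement asks for.
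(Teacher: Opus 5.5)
Your proposal is correct and follows the paper's proof essentially verbatim. You rescale to $\xi_i = X_i/\sqrt{n}$, take $\delta = 0$, obtain Bernstein's condition from Lemma~\ref{lem:subg-to-bernstein} with $\epsilon = C_\nu/\sqrt{n}$, and fall back on the trivial bound of $1$ when $\epsilon > 1/2$. Your explicit threshold $n \ge 4C_\nu^2$ just makes concrete the paper's unspecified $n_\nu$.
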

\begin{proof}[Proof of Corollary \ref{cor:subg-martingale-clt}]
We will apply \Cref{lem:martingale-clt} with $\xi_i = X_i/\sqrt{n}$, $\delta = 0$, and $\epsilon = C'_\nu /\sqrt{n}$. 
By construction, $\mathbb{E}[\xi_i^2|\mathcal{F}_{i-1}] = n^{-1}$, so condition (ii) of \Cref{lem:martingale-clt} is automatically met. 
It therefore suffices to check Bernstein's condition. 
By \Cref{lem:subg-to-bernstein} applied to $X_i$ (which satisfies $\mathbb{E}[X_i^2|\mathcal{F}_{i-1}]=1$ by assumption), we have $\mathbb{E}[|X_i|^k|\mathcal{F}_{i-1}] \leq \frac{k!}{2}C_\nu^{k-2}$ for all $k \geq 3$. Dividing both sides by $n^{k/2}$ and using $\xi_i = X_i/\sqrt{n}$ and $\mathbb{E}[\xi_i^2|\mathcal{F}_{i-1}] = 1/n$, we obtain
\begin{equation*}
    \mathbb{E}[|\xi_i|^k|\mathcal{F}_{i-1}] = \frac{\mathbb{E}[|X_i|^k|\mathcal{F}_{i-1}]}{n^{k/2}}
    \leq \frac{k!}{2}\frac{C_\nu^{k-2}}{n^{k/2}}
    = \frac{k!}{2}\left(\frac{C_\nu}{\sqrt{n}}\right)^{k-2} \cdot \frac{1}{n}
    = \frac{k!}{2}\left(\frac{C_\nu}{\sqrt{n}}\right)^{k-2}\mathbb{E}[\xi_i^2|\mathcal{F}_{i-1}].
\end{equation*}
It follows that whenever $\epsilon \le \frac{1}{2}$, we may take $\epsilon = C'_\nu/\sqrt n$ for $C'_\nu = C_\nu$ and apply \Cref{lem:martingale-clt} to deduce the claimed bound.

In turn,  $\epsilon \le \frac{1}{2}$ clearly holds for sufficiently large $n \ge n_\nu$, where $n_\nu$ depends only upon $\nu$. 
This proves the result for all $n \ge n_\nu$ with $C_\nu = C'_\nu$.
On the other hand, for $n \le n_\nu$, we can choose $C''_\nu = \max_{1 \leq k \leq n_\nu} \frac{\sqrt{k}}{1\vee\log k}$ to ensure $\frac{C''_\nu (1\vee\log n)}{\sqrt{n}} \geq 1$, making the claimed bound trivially true since the Kolmogorov distance is at most $1$.
\end{proof}

We also state two lemmas that help us derive bounds on the Kolmogorov distance between our statistic and the standard normal distribution. 

\begin{lemma}[Lemma 2.1 of \citealp{chernozhukov2016empirical}]\label{lemma:coupling-to-kolmogorov}     
Suppose that $\mathbb{P}(|X-Y| > \nu) \le \eta$. Then 
\[
    \sup_{t \in \mathbb{R}}\left|\mathbb{P}(X \le t) - \mathbb{P}(Y \le t) \right| \le \eta + \sup_{t \in \mathbb{R}} \mathbb{P}(|Y - t| \le \nu). 
\]
If $Y$ has a standard Gaussian distribution then the bound simplifies to $\eta + 2\nu$.
\end{lemma}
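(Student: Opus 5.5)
The plan is a direct sandwich argument for every threshold $t$. Fix $t \in \mathbb{R}$ and write $\epsilon$ for the coupling radius, to avoid overloading the $\nu$ of \Cref{assn:sub-g}. Let $E = \{|X-Y| \le \epsilon\}$, so that $\mathbb{P}(E^c) \le \eta$.

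For the upper bound, note that on $E$ the event $\{X \le t\}$ implies $\{Y \le t+\epsilon\}$. Therefore
\[
\mathbb{P}(X \le t) \le \mathbb{P}(Y \le t+\epsilon) + \eta = \mathbb{P}(Y \le t) + \mathbb{P}(t < Y \le t+\epsilon) + \eta .
\]
For the lower bound, on $E$ the event $\{Y \le t-\epsilon\}$ implies $\{X \le t\}$. Hence
\[
\mathbb{P}(X \le t) \ge \mathbb{P}(Y \le t-\epsilon) - \eta = \mathbb{P}(Y \le t) - \mathbb{P}(t-\epsilon < Y \le t) - \eta .
\]
Both correction terms are bounded by $\mathbb{P}(|Y-t| \le \epsilon)$, so $|\mathbb{P}(X\le t)-\mathbb{P}(Y\le t)| \le \eta + \mathbb{P}(|Y-t|\le\epsilon)$. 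Taking the supremum over $t$ gives the first claim.

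For the Gaussian case, $\mathbb{P}(|Y-t|\le\epsilon) = \int_{t-\epsilon}^{t+\epsilon}\varphi(s)\,ds \le 2\epsilon\,\varphi(0) = 2\epsilon/\sqrt{2\pi} \le 2\epsilon$, which gives $\eta + 2\epsilon$. No step is a real obstacle. The only care needed is in the boundary conventions: strict versus weak inequalities at $t\pm\epsilon$. These are absorbed because the anti-concentration term uses the closed interval.
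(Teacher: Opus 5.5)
Your proof is correct and follows essentially the same route as the paper. The paper cites Lemma 2.1 of \citet{chernozhukov2016empirical} for the first inequality; the standard proof of that lemma is exactly your sandwich on the event $\{|X-Y|\le\epsilon\}$, with the two half-open intervals absorbed into the closed one. The paper then obtains the Gaussian case by bounding the density by $1$, which your bound $\varphi(0)=1/\sqrt{2\pi}$ merely sharpens.
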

\begin{proof} 
The first statement is exactly \citet[Lemma 2.1]{chernozhukov2016empirical};
the second claim follows as the Gaussian density is bounded by $1$.
\end{proof}

\begin{lemma}\label{lemma:combine-coupling-with-kolmogorov} 
Suppose that $\bb{P}(|X-Y| > \nu ) \le \eta$ and $\sup_{t \in \bb{R}}|\bb{P}(Y \le t) - \Phi(t)| \le \epsilon$. 
Then
\[
    \sup_{t \in \bb{R}}|\bb{P}(X \le t) - \Phi(t)| \le \eta + 3\epsilon + 2\nu.
\]
\end{lemma}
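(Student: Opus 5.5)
The plan is to chain the coupling bound of Lemma~\ref{lemma:coupling-to-kolmogorov} with the assumed Kolmogorov bound for $Y$, using the triangle inequality in Kolmogorov distance. Fix $t \in \mathbb{R}$ and write
\[
|\mathbb{P}(X \le t) - \Phi(t)| \le |\mathbb{P}(X \le t) - \mathbb{P}(Y \le t)| + |\mathbb{P}(Y \le t) - \Phi(t)|.
\]
By hypothesis, the second term is at most $\epsilon$. For the first term, Lemma~\ref{lemma:coupling-to-kolmogorov} gives the bound $\eta + \sup_{s} \mathbb{P}(|Y - s| \le \nu)$. What remains is to control the anti-concentration of $Y$, since $Y$ itself is not Gaussian.

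To do this, I will transfer anti-concentration from $\Phi$ to $Y$ via the Kolmogorov bound. For any $s$,
\[
\mathbb{P}(|Y-s|\le\nu) \le \mathbb{P}(Y \le s+\nu) - \mathbb{P}(Y < s-\nu).
\]
Each of the two terms on the right is within $\epsilon$ of the corresponding Gaussian quantity. For the strict inequality $\mathbb{P}(Y<s-\nu)$, take the left limit of the CDF bound: since $\Phi$ is continuous, $|\mathbb{P}(Y<u)-\Phi(u)|\le\epsilon$ as well. Hence
\[
\mathbb{P}(|Y-s|\le\nu) \le \Phi(s+\nu)-\Phi(s-\nu)+2\epsilon \le 2\nu\,\varphi_{\max} + 2\epsilon.
\]
Here $\varphi_{\max} = 1/\sqrt{2\pi} \le 1$, so this gives at most $2\nu + 2\epsilon$. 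This matches the simplification already noted in Lemma~\ref{lemma:coupling-to-kolmogorov}, where the Gaussian density is bounded by $1$.

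Summing the pieces gives $\eta + (2\nu + 2\epsilon) + \epsilon = \eta + 3\epsilon + 2\nu$, uniformly in $t$, which is the claim. There is no real obstacle here. The only point needing care is the left-limit (strict inequality) step in the anti-concentration transfer, and the continuity of $\Phi$ handles it.
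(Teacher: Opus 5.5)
Your proposal is correct and follows essentially the same route as the paper. Both arguments apply the triangle inequality together with Lemma~\ref{lemma:coupling-to-kolmogorov}, and bound the anti-concentration term $\sup_s \mathbb{P}(|Y-s|\le\nu)$ by $2\epsilon+2\nu$, using the left-limit argument for $\mathbb{P}(Y<s-\nu)$ and $|\Phi'|\le 1$.
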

\begin{proof}
Note that given any $t$ our assumptions imply $\bb{P}(Y \le t) \le \Phi(t) + \epsilon$ and
\[
    \bb{P}(Y < t) = \lim_{u \uparrow t} \bb{P}(Y \le u) \ge \lim_{u \uparrow t} \Phi(u) - \epsilon = \Phi(t) - \epsilon,
\]
using the left-limit property of the distribution function $t \mapsto \bb{P}(Y \le t)$ and continuity of $\Phi$.  
Thus
\begin{align*}
    \sup_{t \in \mathbb{R}} \mathbb{P}(|Y - t| \le \nu) 
    &= \sup_{t \in \mathbb{R}} \mathbb{P}(Y  \le t + \nu) -  \mathbb{P}(Y  < t - \nu) \\
    &\le \sup_{t \in \mathbb{R}} \left\{\Phi(t + \nu) + \epsilon - [\Phi(t - \nu) - \epsilon]\right\} \le 2\epsilon + 2\nu,
\end{align*}
where we have used $|\Phi'| \le 1$ in the last step. 
We then use \Cref{lemma:coupling-to-kolmogorov} and the triangle inequality for supremum norms to conclude:
\begin{align*}
    \sup_{t \in \bb{R}}|\bb{P}(X \le t) - \Phi(t)|
    &\le \sup_{t \in \bb{R}}|\bb{P}(X \le t) - \bb{P}(Y \le t)| + \sup_{t \in \bb{R}}|\bb{P}(Y \le t) - \Phi(t)| \\
    &\le \eta + \sup_{t \in \mathbb{R}} \mathbb{P}(|Y - t| \le \nu) + \epsilon \le 3\epsilon + 2\nu + \eta. \qedhere
\end{align*} 
\end{proof}

\subsection{Proof of Theorems \ref{thm:pooled-pad} and~\ref{thm:pooled-thr}} \label{sec:proof-pooled}

Before proving Theorems~\ref{thm:pooled-pad} and~\ref{thm:pooled-thr}, we establish the supermartingale property of $H_t$ asserted in \Cref{lemma:martingale} and the asymptotic validity corollary \ref{cor:pooled-validity}.

\begin{proof}[Proof of \Cref{lemma:martingale}]
We show that under $\mathcal{H}_0$, the process $H_t = T^{-1/2}\sum_{s \le t} X_{g_s}(s)/\sigma_{g_s}$ is an $\mathcal{F}_t$-supermartingale. Since $g_s$ is $\mathcal{F}_{s-1}$-measurable (\Cref{assn:dgm}), so is $\sigma_{g_s}$. By the equivalent i.i.d.\ representation (\Cref{lemma:iid-representation}), the observation $X_{g_s}(s)$ has conditional mean $\mu_{g_s}$ given $\mathcal{F}_{s-1}$, so
\[
    \mathbb{E}\left[\left.T^{-1/2}\frac{X_{g_s}(s)}{\sigma_{g_s}}\right|\mathcal{F}_{s-1}\right]
    = T^{-1/2}\frac{\mu_{g_s}}{\sigma_{g_s}} \le 0 \quad \text{a.s.},
\]
because $\mu_g \le 0$ for every $g$ under $\mathcal{H}_0$ and $\sigma_g > 0$. Hence $\mathbb{E}[H_s \mid \mathcal{F}_{s-1}] \le H_{s-1}$. Iterating the tower property for $v < s \le t$ yields $\mathbb{E}[H_t \mid \mathcal{F}_v] \le H_v$ a.s.
\end{proof}

\begin{proof}[Proof of \Cref{cor:pooled-validity}]
For $\hat H_T^{\mathrm{pad}}$, note that $\hat H_T^{\mathrm{pad}} \le \hat H_T^{\mathrm{pad}\,'}$ a.s.\ under $\mathcal{H}_0$, since $\hat H_T^{\mathrm{pad}} - \hat H_T^{\mathrm{pad}\,'} = T^{-1/2}\sum_t \mu_{g_t}/\hat\sigma^{\mathrm{pad}}_{g_t} \le 0$ when $\mu_{g_t} \le 0$ and $\hat\sigma^{\mathrm{pad}}_{g_t} > 0$. Hence $\mathbb{P}\{\hat H_T^{\mathrm{pad}} > c_\alpha\} \le \mathbb{P}\{\hat H_T^{\mathrm{pad}\,'} > c_\alpha\}$, and by \Cref{thm:pooled-pad} the RHS is bounded by
\[
    1 - \Phi(c_\alpha) + C_\nu\frac{\log^{3/2}(kT)}{\sqrt T}\sum_{g=1}^k(\sigma_g \vee \sigma_g^{-1}) = \alpha + o(1),
\]
under the stated growth condition. The analogous argument for $\hat H_T^{\mathrm{thr}}$ uses \Cref{thm:pooled-thr}. Taking the $\limsup$ yields the first claim.

For the second claim, observe that under the sharp null $\mu_g \equiv 0$, the feasible statistic equals its centered counterpart identically: $\hat H_T^{\bullet} = \hat H_T^{\bullet\,'}$ a.s. Applying \Cref{thm:pooled-pad,thm:pooled-thr} directly to $\hat H_T^{\bullet}$ yields
\[
    \left|\mathbb{P}\{\hat H_T^{\bullet} > c_\alpha\} - \alpha\right| \to 0
\]
under the stated growth condition, giving the exact limit $\alpha$.
\end{proof}

\begin{lemma}\label{lem:consistent-estimator-variance}
With probability at least $1-\eta$, the estimator $\hat\sigma$ satisfies
\begin{align} 
\label{eq:estimator-consistency}
\left|\frac{\hat\sigma_g - \sigma_g}{\sigma_g}\right| &\lesssim \nu^2 \left( \frac{\log(T/\eta)}{N_g} + \frac{\sqrt{\log(T/\eta)} + \log(1/\eta)}{\sqrt{N_g}} \right)
\intertext{and}
\label{eq:estimator-lb}
\frac{\sigma_g - \hat \sigma_g}{\sigma_g} &\lesssim \nu^2 \left( \frac{\log(T/\eta)}{N_g} + \sqrt{\frac{\log(T/\eta)}{N_g} }\right).
\end{align}
\end{lemma}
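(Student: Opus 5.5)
The plan is to reduce everything to the two normalized deviation bounds already available, \Cref{lemma:normalized-bound-mean} and \Cref{lemma:normalized-bound-var}, through an exact algebraic decomposition of $\hat\sigma_g^2$. Write $D_t = (X_g(t)-\mu_g)/\sigma_g$ for $t \in T_g$. Then
\[
\frac{\hat\sigma_g^2}{\sigma_g^2} = \frac{1}{N_g}\sum_{t\in T_g} D_t^2 - \Bigl(\frac{1}{N_g}\sum_{t\in T_g} D_t\Bigr)^2 .
\]
Set $A = N_g^{-1/2}\sum_{t \in T_g}(D_t^2-1)$ and $B = N_g^{-1/2}\sum_{t\in T_g} D_t$. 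This gives
\[
\frac{\hat\sigma_g^2}{\sigma_g^2} - 1 = \frac{A}{\sqrt{N_g}} - \frac{B^2}{N_g}.
\]
On an event of probability at least $1-\eta$, obtained by a union bound over the two lemmas with $\eta/2$ each (which only changes constants inside the logarithms), we have:
\begin{itemize}
\item $|B| \lesssim \nu\sqrt{\log(T/\eta)}$;
\item $|A| \lesssim \nu^2(\sqrt{\log(T/\eta)}+\log(1/\eta))$;
\item $A \gtrsim -\nu^2\sqrt{\log(T/\eta)}$, from \eqref{eq:lower-variance-bound}.
\end{itemize}
Both lemmas are already stated for the random, data-dependent $N_g$, via \Cref{lemma:iid-representation} and the union bound over prefixes, so no further adaptivity issue arises.

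Next I pass from the variance to the standard deviation. The key inequality is the elementary bound $|\sqrt{x}-1| \le |x-1|$ for $x\ge0$, which follows from $|\sqrt x - 1| = |x-1|/(\sqrt x+1)$. Taking $x = \hat\sigma_g^2/\sigma_g^2$, I get
\[
\Bigl|\frac{\hat\sigma_g-\sigma_g}{\sigma_g}\Bigr| \le \frac{|A|}{\sqrt{N_g}} + \frac{B^2}{N_g}.
\]
Plugging in the bounds gives $\nu^2(\sqrt{\log(T/\eta)}+\log(1/\eta))/\sqrt{N_g} + \nu^2\log(T/\eta)/N_g$, which is \eqref{eq:estimator-consistency}. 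The one-sided bound \eqref{eq:estimator-lb} follows the same way: $(\sigma_g-\hat\sigma_g)/\sigma_g = 1-\sqrt{x} \le 1-x$ whenever $x\le 1$, and the left side is nonpositive otherwise. Then $1-x = -A/\sqrt{N_g} + B^2/N_g$. Using only the lower bound on $A$ removes the $\log(1/\eta)$ term and yields $\nu^2\sqrt{\log(T/\eta)/N_g} + \nu^2\log(T/\eta)/N_g$.

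There is no deep obstacle here; the proof is mostly bookkeeping. The points I would check most carefully are:
\begin{itemize}
\item the sign conventions in the one-sided case, where it is essential that only the lower tail \eqref{eq:lower-variance-bound} of the chi-square-type sum is used, since the upper tail is heavier and carries the $\log(1/\eta)$ term;
\item that the two cited lemmas are stated with the same normalization by $\sigma_g$, so the $\nu^2$ and $\nu$ factors combine as $\nu^2$ using $\nu \ge 1$;
\item that $\hat\sigma_g$ uses the $1/N_g$ normalization, matching the decomposition above exactly with no $N_g/(N_g-1)$ correction.
\end{itemize}
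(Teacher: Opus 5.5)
Your proposal is correct and follows essentially the same route as the paper: the same decomposition of $\hat\sigma_g^2/\sigma_g^2-1$ into a normalized sum of squared deviations minus a squared normalized mean, each bounded via \Cref{lemma:normalized-bound-mean} and \Cref{lemma:normalized-bound-var} with a union bound. The result is then passed to $\hat\sigma_g$ through the inequality $|1-u|\le|1-u^2|$, and the lower-tail bound \eqref{eq:lower-variance-bound} is swapped in for \eqref{eq:estimator-lb}; your explicit case split ($x\le1$ versus $x>1$) just spells out a step the paper leaves implicit.
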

\begin{proof}
In order to verify Lemma \ref{lem:consistent-estimator-variance}, we proceed in a few steps. To simplify notation, we now abbreviate $X_t \coloneqq X_{g_t}(t)$, put $\hat\mu_g = \frac{1}{N_g}\sum_{t \in T_g} X_t$, and note the identities
\begin{align*}
    \hat\sigma_g^2 - \sigma_g^2 
    &= \frac{1}{N_g}\sum_{t \in T_g} (X_t - \hat\mu_g)^2 - \sigma_g^2 \\
    &= \left\{\frac{1}{N_g}\sum_{t \in T_g} [X_t - \mu_g]^2 - \sigma_g^2\right\} - [\hat \mu_g - \mu_g]^2  \\
    &= \left\{\frac{1}{N_g}\sum_{t \in T_g} [X_t - \mu_g]^2 - \sigma_g^2\right\} - \left\{\frac{1}{N_g} \sum_{t \in T_g} X_t - \mu_g \right\}^2.
\intertext{Therefore,}
\frac{\hat\sigma_g^2 - \sigma_g^2}{\sigma_g^2} &=  \left\{\frac{1}{N_g}\sum_{t \in T_g} \frac{[X_t - \mu_g]^2 - \sigma_g^2}{\sigma_g^2}\right\} - \left\{\frac{1}{N_g} \sum_{t \in T_g} \frac{X_t - \mu_g}{\sigma_g}\right\}^2.
\end{align*}
Thus, it suffices to give high-probability bounds on the two bracketed terms. For the first term, Lemma \ref{lemma:normalized-bound-mean} gives that with probability $1-\eta$,
\[ \left|\frac{1}{N_g}\sum_{t \in T_g} \frac{X_t-\mu_g}{\sigma_g}\right| \lesssim \nu\sqrt{\frac{2\log(T/\eta)}{N_g}}.\]
Next, we may apply Lemma \ref{lemma:normalized-bound-var} to find that with probability $1-\eta$,
\[\left|\frac{1}{N_g}\sum_{t \in T_g} \frac{[X_g(t)-\mu_g]^2 - \sigma^2_g}{\sigma^2_g}\right| \lesssim \nu^2\left(\frac{\sqrt{\log(T/\eta)} + \log(1/\eta)}{\sqrt{N_g}}\right)\]
Combining these two estimates with a union bound, we obtain that with probability $1-\eta$,
\[\left|\frac{\hat\sigma_g^2 - \sigma_g^2}{\sigma^2_g}\right| \lesssim  \nu^2 \left( \frac{\log(T/\eta)}{N_g} + \frac{\sqrt{\log(T/\eta)} + \log(1/\eta)}{\sqrt{N_g}} \right).\]
Finally, we use the fact that $|1-u| \le |1-u||1+u| = |1 - u^2|$ for $u = \hat\sigma/\sigma > 0$ to conclude. Replacing our application of \eqref{eq:two-sided-variance-bound} with  \eqref{eq:lower-variance-bound} proves \eqref{eq:estimator-lb}.
\end{proof}

\begin{proposition}\label{prop:feasible-coupling}
Put \(\Delta_1 = \sqrt{\log(kT/\eta)} + \log(k/\eta)\) and \(\Delta_2 = \sqrt{\log(kT/\eta)}.\) With probability at least $1-\eta$, the centered, feasible statistic $\hat H'_T(\lambda,\infty)$ satisfies
\begin{align}|\hat H'_T(\lambda,\infty) - H'_T| \lesssim \frac{1}{\sqrt{T}}\sum_{g=1}^k \left( \nu^3\Delta_1\Delta_2 + \nu\Delta_2\frac{\lambda}{\sigma_g} + \nu^5\Delta_1\Delta^2_2 \frac{\sigma_g}{\lambda}\right). \label{eq:simple-reg-coupling}
\end{align} 
Moreover, for $\rho <\infty$, the statistic $\hat H'_T(\lambda,\rho)$ satisfies 
\begin{align} 
|\hat H'_T(\lambda,\rho) - H'_T| \lesssim \frac{C_\nu}{\sqrt{T}}\sum_{g=1}^k \rho^{3/2} + \frac{\sigma_g}{\lambda}\rho^2.
\label{eq:complex-reg-coupling}
\end{align}
with probability at least $1- kTe^{-\rho/(C\nu^4)}$.
\end{proposition}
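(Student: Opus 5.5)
We decompose the difference arm by arm. Here $H'_T = T^{-1/2}\sum_t (X_{g_t}(t)-\mu_{g_t})/\sigma_{g_t}$ is the infeasible centered statistic. Writing $S_g = \sum_{t\in T_g}(X_t-\mu_g)/\sigma_g$ for the standardized centered sum of arm $g$, we have
\[
\hat H'_T(\lambda,\rho) - H'_T = \frac{1}{\sqrt T}\sum_{g=1}^k S_g\left(\frac{\sigma_g}{\tilde\sigma_g} - 1\right),
\]
so it suffices to bound, for each $g$, the product $|S_g|\cdot|\sigma_g/\tilde\sigma_g - 1|$ by an $O(1)$-in-$T$ quantity (up to logs) that does not grow with $N_g$. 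Apply \Cref{lemma:normalized-bound-mean} and \Cref{lem:consistent-estimator-variance} with $\eta$ replaced by $\eta/(2k)$, and take a union bound over arms. Then with probability $1-\eta$, simultaneously for all $g$:
\begin{itemize}
\item $|S_g|\lesssim \nu\sqrt{N_g}\,\Delta_2$;
\item $|\hat\sigma_g/\sigma_g - 1| \lesssim \nu^2(\Delta_2^2/N_g + \Delta_1/\sqrt{N_g})$;
\item the one-sided lower bound $1-\hat\sigma_g/\sigma_g \lesssim \nu^2(\Delta_2^2/N_g+\Delta_2/\sqrt{N_g})$.
\end{itemize}
The $\log(k/\eta)$ inside $\Delta_1$ comes exactly from \eqref{eq:estimator-consistency}.

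For \eqref{eq:simple-reg-coupling}, write $\tilde\sigma_g = \hat\sigma_g + \lambda/\sqrt{N_g}$ and
\[
\frac{\sigma_g}{\tilde\sigma_g}-1 = \frac{(\sigma_g-\hat\sigma_g) - \lambda/\sqrt{N_g}}{\tilde\sigma_g}.
\]
The padding term contributes $|S_g|\,\lambda/(\sqrt{N_g}\tilde\sigma_g)$. We need a lower bound $\tilde\sigma_g\gtrsim\sigma_g$ to get $\nu\Delta_2\lambda/\sigma_g$. For that we split into cases.
\begin{itemize}
\item When $N_g\ge C\nu^4\Delta_2^2$, the one-sided bound gives $\hat\sigma_g\ge\sigma_g/2$, so $\tilde\sigma_g \ge \sigma_g/2$ and the padding term is at most $\lesssim\nu\Delta_2\lambda/\sigma_g$. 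The estimation term is then $|S_g|\,|\sigma_g-\hat\sigma_g|/\tilde\sigma_g \lesssim \nu\sqrt{N_g}\Delta_2\cdot\nu^2(\Delta_2^2/N_g+\Delta_1/\sqrt{N_g}) \lesssim \nu^3\Delta_1\Delta_2$, using $\Delta_2^2/\sqrt{N_g}\lesssim\Delta_2\lesssim \Delta_1$ in this regime.
\item When $N_g< C\nu^4\Delta_2^2$ (few samples), we cannot trust $\hat\sigma_g$ from below. Instead we bound $|\sigma_g/\tilde\sigma_g - 1|\le 1+\sigma_g/\tilde\sigma_g\le 1+\sigma_g\sqrt{N_g}/\lambda$, using only $\tilde\sigma_g\ge\lambda/\sqrt{N_g}$. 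Multiplying by $|S_g|\lesssim\nu\sqrt{N_g}\Delta_2$ gives $\nu\sqrt{N_g}\Delta_2 + \nu N_g\Delta_2\sigma_g/\lambda$. Plugging in $N_g\lesssim \nu^4\Delta_2^2$ yields terms of order $\nu^3\Delta_2^2$ and $\nu^5\Delta_2^3\sigma_g/\lambda$. Both are dominated by the claimed $\nu^3\Delta_1\Delta_2$ and $\nu^5\Delta_1\Delta_2^2\sigma_g/\lambda$, since $\Delta_2\le\Delta_1$.
\end{itemize}
Summing over $g$ and dividing by $\sqrt T$ gives \eqref{eq:simple-reg-coupling}.

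For \eqref{eq:complex-reg-coupling} the same case split is driven by $\rho$ rather than by $\Delta_2$. Choose $\eta = kTe^{-\rho/(C\nu^4)}$, so that $\log(kT/\eta) = \rho/(C\nu^4)$, i.e.\ $\Delta_2^2\asymp\rho/\nu^4$, while $\log(k/\eta)\le \rho/(C\nu^4)$ as well, so $\Delta_1\lesssim \rho$.
\begin{itemize}
\item If $N_g>\rho$ there is no padding, $\tilde\sigma_g=\hat\sigma_g$. Since $N_g>\rho\gtrsim\nu^4\Delta_2^2$ (for $C$ large), the one-sided bound gives $\hat\sigma_g\ge\sigma_g/2$. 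The product bound above then gives $\lesssim \nu^3\Delta_2(\Delta_2^2/\sqrt{N_g}+\Delta_1)$, which is $\lesssim C_\nu\rho^{3/2}$; this is the one place where $\rho$ must be large relative to $\nu^4\log$ terms.
\item If $N_g\le\rho$, the crude bound $|S_g|(1+\sigma_g\sqrt{N_g}/\lambda)$ gives at most $\nu\sqrt\rho\Delta_2+\nu\rho\Delta_2\sigma_g/\lambda\lesssim C_\nu(\rho + \rho^{3/2}\sigma_g/\lambda)$, which is within the stated $\rho^2\sigma_g/\lambda$.
\end{itemize}

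The main obstacle is the first bullet: ensuring that $\hat\sigma_g$ is not spuriously small, which is the only place a ratio can blow up. The fix is to rely on the \emph{one-sided} bound \eqref{eq:estimator-lb}, which carries no $\log(1/\eta)$ term, and to use it exactly in the regime where $N_g$ dominates its right side. A secondary subtlety is that $N_g$ is random and data-dependent. This is already handled by the time-uniform (max over prefixes $q\le T$) form of \Cref{lemma:normalized-bound-mean,lemma:normalized-bound-var}, via \Cref{lemma:iid-representation}, so the bounds hold simultaneously at the realized $N_g$.
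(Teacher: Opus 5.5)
Your proposal is correct and follows essentially the same argument as the paper. The shared steps are the arm-wise decomposition $\hat H'_T - H'_T = T^{-1/2}\sum_g S_g(\sigma_g/\tilde\sigma_g - 1)$, a union bound over arms for the mean and variance events, and a case split at $\sqrt{N_g}\asymp\nu^2\Delta_2$: the one-sided bound \eqref{eq:estimator-lb} gives $\hat\sigma_g\ge\sigma_g/2$ for well-sampled arms, and the crude bound $\tilde\sigma_g\ge\lambda/\sqrt{N_g}$ handles the rest. For the thresholded statistic you both take $\eta=kTe^{-\rho/(C\nu^4)}$. The only implicit point is that for $\rho<\infty$ you may assume this $\eta<1$, since otherwise the claim is vacuous. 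That assumption is what gives $\Delta_1\lesssim\rho/\nu^4$ and $\rho\ge 1$ in your final comparisons. No further largeness condition on $\rho$ is needed in the $N_g>\rho$ case.
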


\begin{proposition}\label{prop:feasible-coupling}
Put \(\Delta_1 = \sqrt{\log(kT/\eta)} + \log(k/\eta)\) and \(\Delta_2 = \sqrt{\log(kT/\eta)}\). With probability at least \(1-\eta\), the centered, feasible statistic \(\hat H'_T(\lambda,\infty)\) satisfies
\begin{align}
|\hat H'_T(\lambda,\infty) - H'_T|
\lesssim \frac{1}{\sqrt{T}}\sum_{g=1}^k
\left(
\nu^3\Delta_1\Delta_2
+\nu\Delta_2\frac{\lambda}{\sigma_g}
+\nu^5\Delta_1\Delta_2^2\frac{\sigma_g}{\lambda}
\right).
\label{eq:simple-reg-coupling}
\end{align}
Moreover, for \(\rho<\infty\), the statistic \(\hat H'_T(\lambda,\rho)\) satisfies
\begin{align}
|\hat H'_T(\lambda,\rho)-H'_T|
\lesssim
\frac{C_\nu}{\sqrt{T}}\sum_{g=1}^k
\left(
\rho^{3/2}+\frac{\sigma_g}{\lambda}\rho^2
\right)
\label{eq:complex-reg-coupling}
\end{align}
with probability at least \(1-kT\exp{-\rho/(C\nu^4)}\).
\end{proposition}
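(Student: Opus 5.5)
Write $H'_T = T^{-1/2}\sum_t (X_t-\mu_{g_t})/\sigma_{g_t}$ and regroup both statistics by arm. With $e_g \coloneqq N_g^{-1/2}\sum_{t\in T_g}(X_t-\mu_g)/\sigma_g$ we have
\[
\hat H'_T(\lambda,\rho)-H'_T=\frac{1}{\sqrt T}\sum_{g=1}^k \sqrt{N_g}\,e_g\left(\frac{\sigma_g}{\tilde\sigma_g}-1\right),
\]
so it suffices to control, arm by arm and on a single high-probability event, the product of the standardized sum $e_g$ and the relative error $|\sigma_g/\tilde\sigma_g-1|=|\tilde\sigma_g-\sigma_g|/\tilde\sigma_g$, weighted by $\sqrt{N_g}$. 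The event is obtained by applying Lemma~\ref{lemma:normalized-bound-mean} and Lemma~\ref{lem:consistent-estimator-variance} to each arm with confidence $\eta/(2k)$ and taking a union bound. This yields, simultaneously for all $g$, the bounds $|e_g|\lesssim \nu\Delta_2$, $|\hat\sigma_g-\sigma_g|/\sigma_g\lesssim \nu^2(\Delta_2^2/N_g+\Delta_1/\sqrt{N_g})$, and the one-sided bound $(\sigma_g-\hat\sigma_g)/\sigma_g\lesssim\nu^2(\Delta_2^2/N_g+\Delta_2/\sqrt{N_g})$. Replacing $\eta$ by $\eta/k$ is exactly what turns $\log(T/\eta)$ into $\log(kT/\eta)$ in $\Delta_1,\Delta_2$.

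For \eqref{eq:simple-reg-coupling} ($\rho=\infty$) we have $\tilde\sigma_g=\hat\sigma_g+\lambda/\sqrt{N_g}$. Split into two cases according to whether $\hat\sigma_g\ge\sigma_g/2$.

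\emph{Case 1: $\hat\sigma_g\ge\sigma_g/2$.} Here $\tilde\sigma_g\ge\sigma_g/2$, and
\[
|\tilde\sigma_g-\sigma_g|\le|\hat\sigma_g-\sigma_g|+\lambda/\sqrt{N_g}.
\]
Multiplying by $\sqrt{N_g}|e_g|/\tilde\sigma_g$, the padding piece contributes $\nu\Delta_2\lambda/\sigma_g$. The estimation piece contributes $\nu^3\Delta_2(\Delta_2^2/\sqrt{N_g}+\Delta_1)$. Its first part is still unwanted, so bound it using $\tilde\sigma_g\ge\lambda/\sqrt{N_g}$ instead: $\sqrt{N_g}\,\sigma_g(\Delta_2^2/N_g)/\tilde\sigma_g\le \sigma_g\Delta_2^2/\lambda$. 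This produces a term of the form $\nu^5\Delta_1\Delta_2^2\sigma_g/\lambda$.

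\emph{Case 2: $\hat\sigma_g<\sigma_g/2$.} By the one-sided bound this forces $\sqrt{N_g}\lesssim\nu^2\Delta_2$ (with $\nu\ge1$). Then
\[
\sqrt{N_g}|e_g||\sigma_g/\tilde\sigma_g-1|\le\sqrt{N_g}|e_g|(1+\sigma_g\sqrt{N_g}/\lambda)\lesssim \nu\Delta_2+\nu^5\Delta_2^3\sigma_g/\lambda,
\]
which is dominated by the claimed terms since $\Delta_2\le\Delta_1$. Summing over $g$ gives \eqref{eq:simple-reg-coupling}.

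For \eqref{eq:complex-reg-coupling}, take $\eta=kTe^{-\rho/(C\nu^4)}$, so that $\log(kT/\eta)\asymp\rho/\nu^4$ and $\Delta_2\lesssim\sqrt\rho/\nu^2$. Arms with $N_g\le\rho$ are padded, and we bound them crudely as in Case 2: $\sqrt{N_g}\le\sqrt\rho$ and $|e_g|\lesssim\sqrt\rho$ give contributions of order $\rho+\rho^{3/2}\sigma_g/\lambda$, which lie within $\rho^{3/2}+\rho^2\sigma_g/\lambda$. Arms with $N_g>\rho$ are unpadded. For these, the variance deviation bound with $N_g\ge\rho\gtrsim\nu^4\log(kT/\eta)$ gives $|\hat\sigma_g/\sigma_g-1|\le1/2$, so $\tilde\sigma_g\asymp\sigma_g$. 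The contribution is then $\lesssim \nu\Delta_2\cdot\nu^2(\Delta_2^2/\sqrt{N_g}+\Delta_1)$, where $\Delta_1$ here comes from the exponential-tail term $\log(1/\eta)$; with $\log(k/\eta)$ chosen of order $\rho$ this is $\lesssim C_\nu\rho^{3/2}$.

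The main obstacle is the small-sample regime. Case 2 is where $\hat\sigma_g$ can be nearly zero, and there the argument must rely entirely on the padding $\lambda/\sqrt{N_g}$ and on the lower-tail variance bound \eqref{eq:estimator-lb}, which is where the $\sigma_g/\lambda$ and the high powers of $\nu$ arise. Some care is also needed to keep the bookkeeping of $\Delta_1$ (the heavier sub-exponential tail) versus $\Delta_2$ consistent with the stated powers.
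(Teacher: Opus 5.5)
Your proposal is correct and is essentially the paper's proof: the same per-arm decomposition, the same union-bound event built from Lemmas~\ref{lemma:normalized-bound-mean} and~\ref{lem:consistent-estimator-variance}, a small-/large-sample case split, and the same choice $\eta=kTe^{-\rho/(C\nu^4)}$ for the thresholded statistic; the only difference is that you split on $\hat\sigma_g\ge\sigma_g/2$ instead of on $\sqrt{N_g}\ge C_2\nu^2\Delta_2$ and absorb the leftover $\Delta_2^2/\sqrt{N_g}$ term via the padding bound $\tilde\sigma_g\ge\lambda/\sqrt{N_g}$, which works equally well. Two harmless slips: in Case~2 the first term is $\sqrt{N_g}\,|e_g|\lesssim\nu^3\Delta_2^2$, not $\nu\Delta_2$, though it is still at most $\nu^3\Delta_1\Delta_2$; and for unpadded arms with $\rho<\infty$ only the one-sided conclusion $\hat\sigma_g\ge\sigma_g/2$ from \eqref{eq:estimator-lb} is available, since the two-sided $|\hat\sigma_g/\sigma_g-1|\le 1/2$ fails when $\nu^2\Delta_1/\sqrt{N_g}$ is of order $\sqrt{\rho}/\nu^2$, but that lower bound is all your argument uses.
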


\begin{proof}
We first define a high-probability event controlling the mean and variance estimators for every arm, and then use these controls to couple the feasible and infeasible statistics.

\paragraph{Step 1: high-probability events.}
By a union bound over \(g\le k\), Lemmas \ref{lemma:normalized-bound-mean} and \ref{lem:consistent-estimator-variance} imply that, on an event \(\mathcal E(\eta)\) with probability at least \(1-\eta\), simultaneously for all \(g\le k\),
\begin{align}
\left|\frac{1}{\sqrt{N_g}}\sum_{t\in T_g}
\frac{X_g(t)-\mu_g}{\sigma_g}\right|
&\lesssim \nu\Delta_2, \nonumber\
\left|\frac{\hat\sigma_g-\sigma_g}{\sigma_g}\right|
&\lesssim \nu^2
\left(
\frac{\Delta_2^2}{N_g}+\frac{\Delta_1}{\sqrt{N_g}}
\right), \label{eq:feasible-event}\
\frac{\sigma_g-\hat\sigma_g}{\sigma_g}
&\lesssim \nu^2
\left(
\frac{\Delta_2^2}{N_g}+\frac{\Delta_2}{\sqrt{N_g}}
\right). \nonumber
\end{align}
Consequently, for a sufficiently large universal constant \(C_2\),
\[
\sqrt{N_g}\ge C_2\nu^2\Delta_2
\quad\Longrightarrow\quad
\hat\sigma_g\ge \sigma_g/2
\quad\text{and}\quad
\sqrt{N_g}\left|\frac{\hat\sigma_g-\sigma_g}{\sigma_g}\right|
\lesssim \nu^2\Delta_1.
\]

\paragraph{Step 2: coupling.}
Abbreviate \(X'_t=X_{g_t}(t)-\mu_{g_t}\). Grouping the coupling error by arm gives
\begin{align*}
|\hat H'_T(\lambda,\rho)-H'_T|
&\le \frac{1}{\sqrt T}\sum_{g=1}^k \xi_g,\
\xi_g
&\coloneqq
\left|\frac{1}{\sqrt{N_g}}\sum_{t\in T_g}
\frac{X'_t}{\sigma_g}\right|
\frac{\sqrt{N_g},
|\sigma_g-\hat\sigma_g(\lambda,\rho)|}
{\hat\sigma_g(\lambda,\rho)}.
\end{align*}

\paragraph{Step 3 (\(\rho=\infty\)).}
Here \(\hat\sigma_g(\lambda,\infty)
=\hat\sigma_g+\lambda/\sqrt{N_g}\).
Suppose first that \(\sqrt{N_g}\ge C_2\nu^2\Delta_2\). On \(\mathcal E(\eta)\), \(\hat\sigma_g\ge\sigma_g/2\), and hence
\[
\xi_g
\lesssim
\nu\Delta_2
\left(
\nu^2\Delta_1+\frac{\lambda}{\sigma_g}
\right).
\]
Suppose instead that \(\sqrt{N_g}<C_2\nu^2\Delta_2\). Since
\[
\frac{\sqrt{N_g},
|\sigma_g-\hat\sigma_g(\lambda,\infty)|}
{\hat\sigma_g(\lambda,\infty)}
\le
\sqrt{N_g}+\frac{N_g\sigma_g}{\lambda},
\]
the mean bound on \(\mathcal E(\eta)\) yields
\(
\xi_g
\lesssim
\nu^3\Delta_2^2
+
\nu^5\Delta_2^3\frac{\sigma_g}{\lambda}
\lesssim
\nu^3\Delta_1\Delta_2
+
\nu^5\Delta_1\Delta_2^2\frac{\sigma_g}{\lambda}.
\)
Combining the two cases,
\[
\xi_g
\lesssim
\nu^3\Delta_1\Delta_2
+\nu\Delta_2\frac{\lambda}{\sigma_g}
+\nu^5\Delta_1\Delta_2^2\frac{\sigma_g}{\lambda}.
\]
Summing over \(g\) proves \eqref{eq:simple-reg-coupling}.

\paragraph{Step 4 (\(\rho<\infty\)).}
Put \(\eta_\rho=kT\exp{-\rho/(C\nu^4)}\), with \(C\) sufficiently large. If \(\eta_\rho\ge1\), the claimed probability bound is vacuous. Otherwise, on \(\mathcal E(\eta_\rho)\),
\[
\Delta_1(\eta_\rho)\lesssim \rho/\nu^4,
\qquad
\Delta_2(\eta_\rho)\lesssim \sqrt{\rho}/\nu^2.
\]
If \(N_g>\rho\), no padding is applied. Choosing \(C\) sufficiently large ensures that \(\sqrt{N_g}\ge C_2\nu^2\Delta_2(\eta_\rho)\), so the large-sample bound above gives
\[
\xi_g
\lesssim
\nu^3\Delta_1(\eta_\rho)\Delta_2(\eta_\rho)
\lesssim C_\nu\rho^{3/2}.
\]
If \(N_g\le\rho\), the same deterministic bound as in Step 3 gives
\[
\xi_g
\lesssim
\nu\Delta_2(\eta_\rho)
\left(
\sqrt{\rho}+\rho\frac{\sigma_g}{\lambda}
\right)
\lesssim
C_\nu
\left(
\rho^{3/2}+\rho^2\frac{\sigma_g}{\lambda}
\right).
\]
Thus the same bound holds for every arm. Summing over \(g\) proves \eqref{eq:complex-reg-coupling}.
\end{proof}

\begin{proof}[Proof of Theorems \ref{thm:pooled-pad} and~\ref{thm:pooled-thr}]
    Firstly, put $\delta_t = \{X_{g_t}(t) - \mu_{g_t}\}/\sigma_{g_t}$ and note that by construction we have $\mathbb{E}[\delta_t|\mathcal{F}_{t-1}] = 0$, $\mathbb{E}[\delta_t^2|\mathcal{F}_{t-1}] = 1$,  and $\|\delta_t\|_{\psi_2} \lesssim \nu$. Thus, by a direct application of Corollary \ref{cor:subg-martingale-clt}, we have 
    \begin{equation*}\label{eq:infeasible-clt}
    \sup_{u \in \mathbb{R}}\left|\mathbb{P}\left( H'_T \le u \right) - \Phi(u) \right| \le \frac{C_{\nu} \log T}{\sqrt T}.
    \end{equation*}
   To transfer this to the centered feasible statistic, we note that whenever we have a bound of the form $\mathbb{P}\{|H'_T - \hat H'_T| > \Delta(\eta)\} \le \eta$, Lemma \ref{lemma:combine-coupling-with-kolmogorov} implies that
\begin{equation*}\label{eq:feasible-clt-coupled}
    \sup_{u \in \mathbb{R}}\left|\mathbb{P}\left( \hat H'_T \le u \right) - \Phi(u) \right| \lesssim \frac{C_{\nu} \log T}{\sqrt T} + \eta + \tilde \Delta(\eta) .
    \end{equation*}
   Thus, it suffices to find an appropriate pair of estimates $\eta$ and $\tilde \Delta(\eta)$. This is given by 
   applying the conclusion \eqref{eq:simple-reg-coupling} of Proposition \ref{prop:feasible-coupling} with $\eta = 1/T$ and $\lambda = \
   \sqrt{\log(kT)}$ and $\rho = \infty$, which gives 
\begin{equation*}
    \sup_{u \in \mathbb{R}}\left|\mathbb{P}\left( \hat H'_T \le u \right) - \Phi(u) \right| \lesssim \frac{C_{\nu} \log T}{\sqrt T} + \frac{1}{T} + C'_{\nu}\frac{\log^{3/2}(kT)}{\sqrt T}\left(\sum_{g \le k} \sigma_g \vee \sigma_g^{-1}\right), 
\end{equation*}
which implies the first claimed bound. 
If instead we apply \eqref{eq:complex-reg-coupling} with $\lambda = \infty $, then we obtain 
\begin{equation*}
    \sup_{u \in \mathbb{R}}\left|\mathbb{P}\left( \hat H'_T \le u \right) - \Phi(u) \right| \lesssim \frac{C_{\nu} \log T}{\sqrt T} +  kT\exp\{-\rho/(C\nu^4)\} + C''_{\nu}\frac{k\rho^{3/2}}{\sqrt T}. 
\end{equation*}
Finally, choosing $\rho = C_\nu\log(kT)$ for $C_\nu$ sufficiently large gives the second desired bound.
\end{proof}

\subsection{Proof of Theorem \ref{thm:max-test}}\label{sec:max-proof}

Theorem \ref{thm:max-test} follows an idea sketched in \citet[Section 7]{borovkov1983boundary}. We focus on the following boundary crossing events for Brownian motion.

\begin{lemma}[Robbins-Siegmund Boundary]
Let $\{W(t)\}_{t \ge 0}$ be a standard Wiener process, and suppose $\tau > 0$. Let $\Phi(x)$ denote the standard Gaussian CDF, and put $h(x) = x^2 + 2\log\Phi(x)$. Then:
\begin{align}
    \mathbb{P}\left\{\sup_{t > 1} W(t) \ge \sqrt{t} \cdot h^{-1}\left( \log t+h(a)\right)\right\} &= 1 - \Phi(a) + \Phi'(a)\left[ a + \frac{\Phi'(a)}{\Phi(a)}\right] \label{eq:complex-rs-boundary}; \\
    \mathbb{P}\left\{\sup_{t > \tau} t^{-1}W(t) \ge a\right\} &= 2[1 - \Phi(\tau^{1/2 }a)] \label{eq:simple-rs-boundary}; \\
    \mathbb{P}\left\{\max_{0<t < 1} W(t) \ge a'\right\} &= 2[1 - \Phi(a')]  \label{eq:reflection-principle}
\end{align}
\begin{proof}
Equations \eqref{eq:complex-rs-boundary} and \eqref{eq:simple-rs-boundary} are quoted from \citet[pp.~1411-1412]{robbins1970boundary}, which contains many other one- and two-sided boundary crossing probabilities. Eq.~\eqref{eq:reflection-principle} is the well-known reflection principle (\citealp{cinlar2011brownian}, Prop.~3.4), and is seen to be equivalent to \eqref{eq:simple-rs-boundary} by taking $a = \sqrt{\tau}a'$ and using the time inversion and dilation formulas (respectively, $\{tW(1/t)\}_{t \ge 0} \sim \{W(t)\}_{t \ge 0}$ and $\{\sqrt{\tau}W(t)\}_{t \ge 0} \sim \{W(t/\tau)\}_{t \ge 0}$; \citealp{cinlar2011brownian}, Thm.~1.6).
\end{proof}
\end{lemma}

In order to use \eqref{eq:simple-rs-boundary} and \eqref{eq:reflection-principle} to construct tests in our setting, we use the following quantitative invariance principle due to \citet{sakhanenko1984rate}.
\begin{lemma}[Quantitative invariance principle of \citealp{sakhanenko1984rate}]\label{lem:sakhanenko}
Let $X_1, X_2, \ldots$ be an independent sequence of random variables with $\mathbb{E}[X_i]=0$ and such that for all $i \ge 1$ and some $\lambda > 0$, we have $\mathbb{E}[\lambda|X_i|^3e^{\lambda|X_i|}] \le \mathbb{E}[X^2_i]$. Then, the $X_i$ can be redefined on an enriched probability space along with a sequence of centered Gaussian random variables $Y_1, Y_2, \ldots$ such that $\mathbb{E}[Y_i^2] = \mathbb{E}[X_i^2]$ and
\[\mathbb{E}\exp\left(\lambda C \max_{k \le n} \left|\sum_{i=1}^k X_i - Y_i \right|\right) \le 1 + \lambda\sum_{i=1}^n\mathbb{E}[X_i^2].\]
In particular, if the $X_i$ are i.i.d.~then for $t > 0$,
\[\mathbb{P}\left( \max_{k \le n} \left|\sum_{i=1}^k X_i - Y_i \right| > t\right) \le (1 + \lambda n \mathbb{E}[X_1^2])e^{-\lambda C t}.\]
Finally, if the $X_i$ satisfy Assumption \ref{assn:sub-g} and are standardized to unit variance, then we may take $\lambda \ge 1/C_\nu $, so that
\begin{equation}
    \mathbb{P}\left( \max_{k \le n} \left|\sum_{i=1}^k X_i - Y_i \right| > t\right) \le (1 + n \mathbb{E}[X_1^2])e^{-t / C_\nu}\label{eq:subg-sakhanenko}.
\end{equation}
\begin{proof}
    The first claimed statement is due to Sakhanenko and is cited from \citet[Theorem A]{shao1995strong}. The second follows immediately using Markov's inequality and identical distribution of the $X_i$. 
    
    The third claim follows from the fact that Bernstein's condition, namely $\mathbb{E}[|X_i|^k] \le \tfrac{k!}{2} B^{k-2} \mathbb{E}[X_i^2]$ for $k \ge 3$ and some $B > 0$, implies $\mathbb{E}[\lambda|X_i|^3 e^{\lambda|X_i|}] \le \mathbb{E}[X_i^2]$ for any $\lambda \le B^{-1}/C'$ \citep[p.~732]{zaitsev2013accuracy}; this is checked by writing the Taylor expansion of $e^{\lambda|X_i|}$. To derive Bernstein's from Assumption \ref{assn:sub-g}, apply \Cref{lem:subg-to-bernstein} to $X_i/\sigma$ with $\sigma = \sqrt{\mathbb{E}X_i^2}$, then multiply through by $\sigma^k$ to obtain
    \[
        \mathbb{E}[|X_i|^k] \le \tfrac{k!}{2}\,(C_{\nu/\sigma}\,\sigma)^{k-2}\,\mathbb{E}[X_i^2],
    \]
    so Bernstein's holds with $B = C_{\nu/\sigma}\sigma$. Choosing $\lambda = B^{-1}/C' \wedge 1$ and absorbing constants into $C_\nu$ gives the claimed inequality.
\end{proof}
\end{lemma}

In order to characterize the moderate deviations for our proposed statistic, we need a Gaussian approximation that becomes sharper in the tails. This is accomplished by combining Sakhanenko's coupling with the following argument. 
\begin{lemma}\label{lem:non-uniform-approximation-from-coupling}
    Suppose that $U$ and $V$ are random variables, and that $V$ has probability density function $f$. Suppose also that $\mathbb{P}\{U > V +\delta\} \le \eta$. Then,
    \[\mathbb{P}\{U > t\} - \mathbb{P}\{V > t\}  \le  \eta + \mathbb{P}\{V \in (t-\delta, t]\} =  \eta + \int_{t-\delta}^t f(v)\,dv.\]
    \begin{proof}
        Clearly the difference on the left is bounded by $\mathbb{P}\{V  \le t < U \}$. On this event, either $U > V + \delta$ or $V \in (t-\delta, t]$. We conclude by a union bound and write $\mathbb{P}\{V \in (t-\delta, t]\}$ as an integral.
    \end{proof}
\end{lemma}

Finally, we quote the Dudley-Strassen theorem \citep[Theorem 2]{dudley1968distances}, which we use to combine probabilistic approximations.

\begin{lemma} \label{lem:dudley-strassen}
    Let $U, V$ be random variables taking values in a complete, separable metric space $(S,d)$. Let for $x \in S$, let $\bar B_\delta(x) = \set{y \in S:d(x,y) \le \delta}$ denote the closed ball of radius $\delta$ centered at $x$, and define $A^\delta = \cup_{a \in A} \bar B_\delta(a)$ to be the $\delta$-enlargement of the set $A \subset S$ Then the following are equivalent.
    \begin{enumerate}
        \item For all closed sets $A \subset S$, $\mathbb{P}(U \in A) \le \mathbb{P}(V \in A^\delta) + \eta$.
        \item There exists a Borel probability measure $\mathbb{P}'$ on $S\times[0,1]$ and random variables $U'$, $V'$ such that $\mathbb{P}'\{d(U',V')>\delta\} \le \eta$.
    \end{enumerate}
\end{lemma}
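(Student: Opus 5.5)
The lemma is a quoted classical result, so the plan is to recall the standard argument rather than invent a new one; the two directions are of very different difficulty.

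For (2)$\Rightarrow$(1) I will give a direct argument. Take the coupling $(U',V')$ with the marginal laws of $U$ and $V$. For closed $A$, the event $\{U'\in A\}$ is contained in $\{V'\in A^\delta\}\cup\{d(U',V')>\delta\}$, because if $U'\in A$ and $d(U',V')\le\delta$ then $V'\in\bar B_\delta(U')\subset A^\delta$. A union bound then gives $\mathbb{P}(U\in A)\le\mathbb{P}(V\in A^\delta)+\eta$. One technical remark is needed: $A^\delta$ need not be Borel in general. I will either interpret the probability as an outer probability, or note that for closed $A$ in the cases we use (for example, $A$ compact, or $S=\mathbb{R}^n$) $A^\delta$ is closed.

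For (1)$\Rightarrow$(2), the substantive direction, I will follow Strassen's route via duality for an optimal transport problem with a $\{0,1\}$-valued cost.

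\textbf{Step 1: finite supports.} Assume first that $U$ and $V$ take finitely many values. A coupling with $\mathbb{P}\{d>\delta\}\le\eta$ is then a flow in a bipartite network: source $\to$ atoms $x$ of $U$ with capacity $\mathbb{P}(U=x)$; $x\to y$ with infinite capacity whenever $d(x,y)\le\delta$; atoms $y$ of $V$ $\to$ sink with capacity $\mathbb{P}(V=y)$. We need a flow of value at least $1-\eta$. The leftover mass of at most $\eta$ is then paired arbitrarily (product coupling of the residual masses), which gives exactly $\mathbb{P}\{d>\delta\}\le\eta$.

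By max-flow/min-cut, a finite-capacity cut is determined by a set $A$ of $U$-atoms on the source side. Its capacity is
\[
\mathbb{P}(U\notin A)+\mathbb{P}(V\in A^\delta),
\]
and hypothesis (1) makes this at least $1-\eta$. Finite sets are closed, so (1) applies directly.

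\textbf{Step 2: general laws.} Approximate the laws of $U$ and $V$ by finitely supported laws $\mu_n\to\mathcal{L}(U)$ and $\nu_n\to\mathcal{L}(V)$ in the Prokhorov metric, say within $\epsilon_n\downarrow 0$, using separability and tightness (tightness holds on a Polish space). Hypothesis (1) then transfers to $\mu_n,\nu_n$ with $\delta$ replaced by $\delta+2\epsilon_n$ and $\eta$ by $\eta+2\epsilon_n$. Step 1 produces couplings $\pi_n$ with $\pi_n\{d>\delta+2\epsilon_n\}\le\eta+2\epsilon_n$.

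The sequence $\pi_n$ is tight because its marginals are, so a subsequence converges weakly to some coupling $\pi$ of $\mathcal{L}(U)$ and $\mathcal{L}(V)$. For each fixed $\epsilon'>0$, the set $\{d\ge\delta+\epsilon'\}$ is closed, so the Portmanteau theorem gives
\[
\pi\{d\ge\delta+\epsilon'\}\le\liminf_n \pi_n\{d\ge\delta+\epsilon'\}\le\eta .
\]
Letting $\epsilon'\downarrow 0$ yields $\pi\{d>\delta\}\le\eta$. The $[0,1]$ factor in the statement only supplies auxiliary randomization to realize $\pi$ as the law of $(U',V')$ on a single Borel probability space.

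\textbf{Main obstacle.} I expect the hardest part to be the limiting step: keeping the approximation errors in $\delta$ and $\eta$ under control and justifying the passage to the limit. The strict inequality $d>\delta$ is not a closed condition, which is why I go through the closed sets $\{d\ge\delta+\epsilon'\}$ and let $\epsilon'\downarrow 0$. If that bookkeeping becomes delicate, I would fall back on citing \citet{dudley1968distances} or Strassen's theorem directly, since the paper only uses the lemma as a black box.
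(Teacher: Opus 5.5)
The paper gives no proof of this lemma; it quotes it from \citet[Theorem 2]{dudley1968distances}. Your self-contained route is the standard proof of Strassen's theorem: a union bound for (2)$\Rightarrow$(1), max-flow/min-cut for finitely supported laws, then Prokhorov approximation and compactness. Most of the steps check out.
\begin{itemize}
\item A finite-capacity cut is determined by a set $A$ of $U$-atoms, and its capacity is at least $\mathbb{P}(U\notin A)+\mathbb{P}(V\in A^\delta)\ge 1-\eta$.
\item The transfer of (1) to $(\mu_n,\nu_n)$ works if you apply (1) to the closed enlargement $\{x:d(x,A)\le\epsilon_n\}$ and test only finite $A$. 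For finite $A$, the union-of-closed-balls set in the statement coincides with $\{x:d(x,A)\le r\}$.
\item $\{\pi_n\}$ is tight because its marginals converge on a Polish space.
\end{itemize}

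The step that fails as written is the passage to the limit. For a closed set $F$, the Portmanteau theorem gives $\limsup_n\pi_n(F)\le\pi(F)$. The inequality you use, $\pi(F)\le\liminf_n\pi_n(F)$, is the statement for \emph{open} sets, and it is false for closed sets in general. For example, point masses at pairs with $d=\delta+\epsilon'-1/n$ can converge to a point mass at $d=\delta+\epsilon'$. In that case $\pi_n\{d\ge\delta+\epsilon'\}=0$ for all $n$, while $\pi\{d\ge\delta+\epsilon'\}=1$.

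Your diagnosis is therefore backwards. Because $d$ is continuous on $S\times S$, each set $\{d>r\}$ is open, and openness is exactly what lower semicontinuity needs. The repair is as follows. Fix $\epsilon'>0$. Along the convergent subsequence, once $2\epsilon_n<\epsilon'$,
\[
\pi\{d>\delta+\epsilon'\}\le\liminf_{n}\pi_n\{d>\delta+\epsilon'\}\le\liminf_{n}\pi_n\{d>\delta+2\epsilon_n\}\le\lim_{n}(\eta+2\epsilon_n)=\eta .
\]
Since $\{d>\delta+\epsilon'\}\uparrow\{d>\delta\}$ as $\epsilon'\downarrow 0$, this gives $\pi\{d>\delta\}\le\eta$. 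The shift by $\epsilon'$ is needed only because $\pi_n$ is controlled above the moving threshold $\delta+2\epsilon_n$, not because of any closedness issue. With this fix the argument is complete.

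One smaller point concerns realizing $\pi$ on $S\times[0,1]$, which takes a little more than ``auxiliary randomization.'' Take $\mathbb{P}'=\mathcal{L}(U)\otimes\mathrm{Leb}$, $U'(x,w)=x$ and $V'(x,w)=\phi(x,w)$. Here $\phi$ is a jointly measurable map that pushes Lebesgue measure forward to the conditional law $\pi(\cdot\mid x)$; such a map exists because $S$ is Polish.
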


\subsubsection{Coupling Lemmas}
To simplify notation, we write
\[\hat \sigma^2_g(q) = \frac{1}{q}\sum_{t \in T_{g,q}} [X_{g_s}(s) - \hat\mu_{g,q}]^2\]
for the standard variance estimator applied to the first $q$ draws of arm $g$.
We begin by defining (infeasible) centered and standardized versions of the statistic $\hat Z_g(q)$ from \eqref{eq:def-arm-t-stat}:
\[\hat Z'_g(q) = \sum_{s\in T_{g,q} }\frac{ X_{g_s}(s) - \mu_{g}}{\sqrt{q}\hat\sigma_g(q)}; \quad Z_g(q) = \sum_{s\in T_{g,q} } \frac{ X_{g_s}(s) - \mu_{g}}{\sqrt{q}\sigma_g}.\]
We begin by showing that $\hat Z'_g$ and $Z_g$ are close in a time-uniform sense.

\begin{lemma}
    Given $\eta \in (0,1)$, suppose that $(\nu \vee\nu^2) \sqrt{k\log(T/\eta)/T} \le 1/3$. Then, with probability at least $1-\eta$, it holds for all arms $1 \le g \le k$ that 
    \begin{equation}
        \label{eq:infeasible-t-stat-to-feasible}
    \max_{T/k \le q \le T} \left|\hat Z'_g(q)-Z_g(q) \right| \lesssim \nu^3\left(\frac{\log^{3/2}(kT/\eta)}{\sqrt{T/k}}\right).
    \end{equation}
\begin{proof}
Note that the bounds of Lemmas \ref{lemma:normalized-bound-mean} and \ref{lemma:normalized-bound-var} hold not only at the random time $N_g$ but at all times, $q$. Combining this observation with a straightforward modification of the proof of Lemma \ref{lem:consistent-estimator-variance} yields that on an event with probability $1-\eta$,
\begin{align} 
\label{eq:mean-consistency-time-unif}
\max_{T/k \le q \le T}  |Z_g(q)| &\lesssim \nu \sqrt{{\log(T/\eta)}} 
\\
\label{eq:var-consistency-time-unif}
\max_{T/k \le q \le T}  \left|\frac{\hat\sigma_g(q) - \sigma_g}{\sigma_g}\right| &\lesssim \nu^2 \left( \frac{\log(T/\eta)}{T/k} + \frac{\sqrt{\log(T/\eta)} + \log(1/\eta)}{\sqrt{T/k}} \right) 
\intertext{and}
\label{eq:var-lb-time-unif}
\max_{T/k \le q \le T} \frac{\sigma_g - \hat \sigma_g(q)}{\sigma_g} &\lesssim \nu^2 \left( \frac{\log(T/\eta)}{T/k} + \sqrt{\frac{\log(T/\eta)}{T/k} }\right) 
\end{align}
Writing $X'_g(s) = X_g(s) - \mu_g$, we then have on the same event that
\begin{align*}
    \max_{T/k \le q \le T}  \left\{\hat Z'_g(q)- Z_g(q) \right\} 
    &=\max_{T/k \le q \le T} \left| \left(\frac{\hat\sigma_g(q) - \sigma_g}{\hat\sigma_g(q)}\right)\sum_{s\in T_{g,q} }\frac{ X_{g_s}'(s)}{\sqrt{q}\sigma_g}\right|  \\
    &\le \max_{T/k \le q \le T} \left| \frac{\hat\sigma_g(q) - \sigma_g}{\hat\sigma_g(q)}\right| \max_{T/k \le q \le T} \left|\sum_{s\in T_{g,q} }\frac{ X_{g_s}'(s)}{\sqrt{q}\sigma_g}\right|  
    \intertext{Since combining our assumption $(\nu \vee\nu^2) \sqrt{k\log(T/\eta)/T} \le 1/3$ with \eqref{eq:var-lb-time-unif} implies $\hat\sigma_g(q) \ge \sigma_g/2$ for all $T/k \le q \le T$, we may apply the bounds \eqref{eq:mean-consistency-time-unif} and \eqref{eq:var-consistency-time-unif} to obtain}
    &\lesssim \nu^3 \sqrt{\log(T/\eta)} \left( \frac{\log(T/\eta)}{T/k} + \frac{\sqrt{\log(T/\eta)} + \log(1/\eta)}{\sqrt{T/k}} \right) \\
    &\lesssim \nu^3\left(\frac{\log^{3/2}(T/\eta)}{\sqrt{T/k}}\right). 
\end{align*}
The result then folows by a union bound over arms $1 \le g \le k$.
\end{proof}
\end{lemma}
Next, we apply Sakhanenko's invariance principle to show that each sequence $Z_g(t)$ is uniformly close to a sequence of Gaussian partial sums. 
\begin{lemma}
Under Assumption \ref{assn:sub-g}, for each $1 \le g \le k$ we may construct a probability space with random variables $\{\alt Z_g(t)\}_{t \ge 1} \sim \{Z_g(t)\}_{t \ge 1}$ and independent standard normal random variables $\{\alt\, Y_{g,t}\}_{t \ge 1}$ such that with probability $1-\eta$,
\begin{equation}\max_{T/k \le q \le T} \left| \sqrt{q}\,\alt Z_g(q) -  \sum_{i=1}^q \alt \, Y_{g,i}\right| \le C_\nu\log\left(\frac{1 +T}{\eta}\right).
\label{eq:infeasible-t-stat-to-gaussian}
\end{equation}
\begin{proof}
This follows immediately by inverting the bound \eqref{eq:subg-sakhanenko} from Lemma \ref{lem:sakhanenko}.
\end{proof}
\end{lemma}

Finally, we apply the Dudley-Strassen theorem to consolidate the previous two results. 

\begin{lemma} \label{lem:t-stat-brownian-embedding}
Under Assumption \ref{assn:sub-g}, for each $1 \le g \le k$ we may construct a probability space with random variables $\{\alttwo \hat Z'_g(t)\}_{t \ge 1} \sim \{\hat Z'_g(t)\}_{t \ge 1}$ and a standard Wiener process $\{\alttwo\,W_g(t)\}_{t \ge 0}$ such that with probability $1-\eta$,
\[\max_{T/k \le q \le T} \left|  \alttwo \hat Z'_g(q) -  \frac{{\alttwo\,W}_g(q)}{\sqrt q} \right| \lesssim  C_\nu\left( \frac{\log^{3/2}(kT/\eta)}{\sqrt{T/k}}\right) .\]
\begin{proof}
We repeatedly apply the Dudley-Strassen theorem (Lemma \ref{lem:dudley-strassen}) in the complete, separable metric space $\ell^\infty(J)$ for $J = \{T/k, T/k+1, \ldots, T\}$, writing $\|x-y\|_J = \max_{T/k \le q \le T} |x_q-y_q|$ for the resulting metric. 

In particular, suppose we are given a closed subset $A \subset \ell^\infty(J)$  and $\eta' \in (0,1)$. Then, taking $\eta = \eta'/2$ in \eqref{eq:infeasible-t-stat-to-feasible} and using Lemma \ref{lem:dudley-strassen} gives 
\[\mathbb{P}\left\{Z_g(q) \in A^{\epsilon}\right\} \le \mathbb{P}\left\{\hat Z'_g(q)  \in (A^{\epsilon})^{\delta} \right\} + \eta'/2 =  \mathbb{P}\left\{\hat Z'_g(q)  \in A^{\epsilon+\delta} \right\} + \eta'/2 ,\]
for $\delta = C_\nu\sqrt{k\log^3(2kT/\eta)/T}$.

Then, taking $\eta = \eta'/2$ in \eqref{eq:infeasible-t-stat-to-gaussian} and using Lemma \ref{lem:dudley-strassen} similarly gives that for a possibly different probability space $\mathbb{P}^\sharp$, 
\begin{align*}
    \mathbb{P}^\sharp\left\{\frac{1}{\sqrt{q}} \sum_{i=1}^q \alt\,Y_{g,i} \in A \right\}
    &\le \mathbb{P}^\sharp\left\{\alt Z_g(q) \in A^\epsilon \right\} + \eta'/2,
\intertext{for $\epsilon = C_\nu\sqrt{k\log^2\{(1+T)/\eta\}/T}$. Using the fact that $\alt Z_g$ and $Z_g$ are equally distributed and applying the bound of the preceding display, this is at most}
    &\le \mathbb{P}\left\{\hat Z'_g(q) \in A^{\epsilon+\delta} \right\} + \eta'.
\end{align*}
Finally, by applying Lemma \ref{lem:dudley-strassen} in the reverse direction, we may construct on a third probability space $\mathbb{P}^\flat$ random variables $\{\alttwo \hat Z'_g(t)\}_{t \ge 1} \sim \{\hat Z'_g(t)\}_{t \ge 1}$ and and independent standard normal random variables $\{\alttwo\, Y_{g,t}\}_{t \ge 1}$  such that with probability $1-\eta'$,
\[\max_{T/k \le q \le T} \left| \alttwo \hat Z'_g(q) -  \frac{1}{\sqrt{q}} \sum_{i=1}^q \alttwo\,Y_{g_i}\right| \le \delta + \epsilon.\]
Finally, by standard generalities (namely, Kolmogorov's extension and continuity theorems), we may extend the construction so that $ \sum_{i=1}^q \alttwo\,Y_{g_i} = {\alttwo\,W}_g(q)$ almost surely for a standard Wiener process ${\alttwo\,W}_g(q)$. We conclude by absorbing constants in the definition of $\delta$ and $\epsilon$, using our maintained assumption that $T \ge k \ge 2$.
\end{proof}
\end{lemma}

\subsubsection{Proofs of main results}

Using the coupling of Lemma \ref{lem:t-stat-brownian-embedding}, we can prove a moderate deviation principle for the linear boundary statistic $\max_{T/k \le q \le T} \sqrt{T/(kq)}\,\hat Z'_g(q)$, and its logarithmic boundary counterpart. In the linear boundary case, we may show that for suitable sequences $c_T, k_T \uparrow \infty$ as $T \uparrow \infty$,
\[[1 - o_p(1)][2-2\Phi(c_T)] \le \mathbb{P}\left\{\max_{T/k_T \le q \le T} \sqrt{\frac{T/k_T}{q}}\,\hat Z'_g(q) > c_T\right\}\le [1 + o_p(1)][2-2\Phi(c_T)].\] 
Crucially, the above approximation allows a union bound over a potentially large number of arms $k_T \uparrow \infty$, therefore it strictly generalizes the approximations provided by \citet{waudbysmith2023distribution,waudbysmith2024time}, who in turn build upon \citet{robbins1970boundary}.

\subsubsection*{Linear boundary}

Here we focus on the upper bound, which is all that is needed to prove Theorem \ref{thm:max-test}. In particular, we show the following. 

\begin{proposition}[Moderate deviations upper bound for linear boundary] \label{prop:mdp-lin} Given sequences $\{r_T\}_{T \ge 1}$ and $\{k_T\}_{T \ge 1}$, and any start time $q_0$ with $T/k_T \le q_0 \le T$, put
\[\psi_T^2 = \frac{[r_T^2\vee 1]\log^3(k_TT) + r_T^8}{T/k_T}. \]
Then, there exists a universal constant $C > 0$ such that
\[\mathbb{P}\left\{\max_{q_0 \le q \le T} \sqrt{\frac{q_0}{q}}\,\hat Z'_g(q) > r_T\right\}\le [1 + C(T^{-1} + \psi_T\,e^{C\psi_T})][2-2\Phi(r_T)].\]
In particular, the right-hand side is bounded as $[1 + o(1)][2-2\Phi(r_T)]$ whenever $\psi_T \downarrow 0$.
\begin{proof}
We work in the probability space $\mathbb{P}^\flat$ from Lemma \ref{lem:t-stat-brownian-embedding}, and omit the $\flat$ symbol to simplify notation. We begin by noting that by the time inversion formula $\{tW(1/t)\}_{t \ge 0} \sim \{W(t)\}_{t \ge 0}$ (\citealp{cinlar2011brownian}, Thm.~1.6) and Brownian scaling,
\begin{align*}
    \max_{q_0 \le q \le T} \frac{\sqrt{q_0}}{q} W_g(q) \le \sup_{u \in [q_0,\infty)} \frac{\sqrt{q_0}}{u} W_g(u)
    &\stackrel{d}{=} \sup_{u \in (0,1/q_0]} \sqrt{q_0}\,W_g(u) \stackrel{d}{=} \sup_{u \in (0,1]} W_g(u),
\end{align*}
where the final equality in distribution is crucially independent of $q_0$. Therefore, by Lemma \ref{lem:t-stat-brownian-embedding} (whose coupling, stated on $[T/k,T]$, applies a fortiori on the subinterval $[q_0,T]$), the triangle inequality, and $q_0 \le q$, we have
\[\mathbb{P}\left\{\max_{q_0 \le q \le T}  \sqrt{\frac{q_0}{q}}\, \hat Z'_g(q) - \sup_{u \in (0,1]} W_g(u) > \delta(\eta) \right\} \le \eta,\]
for $\delta(\eta) = C_\nu\sqrt{k\log^3(2kT/\eta)/T}$.
By Lemma \ref{eq:simple-rs-boundary} and Brownian scaling, for any $r > 0$,
\[\mathbb{P}\left\{ \sup_{u \in (0,1]} W_g(u) > r \right\} = 2[1-\Phi(r)].\]
Combining the previous two displays using Lemma \ref{lem:non-uniform-approximation-from-coupling}, we obtain for any $\eta \in(0,1)$
 \[\mathbb{P}\left\{\max_{q_0 \le q \le T}  \sqrt{\frac{q_0}{q}}\, \hat Z'_g(q) > r\right\} - 2[1-\Phi(r)] \le  \eta + 2\int_{r-\delta(\eta)}^r \varphi(v)\,dv.\]
 We first treat the case in which $r \le 1$. Then $1 - \Phi(r) \gtrsim 1$ and we may bound
 \[\eta + 2\int_{r-\delta(\eta)}^r \varphi(v)\,dv \lesssim \eta + \delta(\eta)\] by boundedness of the Gaussian density. Choosing $\eta = 1/T$, we obtain the bound
  \[\mathbb{P}\left\{\max_{q_0 \le q \le T}  \sqrt{\frac{q_0}{q}}\, \hat Z'_g(q) > r\right\} - 2[1-\Phi(r)] \le  C_\nu'\sqrt{\frac{\log^3(kT)}{T/k}} \cdot [1-\Phi(r)].\]
  On the other hand, if $r \ge 1$ then we have $1- \Phi(r) \gtrsim r^{-1}e^{-r^2/2}$ by Mill's inequality. Choosing $\eta = T^{-1}r^{-1}e^{-r^2/2} \lesssim T^{-1}2[1-\Phi(r)]$, we have
  \[\delta(\eta) = C_\nu\sqrt{\frac{\log^3(2kT/\eta)}{T/k}}\le C_\nu''\left\{ \sqrt{\frac{\log^3(kT) + r^6}{T/k}}\right\}.\]
  using the inequalities $(x + y)^3 \lesssim x^3 + y^3$ and $\log r \le r$ for $r \ge 1$. 
  Moreover, since $\vp(-)$ is decreasing, 
  \begin{align*}
  \int_{r-\delta(\eta)}^r \varphi(v)\,dv \le \delta(\eta)\vp\{r-\delta(\eta)\} 
  &\lesssim \delta(\eta)\exp\left\{\frac{-[r - \delta(\eta)]^2}{2}\right\} \\
  &\le \delta(\eta)\exp\left\{\frac{-r^2 + 2r\delta(\eta)}{2}\right\} \\
  &\lesssim [1-\Phi(r)][r\delta(\eta)]e^{r\delta(\eta)}.
  \end{align*}
We recover the sufficient rate condition $r\delta(\eta) \downarrow 0$, which amounts to
\[ \frac{r^2\log^3(kT) + r^8}{T/k} \downarrow 0.\]
By inspection, this also covers the $r \le 1$ case if we replace $r^2$ by $r^2\vee1$, leading to the claimed bound.
\end{proof}
\end{proposition}

\subsubsection*{Logarithmic Boundary}
The case of the logarithmic boundary requires a few more technical details due to the complexity of the functions
\[h(x) =x^2+2\log\Phi(x), \qquad \Psi_+(x) = 1 - \Phi(x) + \Phi'(x)\left[ x + \frac{\Phi'(x)}{\Phi(x)}\right].\]
To keep the exposition simple, we collect the essential facts about these functions in two lemmas.
\begin{lemma}\label{lem:boundary-transform} The function $h(x)$ satisfies $h(x) \sim x^2$ and $h'(x) \sim 2x$. It is a monotone, continuously differentiable bijection $[x_0, \infty) \to [x_0,\infty)$ for some fixed point $x_0 \in (1,\sqrt{2})$, and its inverse is monotone and continuously differentiable.  Moreover, (i) for all $x \ge x_0$, $y \ge 0$,
\(\left.\frac{d}{du}\right|_{u=h(x)+y} h^{-1}(u) \le \frac{1}{2x}\), and (ii) $h^{-1}(h(x)-y)$ is $O(1)$-Lipschitz in $x$ for $x \ge \sqrt{2y} \vee x_0$ and $y \ge 0$.
\end{lemma}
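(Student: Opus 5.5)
The plan is a direct calculus argument built on the formula
\[
h'(x) = 2x + 2\frac{\varphi(x)}{\Phi(x)}.
\]
Throughout, I use two elementary facts. The inverse Mills-type ratio $\varphi/\Phi$ is positive and decreasing. It is bounded on $[0,\infty)$ by $\varphi(0)/\Phi(0) = 2\varphi(0) < 1$, and it tends to $0$ as $x \to \infty$.

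\emph{Asymptotics.} Since $h(x) - x^2 = 2\log\Phi(x) \to 0$ and $h'(x) - 2x = 2\varphi(x)/\Phi(x) \to 0$, we get $h(x) \sim x^2$ and $h'(x) \sim 2x$. Moreover $h'(x) > 2x > 0$ for $x > 0$, so $h$ is strictly increasing and $C^1$ on $(0,\infty)$. Hence by the inverse function theorem $h^{-1}$ is monotone and $C^1$ on the image. Also $h(x) \to \infty$.

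\emph{Fixed point.} Put $f(x) = h(x) - x$. Then
\[
f'(x) = 2x - 1 + 2\varphi(x)/\Phi(x) > 0 \quad \text{for } x \ge 1/2,
\]
so $f$ has at most one zero on $[1/2,\infty)$. Numerically $f(1) = 2\log\Phi(1) \approx -0.35 < 0$, and $f(\sqrt2) = 2 - \sqrt 2 + 2\log\Phi(\sqrt2) \approx 0.42 > 0$. So there is a unique $x_0 \in (1,\sqrt2)$ with $h(x_0) = x_0$. Monotonicity, continuity and $h \to \infty$ then show that $h$ maps $[x_0,\infty)$ bijectively onto $[h(x_0),\infty) = [x_0,\infty)$.

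\emph{Claim (i).} Write $u = h(x) + y$ with $y \ge 0$, and set $w = h^{-1}(u)$. Then $w \ge x$ because $h^{-1}$ is increasing. The inverse function theorem gives
\[
(h^{-1})'(u) = \frac{1}{h'(w)} \le \frac{1}{2w} \le \frac{1}{2x}.
\]

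\emph{Claim (ii).} This is where some care is needed; I expect it to be the only real obstacle. The expression is understood on the set where $h(x) - y \ge x_0$, so that the inverse is defined. Set $w(x) = h^{-1}(h(x) - y)$, so that $w \le x$. Differentiating,
\[
\frac{dw}{dx} = \frac{h'(x)}{h'(w)} \le \frac{2x + 1}{2w},
\]
so the task is to lower-bound $w$ by a constant multiple of $x$.

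Since $\log\Phi < 0$, we have $h(w) \le w^2$. Using $y \le x^2/2$ (from $x \ge \sqrt{2y}$),
\[
w^2 \ge h(w) = h(x) - y \ge \frac{x^2}{2} + 2\log\Phi(x) \ge \frac{x^2}{2} + 2\log\Phi(x_0).
\]
Here $2\log\Phi(x_0) \ge 2\log\Phi(1) > -1$.

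I then split into two cases. If $x^2 \ge 4$, the display gives $w^2 \ge x^2/2 - 1 \ge x^2/4$. Hence $w \ge x/2$, and the derivative is at most $(2x+1)/x \le 3$. If $x < 2$, use only $w \ge x_0 > 1$ to get $dw/dx \le (2x+1)/2 < 5/2$.

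In either case the derivative is uniformly bounded, which gives the $O(1)$-Lipschitz property by the mean value theorem on the (interval) domain. The main thing to check is that the hypothesis $x \ge \sqrt{2y}$ is exactly what keeps $w$ comparable to $x$. Without it, $w$ could sit near $x_0$ while $x$ is large, and the ratio $h'(x)/h'(w)$ would be unbounded.
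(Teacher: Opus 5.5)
Your proof is correct and follows essentially the paper's route: the formula $h'(x)=2x+2\varphi(x)/\Phi(x)$, the intermediate value theorem on $(1,\sqrt{2})$ for the fixed point, and the inverse function theorem with $h^{-1}(h(x)+y)\ge x$ for (i); for (ii), you lower-bound $w=h^{-1}(h(x)-y)$ by a multiple of $x$ using $y\le x^2/2$, which bounds $h'(x)/h'(w)$. The differences are cosmetic: the paper gets $w\ge x/\sqrt{2}$ in one step from $h(x)-y\ge x^2/2+2\log\Phi(x)\ge h(x/\sqrt{2})$ rather than your case split, and your step $h'(x)\le 2x+1$ needs $\varphi/\Phi\le 1/2$ on $[x_0,\infty)$ (true by monotonicity since $x_0>1$), not the weaker bound $2\varphi(0)<1$ you cite (harmless, since any constant gives $O(1)$).
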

\begin{proof}
    The first claim follows since $\Phi(x) \to 1$ as $x \uparrow \infty$. 
    To further characterize $h$, we compute its derivative
    \[h'(x) = 2\left[x + \frac{\Phi'(x)}{\Phi(x)}\right].\] Clearly $2x \le h'(x) \le 2x + e^{-x^2/2}$, and $h$ is strictly increasing and continuous for $x \ge 1$. Since $h(1) < 1$ and $h(\sqrt{2}) > \sqrt{2}$, it follows by the intermediate value theorem that $h$ has a fixed point $x_0 \in (1,\sqrt{2})$, and both $h$ and its inverse $h^{-1}$ are well defined as functions $[x_0, \infty) \to [x_0,\infty)$.

    Next, for $x \ge x_0$, the inverse function theorem gives
    \(\left.\frac{d}{du}\right|_{h(x)} h^{-1}(u) = \frac{1}{h'(x)} \le \frac{1}{2x}.\) From this, and the implied monotonicity of $h^{-1}$, we deduce our third claim that for any $y \ge 0$,
    \[\left.\frac{d}{du}\right|_{u=h(x)+y} h^{-1}(u) = \frac{1}{h'\{h^{-1}[h(x) + y]\}} \le \frac{1}{2h^{-1}[h(x) + y] } \le \frac{1}{2x} .\]
    Finally, suppose that $y \ge 0$ and also that $x \ge \sqrt{2y} \vee x_0$. Then we have 
    \[h(x)-y \ge h(x) - x^2/2 =  x^2/2 + 2\log\Phi(x) \ge x^2/2 + 2\log\Phi(x/\sqrt{2}) = h(x/\sqrt{2}).\]
    Thus,
    \[\frac{\partial}{\partial x} h^{-1}(h(x)-y) = \frac{h'(x)}{h'\{h^{-1}[h(x) - y]\}} \le \frac{\sqrt{8}x + e^{-x^2/2}}{2h^{-1}[h(x) - y] } \le \frac{\sqrt{8}x + e^{-x^2/2}}{\sqrt{2}x } \lesssim 1.\]
    This proves our final claim.
\end{proof}
\begin{lemma}\label{lemma:psi-characterization}
For all $x \ge 1$ we have $\Psi_+(x) \sim x\Phi'(x)$ and $-\Psi_+'(x) \sim x^2\Phi'(x)$:
\begin{align*}
    (1/2x + x)\Phi'(x) &\le \Psi_+(x) \le (1/x + x + e^{-x^2/2})\Phi'(x), \\
    \quad x^2\Phi'(x) &\le -\Psi'_+(x) \le [x^2 + xe^{-x^2/2} + e^{-x^2}]\Phi'(x).
\end{align*}
In particular, if $\Psi_+(w_\alpha(k)) = \alpha/k$ for some $\alpha \in (0,1)$ and $k \ge 2$ then \[\sqrt{2\log k} \le w_\alpha(k) \le C \sqrt{\log (k/\alpha)}.\]
\end{lemma}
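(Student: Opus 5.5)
The first two displays come from computing $\Psi_+$ and $\Psi_+'$ in closed form and applying Mills-ratio bounds termwise. The bounds on $w_\alpha(k)$ then follow from monotonicity of $\Psi_+$. Throughout I write $\varphi=\Phi'$ and use $\varphi'(x)=-x\varphi(x)$.

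\emph{Step 1: the bounds on $\Psi_+$.} For $x\ge1$ I will use the Mills-ratio sandwich
\[
\frac{x}{1+x^2}\,\varphi(x)\le 1-\Phi(x)\le \frac{\varphi(x)}{x},
\qquad \frac{x}{1+x^2}\ge \frac{1}{2x}\ \text{for } x\ge1 .
\]
I will also use
\[
\frac{\varphi(x)}{\Phi(x)}\le \frac{e^{-x^2/2}}{\sqrt{2\pi}\,\Phi(1)}\le \tfrac12 e^{-x^2/2},
\]
which holds since $\sqrt{2\pi}\,\Phi(1)\approx 2.1$. The lower bound on $\Psi_+$ then follows by dropping the nonnegative term $\varphi^2/\Phi$ and using the lower Mills bound. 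The upper bound follows by using $1-\Phi\le\varphi/x$ and $\varphi/\Phi\le e^{-x^2/2}$.

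\emph{Step 2: the bounds on $-\Psi_+'$.} Differentiating directly, the terms $\pm\varphi$ cancel, and I expect
\[
-\Psi_+'(x)=\varphi(x)\Bigl[x^2+2x\tfrac{\varphi(x)}{\Phi(x)}+\tfrac{\varphi(x)^2}{\Phi(x)^2}\Bigr].
\]
The lower bound $x^2\varphi$ is immediate from this formula. For the upper bound, $2x\varphi/\Phi\le x e^{-x^2/2}$ and $(\varphi/\Phi)^2\le e^{-x^2}$ by the estimate in Step 1. This formula also shows that $\Psi_+$ is strictly decreasing on all of $\mathbb R$, since the bracket equals $(x+\varphi/\Phi)^2\ge 0$. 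Together with $\Psi_+(x)\to0$ as $x\to\infty$, this gives the uniqueness of $w_\alpha$.

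\emph{Step 3: the bounds on $w_\alpha(k)$.} Because $\Psi_+$ is decreasing, it suffices to evaluate $\Psi_+$ at the two candidate endpoints.

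For the upper bound, Step 1 gives $\Psi_+(x)\le (x+2)\varphi(x)\le C'e^{-x^2/4}$. At $x=C\sqrt{\log(k/\alpha)}$ this is at most $\alpha/k$ once $C$ is large, so $w_\alpha\le x$. The case in which $w_\alpha<1$, where the Step 1 bounds are unavailable, is trivial, since $\log(k/\alpha)\ge\log 2$.

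For the lower bound, put $x_k=\sqrt{2\log k}$. Step 1 gives
\[
\Psi_+(x_k)\ge \Bigl(x_k+\tfrac{1}{2x_k}\Bigr)\frac{1}{\sqrt{2\pi}\,k},
\]
and this is at least $1/k>\alpha/k$ as soon as $x_k+1/(2x_k)\ge\sqrt{2\pi}$, i.e.\ $k\ge e^{\pi}$ roughly. Monotonicity then gives $w_\alpha\ge x_k$.

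\emph{Main obstacle.} I expect the small-$k$ regime of the lower bound to be the main difficulty. A numerical check shows the claim can genuinely fail there: at $k=2$ one finds $\Psi_+(\sqrt{2\log 2})\approx0.40<\alpha/2$ for $\alpha$ near $1$. So I would state and prove the lower bound for $k\ge k_0$ with an explicit $k_0\approx 24$, or for $\alpha\le\alpha_0$. This suffices for the asymptotic use in Theorem~\ref{thm:max-test}, where $k_T\to\infty$. Alternatively, one can weaken the conclusion to $w_\alpha(k)\ge c\sqrt{\log k}$.
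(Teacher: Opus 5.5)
Your proof follows the same route as the paper's. Both use the Mills-ratio sandwich $\varphi/(2x)\le 1-\Phi\le\varphi/x$ on $x\ge1$ for the bounds on $\Psi_+$, the explicit derivative for the bounds on $-\Psi_+'$, and evaluation of $\Psi_+$ at the candidate endpoints for $w_\alpha(k)$. Your perfect-square form $-\Psi_+'=\varphi\,(x+\varphi/\Phi)^2$ makes explicit the monotonicity the paper uses tacitly.

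Two of your departures repair the paper's write-up. The paper bounds the derivative using only $\Phi'\le\tfrac12 e^{-x^2/2}$ and $\Phi\ge\tfrac12$. Those give $\varphi/\Phi\le e^{-x^2/2}$, hence a middle term $2xe^{-x^2/2}$. Your estimate $\sqrt{2\pi}\,\Phi(1)\ge2$ is what delivers the stated coefficient $xe^{-x^2/2}$.

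More importantly, the paper justifies $w_\alpha(k)\ge\sqrt{2\log k}$ by one claim: plugging $x_k=\sqrt{2\log k}$ into the lower bound for $\Psi_+$ gives a value larger than $1/k$. That requires $x_k+1/(2x_k)\ge\sqrt{2\pi}$, i.e.\ $k\ge14$. Your counterexample is correct: $\Psi_+(\sqrt{2\log 2})\approx0.3996$, so $w_\alpha(2)<\sqrt{2\log 2}$ for $\alpha>0.8$. The statement is false as written, and a restriction like yours is needed.

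You can make the repair sharp. Since $k\varphi(x_k)=(2\pi)^{-1/2}$,
\[
k\Psi_+(x_k)=\frac{1}{\sqrt{2\pi}}\Bigl[x_k+\frac{1-\Phi(x_k)}{\varphi(x_k)}+\frac{\varphi(x_k)}{\Phi(x_k)}\Bigr].
\]
The $x$-derivative of the bracket is $x(1-\Phi)/\varphi-(\varphi/\Phi)(x+\varphi/\Phi)$. It is positive for $x\ge1$: the first term is at least $x^2/(1+x^2)\ge\tfrac12$, and the second is decreasing there with value about $0.37$ at $x=1$. So $k\Psi_+(x_k)$ increases in $k\ge2$. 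Numerically it is about $0.80$ at $k=2$, $0.9996$ at $k=8$, and $1.017$ at $k=9$. Hence the lower bound holds for all $\alpha\in(0,1)$ exactly when $k\ge9$, and for all $k\ge2$ whenever $\alpha\le0.79$.

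Prefer the $\alpha$-restricted form. Theorem~\ref{thm:max-test} assumes only $T/[k_T\log^4(Tk_T)]\to\infty$ and does not force $k_T\to\infty$. So your justification that $k\ge k_0$ suffices there because $k_T\to\infty$ is not accurate. The $\alpha$-restricted version instead covers bounded $k_T$ at any conventional level.
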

\begin{proof}
   Note that by the Mill's ratio inequality we have for all $x \ge 1$ that
   \[\frac{e^{-x^2/2}}{6x} \le \frac{\Phi'(x)}{2x} \le 1-\Phi(x) \le \frac{\Phi'(x)}{x} \le \frac{e^{-x^2/2}}{2x}. \]
   Thus, for $x \ge 1$, since $\Phi'(x) \le \frac{1}{2}e^{-x^2/2}$ and $1/2 \le \Phi(x) \le 1$, we have
   \[ \left(\frac{1}{2x} + x \right)\Phi'(x) \le 1 - \Phi(x) + x\Phi'(x) + \Phi'(x)^2/\Phi(x) \le \left(\frac{1}{x} + x + e^{-x^2/2}\right)\Phi'(x).\]
    Since the quantity in the middle is $\Psi_+(x)$, the bounds follow immediately. 

    The bounds on critical values follow: for $k \ge 2$, $\sqrt{2\log k} \ge 1$, and plugging in to the lower bound gives a value larger than $1/k$. For the upper bound, it is immediate that our upper bound on $\Psi_+$ is at most $e^{-c x^2}$ some small $c > 0$, so $w_\alpha(k) \lesssim \sqrt{\log(k/\alpha)}$.
    
    To bound $-\Psi_+'$, the observation $\Phi''(x) = -x\Phi'(x)$ implies
    \[-\frac{d}{dx} \left[ 1 - \Phi(x) + x\Phi'(x) + \frac{\Phi'(x)^2}{\Phi(x)} \right]=  x^2\Phi'(x) + \frac{2x\Phi'(x)^2}{\Phi(x)} + \frac{\Phi'(x)^3}{\Phi(x)^2}.\]
    We conclude again by repeatedly applying $\Phi'(x) \le \frac{1}{2}e^{-x^2/2}$ and $1/2 \le \Phi(x) \le 1$. 
\end{proof}
\begin{proposition}[Moderate deviations upper bound for logarithmic boundary] \label{prop:mdp-log} Given sequences $\{r_T\}_{T \ge 1}$ and $\{k_T\}_{T \ge 1}$ with $r_T \ge \sqrt{2\log k_T}$, and any start time $q_0$ with $T/k_T \le q_0 \le T$, put
\[\xi_T^2 = \frac{r_T^2\log^3(k_TT) + r_T^8}{T/k_T}. \]
Then, there exists a universal constant $C > 0$ such that
\[\mathbb{P}\left(\max_{q_0 \le q \le T}\hat Z'_g(q)- h^{-1}[\log(q/q_0) + h(r_T)] >0\right) \le [1 + C(T^{-1} + \xi_T e^{\xi_T})]\Psi_+(r_T)\]
In particular, the right-hand side is bounded as $[1 + o(1)]\Psi_+(r_T)$ whenever $\xi_T \downarrow 0$.

\end{proposition}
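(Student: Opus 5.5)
The plan mirrors the proof of Proposition~\ref{prop:mdp-lin}: work on the coupling space $\mathbb{P}^\flat$ of Lemma~\ref{lem:t-stat-brownian-embedding}, reduce the boundary crossing for $\hat Z'_g$ to one for Brownian motion with a perturbed boundary, evaluate the Brownian probability via \eqref{eq:complex-rs-boundary}, and convert the additive coupling error into a multiplicative one using the local behaviour of $\Psi_+$ from Lemma~\ref{lemma:psi-characterization}.

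\textbf{Step 1: Reduction to Brownian motion.} On an event of probability $1-\eta$ we have, uniformly for $q_0\le q\le T$,
\[
\hat Z'_g(q)\le W_g(q)/\sqrt q+\delta(\eta), \qquad \delta(\eta)=C_\nu\sqrt{k\log^3(2kT/\eta)/T}.
\]
On this event, a crossing by $\hat Z'_g$ of the boundary $b_r(q)\coloneqq h^{-1}[\log(q/q_0)+h(r)]$ implies $W_g(q)/\sqrt q>b_r(q)-\delta$ for some $q\ge q_0$. We then absorb $\delta$ into a shift of the level parameter. By Lemma~\ref{lem:boundary-transform}(ii), the map $x\mapsto h^{-1}(h(x)+y)$ has derivative $h'(x)/h'\{h^{-1}[h(x)+y]\}\le (2x+e^{-x^2/2})/(2x)\le 1+O(e^{-x^2/2}/x)$. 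Hence
\[
b_{r-c\delta}(q)\le b_r(q)-\delta \quad\text{for a constant } c=1+o(1) \text{ (say } c=2\text{), since } r\ge\sqrt{2\log k}\ge 1.
\]
After rescaling $t=q/q_0$ (Brownian scaling), \eqref{eq:complex-rs-boundary} gives
\[
\mathbb{P}\{\exists q\ge q_0: W(q)/\sqrt q> b_{r-c\delta}(q)\}\le\Psi_+(r-c\delta).
\]
Extending to $q>T$ only enlarges the event, so this bound suffices. Therefore
\[
\mathbb{P}\{\text{crossing}\}\le \eta+\Psi_+(r-c\delta).
\]
This step replaces Lemma~\ref{lem:non-uniform-approximation-from-coupling}, which is not directly usable because the boundary is not a single threshold on a scalar.

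\textbf{Step 2: Converting to multiplicative error.} Write
\[
\Psi_+(r-c\delta)-\Psi_+(r)\le c\delta\sup_{[r-c\delta,r]}(-\Psi_+').
\]
By Lemma~\ref{lemma:psi-characterization}, $-\Psi_+'(x)\lesssim x^2\varphi(x)$ and $\Psi_+(r)\gtrsim r\varphi(r)$. Moreover,
\[
\varphi(r-c\delta)\le\varphi(r)e^{cr\delta}.
\]
Combining these, the difference is at most $C\,r\delta\,e^{Cr\delta}\,\Psi_+(r)$, provided $r-c\delta\ge1$; this holds once $\xi_T$ is small, and otherwise the bound is trivial after enlarging $C$.

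\textbf{Step 3: Choice of $\eta$.} Take $\eta=T^{-1}\Psi_+(r)\asymp T^{-1}r\varphi(r)$. Then $\log(1/\eta)\lesssim\log T+r^2$, so
\[
\delta(\eta)^2\lesssim \frac{\log^3(kT)+r^6}{T/k},
\]
and hence $r\delta\lesssim\xi_T$. Collecting terms gives $[1+C(T^{-1}+\xi_Te^{\xi_T})]\Psi_+(r_T)$, with constants depending on $\nu$ absorbed as in Proposition~\ref{prop:mdp-lin}.

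\textbf{Main obstacle.} The delicate step is Step~1's boundary shift. One needs the additive perturbation $\delta$ to be dominated by a change of level $r\mapsto r-O(\delta)$ uniformly over all $q\ge q_0$. This uses monotonicity of $h^{-1}$ and the derivative bound in Lemma~\ref{lem:boundary-transform}, and one must check that $r-c\delta$ stays in the domain $[x_0,\infty)$ where $h^{-1}$ is defined. The assumption $r_T\ge\sqrt{2\log k_T}$, together with $\xi_T\to0$, should guarantee this for large $T$. Should the constant $c$ fail to be uniform near $x_0$, I would simply treat bounded $r$ separately, as in the $r\le1$ case of Proposition~\ref{prop:mdp-lin}.
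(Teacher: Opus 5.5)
\textbf{Gap in Step 1: the level shift is justified in the wrong direction.} Write $F_y(x)=h^{-1}(h(x)+y)$ with $y=\log(q/q_0)$. Then $b_r(q)-b_{r-c\delta}(q)=\int_{r-c\delta}^{r}F_y'(x)\,dx$. So the inequality $b_{r-c\delta}(q)\le b_r(q)-\delta$ needs a \emph{lower} bound $F_y'\ge 1/c$.

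The estimate you take from Lemma~\ref{lem:boundary-transform}, $F_y'\le 1+O(e^{-x^2/2}/x)$, is an upper bound. It only gives $b_r-b_{r-c\delta}\le c\delta\,(1+o(1))$, which is useless here. In fact $c=1+o(1)$ is false. Since $F_y(x)\ge x$, we have $F_y'(x)=h'(x)/h'(F_y(x))\approx x/\sqrt{x^2+y}$, which is about $\sqrt{2/3}$ at $x=\sqrt{2\log k}$, $y=\log k$.

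The shift also cannot hold ``uniformly over all $q\ge q_0$,'' as your main-obstacle paragraph asks. For fixed $c$, $b_r(q)-b_{r-c\delta}(q)\approx rc\delta/\sqrt{\log(q/q_0)}$, which drops below $\delta$ once $q$ is large enough.

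\textbf{The repair.} This is exactly where the hypothesis $r_T\ge\sqrt{2\log k_T}$ does its work; you use it only to get $r\ge 1$ and for a domain check.

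\begin{itemize}
\item You only need the shift for $q_0\le q\le T$. The coupling lives there, and the enlargement to $q>T$ happens afterwards on the Brownian side. Hence $y\le\log(T/q_0)\le\log k\le r^2/2$.
\item Put $z=F_y(x)$. The identity $h(z)=h(x)+y$ together with $\Phi(z)\ge\Phi(x)$ gives $z^2\le x^2+y$.
\item Therefore $F_y'(x)\ge 2x/(2\sqrt{x^2+y}+1)$. For $x\in[r-c\delta,r]$, $x\ge 1$ and $\delta$ small, this is bounded below by an absolute constant exceeding $1/2$, so $c=2$ does work.
\end{itemize}

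This lower bound is the same fact as Lemma~\ref{lem:boundary-transform}(ii): the map $x\mapsto h^{-1}(h(x)-y)$, the inverse of $F_y$, is $O(1)$-Lipschitz for $x\ge\sqrt{2y}$. That is what the paper uses.

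\textbf{Comparison with the paper.} With this fix your route is valid and slightly cleaner than the paper's. The paper maps each path through $u(l_{\underline r}(\cdot),\log(q/q_0))$, truncating at $\underline r=\sqrt{2\log k}$ so that $u$ is Lipschitz along the \emph{random} paths. This turns the crossing into a scalar threshold event, to which Lemma~\ref{lem:non-uniform-approximation-from-coupling} \emph{does} apply, contrary to your remark. The paper then removes the truncation via $u(l_{\underline r}(x),y)\le r\iff u(x,y)\le r$. Your level shift needs derivative control only on the deterministic interval $[r-c\delta,r]$, so it avoids the truncation step entirely. Your Steps 2 and 3 then coincide with the paper's final computation.
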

\begin{proof}
We again work in the probability space $\mathbb{P}^\flat$ from Lemma \ref{lem:t-stat-brownian-embedding}, and omit the $\flat$ symbol to simplify notation. Let $r > \sqrt{2\log k} \eqqcolon \underline{r}$ be given and define $l_{\underline r}(x) = x \vee \underline{r}$. Lemma \ref{lem:t-stat-brownian-embedding} (which applies a fortiori on the subinterval $[q_0,T] \subset [T/k,T]$), along with the fact that $l_{\underline r}$ is $1$-Lipschitz, gives that w.p.~$1-\eta$,
\[\max_{q_0 \le q \le T} \left| l_{\underline{r}}(\hat Z'_g(q)) -  l_{\underline{r}}\left(\frac{{\,W}_g(q)}{\sqrt q}\right) \right|  \le \max_{q_0 \le q \le T} \left|   \hat Z'_g(q) -  \frac{{\,W}_g(q)}{\sqrt q} \right| \le  C_\nu\left( \frac{\log^{3/2}(kT/\eta)}{\sqrt{T/k}}\right) .\] Note that the mapping $x \mapsto u(x,y) = h^{-1}(h(x) - y)$ is $O(1)$-Lipschitz for $x \ge \sqrt{2y}$ (Lemma \ref{lem:boundary-transform}) and $q / q_0 \le T/q_0 \le k$ for $q_0 \le q \le T$, so that we have ensured \(l_{\underline r}(-) \ge \underline{r} = \sqrt{2\log k} \ge \sqrt{2\log(q/q_0)}.\) Thus, it holds w.p.~$1-\eta$,
\begin{align*}
    & \left| \max_{q_0 \le q \le T}u\{l_{\underline{r}}(\hat Z'_g(q)),\log(q/q_0)\} - \max_{q_0 \le q \le T} u\left\{l_{\underline{r}}\left(\frac{{\,W}_g(q)}{\sqrt q}\right), \log(q/q_0)\right\} \right| \\
    & \qquad \le \max_{q_0 \le q \le T} \left| u\{l_{\underline{r}}(\hat Z'_g(q)),\log(q/q_0)\} -  u\left\{l_{\underline{r}}\left(\frac{{\,W}_g(q)}{\sqrt q}\right), \log(q/q_0)\right\} \right|  \\
    & \qquad \le C\max_{q_0 \le q \le T} \left| l_{\underline{r}}(\hat Z'_g(q)) -  l_{\underline{r}}\left(\frac{{\,W}_g(q)}{\sqrt q}\right) \right|\le  \delta(\eta),
\end{align*}
for $\delta(\eta) = C_\nu'\left( \frac{\log^{3/2}(kT/\eta)}{\sqrt{T/k}}\right)$, where the last inequality follows from the preceding display. This further implies that w.p.~$1-\eta$,
\begin{align*}
    \max_{q_0 \le q \le T}u\{l_{\underline{r}}(\hat Z'_g(q)),\log(q/q_0)\}
    &\le \max_{q_0 \le q \le T} u\left\{l_{\underline{r}}\left(\frac{{\,W}_g(q)}{\sqrt q}\right), \log(q/q_0)\right\} + \delta(\eta) \\
    &\le \sup_{t > q_0 } u\left\{l_{\underline{r}}\left(\frac{{\,W}_g(t)}{\sqrt t}\right), \log(t/q_0)\right\} + \delta(\eta),
\end{align*}
where in the last step we relaxed the maximum over integers $q_0 \le q \le T$ to be over real numbers $t > q_0$. Following Lemma \ref{lem:non-uniform-approximation-from-coupling}, we have
\begin{equation}\label{eq:log-nonuniform-comparison}
\begin{split}
    \mathbb{P}\left(\max_{q_0 \le q \le T}u\{l_{\underline{r}}(\hat Z'_g(q)),\log(q/q_0)\}  > r\right) &\le \mathbb{P}\left(\sup_{t > q_0 } u\left\{l_{\underline{r}}\left(\frac{{\,W}_g(t)}{\sqrt t}\right), \log(t/q_0)\right\} > r\right)  + \eta \\ &  + \mathbb{P}\left(\sup_{t > q_0 } u\left\{l_{\underline{r}}\left(\frac{{\,W}_g(t)}{\sqrt t}\right), \log(t/q_0)\right\} \in (r - \delta(\eta), r]\right)
    \end{split}
\end{equation}

Next, we show that for any $x,y > 0$, we have 
\( u(l_{\underline r}(x),y) \le r \iff u(x,y ) \le r.\)
Note that we have chosen $\underline r < r$ and that $u$ is increasing in its first argument, decreasing in its second argument, and satisfies $ u(x,0) = x$ for any $x$. Thus $u(l_{\underline r}(x),y) \le r$ immediately implies $u(x,y) \le r$. Conversely, suppose that $u(x,y) \le r$. Then, either (i) $x \ge \underline{r}$ in which case $u(l_{\underline r}(x),y) = u(x,y) \le r$ or else (ii) $x < \underline{r}$ so that $l_{\underline r}(x) = \underline r$ and $u(l_{\underline r}(x),y) = u( \underline{r},y) \le u(\underline r,0) = \underline r < r$.

Using these observations along with the definition $u(x,y) = h^{-1}[h(x)-y]$ we have for any sequence $\{x_q\}_{q \ge 1}$
\begin{equation}
\label{eq:truncation}
\begin{split}
   \max_{q_0 \le q \le T} u\left\{l_{\underline{r}}(x_q), \log(\nicefrac{q}{q_0})\right\} > r
    &\iff \max_{q_0 \le q \le T} u\left\{x_q, \log(\nicefrac{q}{q_0})\right\} > r  \\
     &\iff \max_{q_0 \le q \le T} x_q - h^{-1}[\log(\nicefrac{q}{q_0}) + h(r)] > 0
\end{split}
\end{equation}
Applying \eqref{eq:truncation},
\begin{align*}
    \mathbb{P}\left(\sup_{t > q_0} u\left\{l_{\underline{r}}\left(\frac{{\,W}_g(t)}{\sqrt t}\right), \log(\nicefrac{t}{q_0})\right\} > r \right)
    &= \mathbb{P}\left(\sup_{t > q_0} \frac{{W}_g(t)}{\sqrt t} - h^{-1}[\log(\nicefrac{t}{q_0}) + h(r)] > 0\right).
\intertext{Using the scaling property $\{W_{g}(at)\}_{t \ge 0} \sim \{\sqrt{a}W(t)\}_{t \ge 0}$ with $a = q_0^{-1}$ then gives}
&= \mathbb{P}\left(\sup_{t > q_0} \frac{{W}_g((\nicefrac{t}{q_0}))}{\sqrt {(\nicefrac{t}{q_0})}}- h^{-1}[\log(\nicefrac{t}{q_0}) + h(r)] > 0\right) \\
&= \mathbb{P}\left(\sup_{s >1 } \frac{{W}_g(s)}{\sqrt {s}} - h^{-1}[\log(s) + h(r)] > 0 \right).
\end{align*}
where the last line takes $s = q_0^{-1}t$ and is crucially independent of $q_0$. Applying \eqref{eq:complex-rs-boundary}, we finally get
\begin{equation}
     \mathbb{P}\left(\sup_{t > q_0} u\left\{l_{\underline{r}}\left(\frac{{\,W}_g(t)}{\sqrt t}\right), \log(\nicefrac{t}{q_0})\right\} > r \right) = \Psi_+(r). \label{eq:wiener-remove-truncation}
\end{equation}
Using \eqref{eq:truncation} again, we similarly obtain
\begin{equation}
   \mathbb{P}\left(\max_{q_0 \le q \le T}u\{l_{\underline{r}}(\hat Z'_g(q)),\log(q/q_0)\}  > r\right) = \mathbb{P}\left(\max_{q_0 \le q \le T}\hat Z'_g(q)- h^{-1}[\log(\nicefrac{q}{q_0}) + h(r)] > 0 \right) \label{eq:tstat-remove-truncation}
\end{equation}
Plugging \eqref{eq:wiener-remove-truncation} and \eqref{eq:tstat-remove-truncation} into \eqref{eq:log-nonuniform-comparison} and using the fundamental theorem of calculus, we get
\begin{align*} \mathbb{P}\left(\max_{q_0 \le q \le T}\hat Z'_g(q)- h^{-1}[\log\nicefrac{q}{q_0} + h(r)] > 0\right) - \Psi_+(r)
&\le \eta + \int_{r-\delta(\eta)}^r - \Psi_+'(s)\,ds \\
&\le \eta + C\delta(\eta)r^2e^{-[r-\delta(\eta)]^2/2},
\end{align*}
where we have used Lemma \ref{lemma:psi-characterization} to bound $-\Psi_+'(-)$ on the right-hand side. Choosing $\eta = T^{-1}e^{-r^2/2} \lesssim T^{-1}\Psi_+(r)$ we have
\[\delta(\eta) \le C_\nu\sqrt{\frac{\log^3(2kT/\eta)}{T/k}}\lesssim C_\nu\left\{ \sqrt{\frac{\log^3(kT) + r^6}{T/k}}\right\}.\]
We then bound
\[\delta r^2\exp\{-[r-\delta]^2/2\} \le \delta r^2e^{-r^2/2 + \delta r} \lesssim \delta r e^{\delta r} \Psi_+(r),\] again using the bound of Lemma \ref{lemma:psi-characterization}. This yields the final bound 
\[\mathbb{P}\left(\max_{q_0 \le q \le T}\hat Z'_g(q)- h^{-1}[\log\nicefrac{q}{q_0} + h(r)] > 0\right)  - \Psi_+(r)\lesssim [T^{-1} + \xi e^\xi]\Psi_+(r)\]
for
\( \delta(\eta)r \lesssim C_\nu\left\{ \sqrt{\frac{r^2\log^3(kT) + r^8}{T/k}}\right\} \eqqcolon \xi,\) as claimed.
\end{proof}

\subsubsection*{Proof of Theorem \ref{thm:max-test}} For the first claim, note that under the null $\mu_g \le 0$, the centered statistic $\hat Z'_g(q)$ considered by Propositions \ref{prop:mdp-lin} and \ref{prop:mdp-log} almost surely exceeds  $\hat Z_g(q)$:
\[
\hat Z_g(q)
=
\hat Z'_g(q) + \frac{\sqrt q\,\mu_g}{\hat\sigma_g(q)}
\le
\hat Z'_g(q).
\] Recall that the tests in Theorem \ref{thm:max-test} are indexed by a parameter $\zeta \ge 1$, with the max taken over arms $g \in \mathfrak{K}(t,\zeta)$, i.e., $N_g(t) \ge \zeta T/k$. We therefore instantiate Propositions \ref{prop:mdp-lin} and \ref{prop:mdp-log} with start time $q_0 = \zeta T/k$; since $\zeta \ge 1$, the condition $T/k_T \le q_0 \le T$ holds for all $T$ large enough. Moreover, by Lemma \ref{lemma:psi-characterization} we have $\sqrt{2\log k} \le w_\alpha(k_T) \lesssim \sqrt{\log{k}}$. Therefore, taking $r_T = w_\alpha(k_T)$ in the setting of Proposition \ref{prop:mdp-log}, it is clear that \[T/[k_T\log^{4}(Tk_T)] \uparrow \infty \implies \xi_T^2 = \frac{r_T^2\log^3(k_TT) + r_T^8}{T/k_T} \to 0,\]
which implies by a union bound that under the null,
\[\mathbb{E}(A_{\textrm{log}}) \le \sum_{g = 1}^k \frac{\alpha}{k}[1+o(1)] = [1 + o(1)]\alpha.\]
The $1 - \alpha/(2k)$ quantile of the standard normal distribution, which is $z_\alpha(k)$, is known to be $O(\sqrt{\log\nicefrac{k}{\alpha}})$; this follows, e.g.,~from the Mill's ratio inequality $1 - \Phi(x) \lesssim x^{-1}e^{-x^2/2}$. Thus, Proposition \ref{prop:mdp-lin} can similarly be applied (with the same $q_0 = \zeta T/k$) to deduce that
\(\mathbb{E}(A_{\textrm{lin}}) \le [1 + o(1)]\alpha.\)

For claim (ii), note that 
\begin{align*}
    h^{-1}\{h[w_{\alpha}(k_T)] + \log(qk/T)\} &\le h^{-1}\{h[w_{\alpha}(k_T)] + \log k\} \\ &= w_{\alpha}(k_T) + \int_{h[w_\alpha(k_T)]}^{h[w_\alpha(k_T)]+\log k}  [h^{-1}]'(s)\,ds.
\end{align*}
By Lemma \ref{lem:boundary-transform}, since $w_\alpha(k_T) \ge \sqrt{2\log k_T}$, the integrand is bounded by $(\log k_T)^{-1/2}$ uniformly over the domain of the integral. Thus the integral, and hence the left hand side, is also bounded by $\sqrt{\log k_T}$, as claimed.
\section{Section 4 Proofs}
\subsection{Confidence bound}

The goal of this subsection is to prove the following two-sided confidence bound for $\mu_g/\sigma_g$. 
\begin{proposition}\label{prop:self-normalized-cb}
For any $\beta > 0$, define
\(
\tau_\beta(t,s) \coloneqq 4.5\,\nu^2\sqrt{\beta\log(4t)/s}
\) and 
\(
E_g(t,s) \coloneqq \bigl(1 + |\hat Z_g(s)|/\sqrt{s}\bigr)\,\tau_\beta(t,s),
\)
and
\[
    e^+_g(t,s) \coloneqq
    \begin{cases}
        \tau_\beta(t,s)\,(1 + |\mu_g/\sigma_g|)/\bigl(1 - \tau_\beta(t,s)\bigr) & \text{if } \tau_\beta(t,s) < 1, \\
        +\infty & \text{otherwise.}
    \end{cases}
\]
Then there exists a constant $C_\nu > 0$ depending only on $\nu$ such that
\begin{equation}
    \mathbb{P}\left( \left|\frac{\mu_g}{\sigma_g} - \frac{\hat Z_g(s)}{\sqrt{s}} \right| < E_g(t,s) \le e^+_g(t,s) \right) \ge 1 -4t^{-\beta}. \label{eq:main-cb}
\end{equation}
\end{proposition}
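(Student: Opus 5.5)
We would prove \eqref{eq:main-cb} for a fixed arm $g$ and a fixed sample count $s$, working with the first $s$ draws of arm $g$. By \Cref{lemma:iid-representation} these draws are i.i.d.\ from $F_g$. Write $z=\mu_g/\sigma_g$, $\bar Y = s^{-1}\sum_{i\le s} (X_i-\mu_g)/\sigma_g$, and $V = s^{-1}\sum_{i\le s}(X_i-\mu_g)^2/\sigma_g^2$. Then
\[
\frac{\hat\sigma_g(s)^2}{\sigma_g^2} = V - \bar Y^2 =: r^2,
\qquad
\frac{\hat Z_g(s)}{\sqrt{s}} = \frac{z+\bar Y}{r}.
\]

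\textbf{Step 1: a good event.} Apply \Cref{lemma:iid-concentration} with $x=\beta\log(4t)$ and a union bound over the two-sided mean inequality \eqref{eq:concentration-standardized-mean} (two tails) and the two variance inequalities \eqref{eq:concentration-standardized-cov-ub}--\eqref{eq:concentration-standardized-cov-lb} (one tail each). This costs at most $4e^{-x}\le 4t^{-\beta}$. On the resulting event $\mathcal G$ we have $|\bar Y|\le \nu\sqrt{2x/s}$ and $|V-1|\lesssim \nu^2(\sqrt{x/s}+x/s)$.

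Set $\epsilon := \tau_\beta(t,s)/4.5 = \nu^2\sqrt{x/s}$. Since $\nu\ge1$ and the constant $4.5$ is generous, on $\mathcal G$ we have $|\bar Y|\le c_1\epsilon$. When $\epsilon\le 1$ we also get $|r^2-1|\le c_2\epsilon$, and hence $|r-1|\le c_2\epsilon$, using $|1-r|\le|1-r^2|$ as in the proof of \Cref{lem:consistent-estimator-variance}. The first concrete task is to check that $c_1$, $c_2$ and $4.5$ fit together. If they do not cleanly, the fallback is to replace $\kappa(\nu)\le 6.8\nu^4$ by the cruder sub-gamma bound.

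\textbf{Step 2: the first inequality, $|z-\hat Z_g/\sqrt s|<E_g$.} Write $\hat z=\hat Z_g(s)/\sqrt s$. The key identity is
\[
z - \hat z = \hat z\,(r-1) - \bar Y ,
\]
which follows from $z+\bar Y = r\hat z$. It gives $|z-\hat z|\le |\hat z|\,|r-1| + |\bar Y|\le (c_2|\hat z| + c_1)\epsilon$, and this is at most $(1+|\hat z|)\tau_\beta = E_g$ provided $c_1,c_2< 4.5$.

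When $\tau_\beta\ge1$ the bound must hold without the variance control. In that case I would argue directly from $r\ge$ something, or more simply note that $(1+|\hat z|)\tau_\beta\ge 1+|\hat z|$. This is the awkward case and may need $|z|$ handled via $|\bar Y|$ alone, and I expect it to be the main obstacle. The first thing I would try is to restrict attention to $\epsilon<1/4.5$, i.e.\ $\tau_\beta<1$. There $r\ge 1-c_2\epsilon>0$ and everything is clean, and the second inequality is trivial otherwise because $e_g^+=\infty$. For the first inequality in the regime $\tau_\beta\ge1$, I would use $|\bar Y|\le c_1\epsilon\le\tau_\beta$ together with the fact that $r^2 = V-\bar Y^2$ is the empirical variance, so that $|z-\hat z|\le |z|\,|1-1/r| + |\bar Y|/r$. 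This may force a separate crude argument or an adjustment of the constant.

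\textbf{Step 3: the second inequality, $E_g\le e_g^+$.} This is pure algebra on $\mathcal G$ when $\tau_\beta<1$. From Step 2, $|\hat z|\le |z| + |z-\hat z| < |z| + (1+|\hat z|)\tau_\beta$. Rearranging gives $(1+|\hat z|)(1-\tau_\beta) < 1+|z|$. Multiplying by $\tau_\beta/(1-\tau_\beta)$ yields $E_g=(1+|\hat z|)\tau_\beta \le \tau_\beta(1+|z|)/(1-\tau_\beta)=e_g^+$. Collecting the failure probabilities gives the $1-4t^{-\beta}$ bound.
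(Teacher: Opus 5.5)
Your overall architecture matches the paper's proof. The good event comes from the same three inequalities of Lemma~\ref{lemma:iid-concentration} at $x=\beta\log(4t)$. Your identity $z-\hat z=\hat z\,(r-1)-\bar Y$ is exactly the paper's $R_g(s)-z_g=m_s+R_g(s)\bigl(1-\sqrt{\hat v_s}\bigr)$, and your Step~3 rearrangement giving $E_g\le e^+_g$ is correct.

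The genuine gap is that you never establish $|z-\hat z|<E_g$ when $\tau_\beta\ge1$. That case cannot be dropped: Proposition~\ref{prop:agnostic-ucb} needs the optimal arm's index to be optimistic at every sample count, including small ones where $\tau_\beta\ge1$. The route you sketch, $|z|\,|1-1/r|+|\bar Y|/r$, is a dead end, because in that regime nothing keeps $r$ away from $0$.

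The missing observation is that your own identity needs only $|r-1|\le\tau_\beta$, never a lower bound on $r$. The lower half of that is free: $1-r\le1\le\tau_\beta$ simply because $r\ge0$. The other two ingredients hold on $\mathcal G$ for every $\epsilon$, not just $\epsilon\le1$: $|\bar Y|\le\sqrt2\,\epsilon/\nu$, and $r-1\le4\epsilon$ (shown in the next paragraph). Hence $|z-\hat z|\le\sqrt2\,\epsilon+|\hat z|\,\tau_\beta<(1+|\hat z|)\tau_\beta$ in all regimes.

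Second, the constant check you flag does fail as you set it up, on the upper side. On $\mathcal G$, \eqref{eq:concentration-standardized-cov-ub} gives $r^2-1\le V-1\le\sqrt{32}\,\epsilon+2\epsilon^2$. Passing through $|1-r|\le|1-r^2|$ therefore leaves a constant $\sqrt{32}\approx5.66>4.5$. The paper avoids this by completing the square: $\sqrt{32}\,\epsilon+2\epsilon^2\le8\epsilon+16\epsilon^2=(1+4\epsilon)^2-1$, so $r\le1+4\epsilon<1+\tau_\beta$ for all $\epsilon$.

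On the lower side with $\tau_\beta<1$ your crude inequality does suffice. For $r\le1$ and $\epsilon<1/4.5$,
\[
1-r\le1-r^2=(1-V)+\bar Y^2\le 4.21\,\epsilon .
\]
That margin below $4.5$ exists only because of Bennett's inequality with $\kappa(\nu)\le6.8\nu^4$. Your proposed fallback to a cruder sub-gamma bound therefore goes in the wrong direction and would break the lower side as well.
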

We now establish a deviation bound that underlies Proposition~\ref{prop:self-normalized-cb}. Let $R_g(s) \coloneqq \hat Z_g(s)/\sqrt s$ denote the empirical signal-to-noise estimator at sample size $s$, so that $R_g(s) \to z_g \coloneqq \mu_g/\sigma_g$ in probability as $s \to \infty$.
\begin{lemma}[Unified self-normalized deviation bound]\label{lemma:unified-cb}
    Suppose Assumption~\ref{assn:sub-g} holds, so that
    $\|X_g(t)-\mu_g\|_{\psi_2}\le\nu\sigma_g$. For any $\delta>0$,
    \begin{equation}
        \mathbb{P}\!\left(
            \bigl|R_g(s)-z_g\bigr|
            >
            \delta\bigl(1+|R_g(s)|\bigr)
        \right)
        \;\le\;
        4\exp\!\left(
            -\frac{s\delta^2}{4.5^2\nu^4}
        \right).
        \label{eq:unified-cb-prob}
    \end{equation}
    Moreover, whenever $\delta<1$, on the same high-probability event the
    random-form bound in \eqref{eq:unified-cb-prob} implies
    \begin{equation}
        \bigl|R_g(s)-z_g\bigr|
        \;\le\;
        \frac{\delta(1+|z_g|)}{1-\delta}.
        \label{eq:unified-cb-deterministic}
    \end{equation}
\end{lemma}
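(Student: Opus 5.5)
We reduce to the standardized variables $Y_i = (X_g(t_i)-\mu_g)/\sigma_g$ for the first $s$ draws of arm $g$. By Lemma~\ref{lemma:iid-representation} these are i.i.d.\ with mean zero, unit variance, and $\psi_2$-norm at most $\nu$. We write $\bar Y = s^{-1}\sum_i Y_i$ and $V = s^{-1}\sum_i (Y_i-\bar Y)^2 = \hat\sigma_g^2(s)/\sigma_g^2$. With these,
\[
R_g(s) = \frac{z_g + \bar Y}{\sqrt V},
\qquad\text{so}\qquad
R_g(s) - z_g = \frac{\bar Y}{\sqrt V} + z_g\left(\frac{1}{\sqrt V} - 1\right).
\]
The plan is to control $\bar Y$ and $V$ on a single good event. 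The mean term uses \eqref{eq:concentration-standardized-mean} of Lemma~\ref{lemma:iid-concentration}: with $x = s\delta^2/(4.5^2\nu^4)$, we get $|\bar Y| \le \nu\sqrt{2x/s} \lesssim \delta/\nu$ with probability at least $1-2e^{-x}$. The variance term uses \eqref{eq:concentration-standardized-cov-ub} and \eqref{eq:concentration-standardized-cov-lb} at the same $x$, which gives $|s^{-1}\sum Y_i^2 - 1| \lesssim \nu^2\sqrt{x/s} + \nu^2 x/s$ with total probability at least $1-2e^{-x}$ (using $\kappa(\nu)\le 6.8\nu^4$). Together with $V = s^{-1}\sum Y_i^2 - \bar Y^2$, this controls $|V-1|$. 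A union bound over the three events gives the stated failure probability $4e^{-x}$.

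On the good event we want the random-form bound $|R-z| \le \delta(1+|R|)$. The trick is to express the error in terms of $R$ rather than $z$. Since $z_g = R\sqrt V - \bar Y$,
\[
R - z_g = R\,(1-\sqrt V) + \bar Y.
\]
This gives $|R - z_g| \le |R|\,|1-\sqrt V| + |\bar Y|$, and it suffices to show $|1-\sqrt V| \le \delta$ and $|\bar Y| \le \delta$. For the first, use $|1-\sqrt V| \le |1-V|$. The bound on $|1-V|$ is $\lesssim \nu^2\sqrt{x/s} + \nu^2 x/s + \bar Y^2$, i.e.\ of order $\delta/4.5 + \delta^2/(4.5^2\nu^2)+\dots$.

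\textbf{Main obstacle.} The main obstacle is the constant bookkeeping: the sub-gamma term $x/s$ is quadratic in $\delta$, so it is only $\le \delta$ when $\delta \lesssim 1$. I would handle this by cases.
\begin{itemize}
\item If $\delta \ge 1$, the event $|R-z| > \delta(1+|R|)$ with $\delta\ge1$ would need a separate cheap argument. One option is to note that the claim may be vacuous for small $s$ when $4e^{-x}\ge1$. Otherwise, I would restrict to $\delta<1$ (which is what is used) and tune the constant $4.5$ so that $\sqrt{32}\,\nu^2\sqrt{x/s} + 2\nu^2 x/s + \nu^2\cdot 2x/s \le \delta$ whenever $\delta\le 1$. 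The constant $4.5^2 \approx 20 \ge 32/\ldots$ is meant to be exactly what makes this arithmetic close, and I would check it carefully, possibly conceding a slightly worse constant.
\item If $\delta < 1$, the deterministic conclusion \eqref{eq:unified-cb-deterministic} follows algebraically. From $|R-z| \le \delta(1+|R|) \le \delta(1+|z|+|R-z|)$, rearranging gives $|R-z|(1-\delta) \le \delta(1+|z|)$.
\end{itemize}
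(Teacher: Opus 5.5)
Your architecture matches the paper's. You use the same standardization, the same choice $x=s\delta^2/(4.5^2\nu^4)$, the same identity $R_g(s)-z_g=R_g(s)(1-\sqrt V)+\bar Y$, the same three concentration events with total failure probability $4e^{-x}$, and the same rearrangement for \eqref{eq:unified-cb-deterministic}.

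The gap is in the step $|1-\sqrt V|\le\delta$. Bounding it by $|1-V|$ fails on the upper side. Write $Q=s^{-1}\sum_i Y_i^2$. Then \eqref{eq:concentration-standardized-cov-ub} gives $V-1\le Q-1\le \sqrt{32}\,\delta/4.5+2(\delta/4.5)^2$. Already $\sqrt{32}/4.5\approx 1.26>1$, so no bookkeeping closes this at the constant $4.5$. The quadratic term also makes it fail outright for large $\delta$.

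You also leave the case $\delta\ge1$ open. Vacuity only helps when $4e^{-x}\ge1$, which fails for large $s$. Restricting to $\delta<1$ does not prove the lemma as stated for all $\delta>0$. Moreover, Proposition~\ref{prop:self-normalized-cb} needs the random-form bound when $\tau_\beta(t,s)\ge1$: there $e^+_g=+\infty$, but $E_g$ must still cover $z_g$.

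The missing idea is to treat the two sides of $1-\sqrt V$ separately.

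\emph{Upper side.} With $\epsilon=\delta/4.5$, we have $V-1\le\sqrt{32}\,\epsilon+2\epsilon^2\le 8\epsilon+16\epsilon^2=(1+4\epsilon)^2-1$. Taking square roots gives $\sqrt V-1\le 4\epsilon<\delta$ for \emph{every} $\delta>0$. The square root turns the sub-gamma quadratic term into a linear one. It also recovers the factor $1/(1+\sqrt V)\le 1/2$ that your inequality $|1-\sqrt V|\le|1-V|$ discards.

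\emph{Lower side.} If $\delta\ge1$, then $1-\sqrt V\le 1\le\delta$ trivially, since $V\ge0$. If $\delta<1$, then $1-\sqrt V\le 1-V\le(1-Q)+\bar Y^2$. Using \eqref{eq:concentration-standardized-cov-lb} with $\kappa(\nu)\le 6.8\nu^4$, together with $\bar Y^2\le 2\nu^2x/s$, this is at most about $0.94\,\delta$. This is the only place your $|1-V|$ route actually works.

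With these two pieces in place, the rest of your argument goes through exactly as in the paper.
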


\begin{proof}
Let $c_0$ denote a constant to be optimized in the proof, and set $x = s\delta^2/(c_0^2\nu^4)$. Put $Y_i =(X_g(i) - \mu_g)/\sigma_g$, and define
\[m_s = \frac{1}{s}\sum_{i=1}^s Y_i, \qquad Q_s = \frac{1}{s}\sum_{i=1}^s Y_i^2, \qquad \hat v_s = Q_s - m_s^2.\] Note that $R_g(s) = (m_s + z_g) / \sqrt{\hat v_s}$.

\paragraph{Step 1 (sample mean).} Put $Y_i =(X_g(s) - \mu_g)/\sigma_g$.
By \eqref{eq:concentration-standardized-mean}, outside an event of probability
$2e^{-x}$,
\[
    \left|m_s\right| \le \nu\sqrt{2x/s} = \frac{\delta}{\nu} \sqrt{2/c_0^2}.
\]
Since $\nu \ge 1$, this is at most $\delta$ whenever $c_0 \ge \sqrt{2}$.

\paragraph{Step 2 (lower tail of the sample variance).}
By \eqref{eq:concentration-standardized-cov-lb}, outside an event of probability at most
$e^{-x}$,
\[
    1-Q_s \le \frac{x}{3s} + \sqrt{ \frac{2\kappa(\nu)x}{s}+\frac{x^2}{9s^2}
    }.
\]
Consequently, after intersecting with the event from Step 1,
\begin{equation}
 1-\hat v_s \le \frac{x}{3s} + \sqrt{ \frac{2\kappa(\nu)x}{s}+\frac{x^2}{9s^2}
    } + \frac{2\nu^2x}{s}
    \label{eq:unified-cb-variance-lb}
\end{equation}
If $\delta\ge1$, then
\(
    1-\sqrt{\widehat v_s} \le
    1 \le  \delta
\)
deterministically. Suppose instead that $\delta<1$.
If we choose $x = s\delta^2/(c_0^2\nu^4)$, then \eqref{eq:unified-cb-variance-lb} gives
\begin{align*}
    1-\widehat v_s
    &\le
    \frac{x}{3s} + \sqrt{ \frac{2\kappa(\nu)x}{s}+\frac{x^2}{9s^2}
    } + \frac{2\nu^2x}{s} \\
    &= \delta \left( \frac{\delta}{3c_0^2\nu^4} + \sqrt{\frac{2\kappa(\nu)}{c_0^2 \nu^4} + \frac{\delta^2}{9c_0^4\nu^8}} + \frac{2\delta}{c_0^2 \nu^2}\right). \\
    &\le  \delta \left( \frac{1}{3c_0^2} + \sqrt{\frac{13.6}{c_0^2} + \frac{1}{9c_0^4}} + \frac{2}{c_0^2}\right),
\end{align*}
where in the last step we used $\nu \ge 1$, $\kappa(\nu) \le 6.8\nu^4$, and $\delta < 1$. Evidently, when $c_0 \ge 4.5$,
\begin{equation}
    1-\sqrt{\widehat v_s}
    \le
    \delta.
    \label{eq:unified-cb-sd-lb}
\end{equation}

\paragraph{Step 3 (upper tail of the sample variance).}
By \eqref{eq:concentration-standardized-cov-ub}, outside an event of probability
$e^{-x}$,
\[
    Q_s-1 \le \nu^2\left(\sqrt{\frac{32 x}{s} } + \frac{2x}{s}\right)
    =
   \left(\sqrt{\frac{32\delta^2}{c_0^2} } + \frac{2\delta^2}{c_0^2\nu^2}\right),
\]
where the second equality plugs in our choice of $x$. Since $\widehat v_s\le Q_s$ and $\nu\ge1$,
\[
    \widehat v_s-1  \le \sqrt{32}(\delta/c_0)+2(\delta/c_0)^2 \le 8(\delta/c_0)+16(\delta/c_0)^2
    = [1+ (4\,\delta)/c_0)]^2-1.
\]
Applying the monotone transformation $u \mapsto \sqrt{u + 1} - 1$ to both sides gives $\sqrt{\hat v_s} - 1 \le 4\,\delta / c_0$. 
Combining with \eqref{eq:unified-cb-sd-lb} gives that when $c_0 \ge 4.5$,
\begin{equation}
    \bigl|1-\sqrt{\widehat v_s}\bigr|
    \le
    \delta.
    \label{eq:unified-cb-sd}
\end{equation}

\paragraph{Step 4 (self-normalized deviation).}
Since
\(
    R_g(s)
    =
    {(z_g+m_s)}/{\sqrt{\widehat v_s}},
\)
we have the exact identity
\[
    R_g(s)-z_g
    =
    m_s
    +
    R_g(s)\bigl(1-\sqrt{\widehat v_s}\bigr).
\]
On the intersection of the events in Steps~1--3,
\[
    \bigl|R_g(s)-z_g\bigr|
    \le
    |m_s|
    +
    |R_g(s)|
    \bigl|1-\sqrt{\widehat v_s}\bigr|
    \le
    \delta\bigl(1+|R_g(s)|\bigr).
\]
A union bound gives total failure probability at most
$2e^{-x}+e^{-x}+e^{-x}=4e^{-x}$, which proves
\eqref{eq:unified-cb-prob}. Finally, on the same event,
\[
    \delta\bigl(1+|R_g(s)|\bigr)
    \le
    \delta\bigl(
        1+|z_g|+|R_g(s)-z_g|
    \bigr).
\]
If $\delta<1$, rearranging yields
\[
    |R_g(s)-z_g|
    \le
    \frac{\delta(1+|z_g|)}{1-\delta},
\]
which is \eqref{eq:unified-cb-deterministic}.
\end{proof}

\begin{proof}[Proof of Proposition \ref{prop:self-normalized-cb}]
Apply Lemma~\ref{lemma:unified-cb} with $\delta = \tau_\beta(t,s)$. When $\tau_\beta(t,s) <1$, the deterministic-form bound \eqref{eq:unified-cb-deterministic} reads
\[
    \left|R_g(s) - z_g\right| \;\le\; \frac{\tau_\beta(t,s)\,(1 + |z_g|)}{1 - \tau_\beta(t,s)} \;=\; e^+_g(t,s),
\]
and the random-form bound reads $|R_g(s) - z_g| < (1 + |R_g(s)|)\tau_\beta(t,s) = E_g(t,s)$. With $\delta = \tau_\beta(t,s) = 4.5\,\nu^2\sqrt{\beta\log(t)/s}$, the deviation probability is at most $4t^{-\beta}$. This establishes \eqref{eq:main-cb}.
\end{proof}

\subsection{Abstract regret bound}
Finally, we prove a high-probability regret bound for the SN-UCB algorithm. In particular the two-sided confidence bound of Proposition \ref{prop:self-normalized-cb} gives us: 
\begin{itemize}
    \item an estimate $\hat z_g(s) \coloneqq \hat Z_g(s)/\sqrt{s}$ of $\mu_g/\sigma_g$, where $s$ is the number of times arm $g$ has been drawn;
    \item a random, feasible ``exploration function'' $E_g(t,s)$ for $s \le t$ such that $\hat z_g(s) \pm E_g(t,s)$ likely contains $\mu_g/\sigma_g$; and
    \item a not-necessarily observed, deterministic upper bound $e^+_g(t,s)$ for $E_g(t,s)$.
\end{itemize}  
In this setting, we have the following result.
\begin{proposition}[Abstract regret bound for SN-UCB]
\label{prop:agnostic-ucb}
Suppose for each arm $g$ and each $s \le t$,
\begin{equation}
\mathbb{P}\left\{|\mu_g/\sigma_g - \hat z_g(s)| < E_g(t,s) \le e^+_g(t,s)\right\}
 \ge 1 - \psi(t). \label{eq:abstract-cb}
 \end{equation}
Suppose also that $\psi(-), e^+_g(t,-)$ are decreasing while $E_g(-,s)$ and $e^+_g(-,s)$ are increasing.
Consider the upper confidence bound policy which at time $t$ chooses the arm $g$ that maximizes \[\hat z_g(N_g(t-1)) + E_g(t,N_g(t-1)).\]
Define 
\(
\Delta_g = (\max_{g'} z_{g'}) - z_g,
\) and
\(
u^*_g = \inf\Set[s \ge 1]{2\,e^+_g(T,s) < \D_g}.
\)
For any sub-optimal $g$,
\[
\mathbb{P}(N_g(T) > u^*_g + q) \le 2\int_{u^*_g+q}^T \psi(t)\,dt
.\]
\end{proposition}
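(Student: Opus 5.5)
We follow the standard Auer-type UCB argument, which bounds the number of pulls of a suboptimal arm by a union bound over ``bad'' confidence events. Fix a suboptimal arm $g$ and an optimal arm $g^*$. Write $z^*=z_{g^*}$.

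\textbf{Step 1: a pull beyond $u^*_g$ forces a bad event.} Suppose that at round $t$ the policy selects $g$ and that $N_g(t-1)=s\ge u^*_g$. Since $e^+_g(T,\cdot)$ is decreasing, we have $2e^+_g(T,s)<\Delta_g$. Since $e^+_g(\cdot,s)$ is increasing and $t\le T$, also $e^+_g(t,s)\le e^+_g(T,s)$. The selection rule gives
\[
\hat z_{g^*}(s^*)+E_{g^*}(t,s^*)\ \ge\ \hat z_g(s)+E_g(t,s),
\qquad s^*=N_{g^*}(t-1).
\]
We claim that at least one of the following must fail:
\begin{enumerate}[label=(\alph*)]
\item the optimal arm's interval covers its target, i.e.\ $\hat z_{g^*}(s^*)+E_{g^*}(t,s^*)> z^*$;
\item arm $g$'s good event holds, i.e.\ $|z_g-\hat z_g(s)|<E_g(t,s)\le e^+_g(t,s)$.
\end{enumerate}
To see this, assume both hold. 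By (b),
\[
\hat z_g(s)+E_g(t,s)\ >\ z_g\ \ \text{is not enough; instead use}\ \ \hat z_g(s)+E_g(t,s) < z_g+2E_g(t,s)\le z_g+2e^+_g(T,s)<z^*.
\]
This contradicts the selection rule combined with (a). Note that the upper index may be taken as $T$, or alternatively as $t$ with $e^+$ monotone in $t$; both work.

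\textbf{Step 2: probabilities of the bad events.} For arm $g$, the event that (b) fails at sample size $s$ has probability at most $\psi(t)$ by \eqref{eq:abstract-cb}. For arm $g^*$, the count $s^*$ is random. By the representation of \Cref{lemma:iid-representation}, $\hat z_{g^*}(s^*)$ is the statistic of an i.i.d.\ stream at a random index. We therefore need (a) uniformly over $s^*$, or else we must argue through the fixed-$s$ bound.

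This is the main obstacle. A naive union over $s^*\le t$ gives $t\psi(t)$ rather than $\psi(t)$. The claimed bound $2\int\psi$ suggests that the intended accounting charges one $\psi(t)$ per round to each arm. The first attempt is to read \eqref{eq:abstract-cb} as holding at the random count $s=N_{g}(t-1)$. Given the paper's setup (the confidence event being stated ``for each $s\le t$'' with $\psi(t)=4t^{-\beta}$), we will adopt this reading. If a genuinely uniform-in-$s$ bound is needed, we instead absorb the extra factor $t$ through $\beta>2$, which is presumably why $\beta>2$ is imposed.

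\textbf{Step 3: summation.} The event $N_g(T)>u^*_g+q$ requires some round $t> u^*_g+q$ at which $g$ is pulled with $N_g(t-1)\ge u^*_g$. More carefully, on this event there are rounds $t\in(u^*_g+q,T]$ at which $g$ is played with count at least $u^*_g$. Hence the event is contained in
\[
\bigcup_{t=u^*_g+q}^{T}\{\text{(a) or (b) fails at }t\}.
\]
A union bound gives $\sum_t 2\psi(t)$. Since $\psi$ is decreasing, this sum is bounded by $2\int_{u^*_g+q-1}^{T}\psi$, and an index shift (which we will check) yields the stated $2\int_{u^*_g+q}^T\psi(t)\,dt$.
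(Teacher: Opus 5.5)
Your Step 1 is correct, but Step 2 does not close the argument. The hypothesis \eqref{eq:abstract-cb} is a statement about a fixed, deterministic index $s$, whereas $N_{g^*}(t-1)$ is chosen by the policy from the data. You cannot read the bound as holding at that random count: a UCB policy stops sampling the optimal arm precisely when its upper bound is low, i.e.\ when its interval undercovers.

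Your fallback, a union over $s^*\le t$, yields a bound of order $\sum_t t\psi(t)$. That is a different and weaker statement. Since $\psi$ is arbitrary in this abstract proposition, there is no $\beta$ to absorb the extra factor. The same problem affects arm $g$ in Step 3: ``(b) fails at $t$'' is evaluated at $s=N_g(t-1)$, which is also random. Note too that you never use the hypothesis that $E_g(\cdot,s)$ is increasing, and that is exactly what the missing step needs.

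The fix is to look only at the single round $t_0$ at which $g$ is pulled for the $(u^*_g+q+1)$-th time.

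\emph{Arm $g$.} At $t_0$ we have $N_g(t_0-1)=u^*_g+q$, which is deterministic. The bad event for arm $g$ is therefore $\{\hat z_g(u^*_g+q)-E_g(t_0,u^*_g+q)>z_g\}\cup\{E_g(t_0,u^*_g+q)>e^+_g(t_0,u^*_g+q)\}$. A union over $t_0\in(u^*_g+q,T]$ costs $\sum_{t=u^*_g+q+1}^T\psi(t)$. Because $t_0\ge u^*_g+q+1$, this also removes your off-by-one.

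\emph{Optimal arm.} At $t_0$ we have $s^*=N_{g^*}(t_0-1)\le t_0-u^*_g-q$. Since $E_{g^*}(\cdot,s^*)$ is increasing, the failure $\hat z_{g^*}(s^*)+E_{g^*}(t_0,s^*)\le z^*$ implies $\hat z_{g^*}(s^*)+E_{g^*}(s^*+u^*_g+q,s^*)\le z^*$. This event no longer depends on $t_0$. Hence the union over all $t_0$ and all possible values of $s^*$ collapses to $\bigcup_{1\le s\le T-u^*_g-q}\{\hat z_{g^*}(s)+E_{g^*}(s+u^*_g+q,s)\le z^*\}$. Each term has a fixed index $s\le s+u^*_g+q$, so it has probability at most $\psi(s+u^*_g+q)$. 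Re-indexing with $t=s+u^*_g+q$ gives a second copy of $\sum_{t=u^*_g+q+1}^T\psi(t)$.

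Finally, $\psi(t)\le\int_{t-1}^t\psi$ because $\psi$ is decreasing, which gives $2\int_{u^*_g+q}^T\psi(t)\,dt$. The key point is that each possible sample size of the optimal arm is charged once, not once per round.
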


\begin{proof}[Proof of Proposition \ref{prop:agnostic-ucb}]
Without loss of generality, assume arm $1$ is optimal. Suppose that at time $t$ we choose arm $g>1$. Then one of the following
events must have taken place:
\begin{align*}
   \mathcal{E}_{1,t}
   &=
   \set{
       \hat z_1(N_1(t-1))
       +
       E_1(t,N_1(t-1))
       \le z_1
   }, \\
   \mathcal{E}_{2,t}
   &=
   \set{
       \hat z_g(N_g(t-1))
       -
       E_g(t,N_g(t-1))
       > z_g
   }
   \cup
   \set{
       E_g(t,N_g(t-1))
       >
       e^+_g(t,N_g(t-1))
   }, \\
   \mathcal{E}_{3,t}
   &=
   \set{
       \D_g
       <
       2\,e^+_g(T,N_g(t-1))
   }.
\end{align*}
Indeed, on
$(\mathcal E_{1,t}\cup\mathcal E_{2,t}\cup\mathcal E_{3,t})^c$,
we have
\begin{align*}
\hat z_1(N_1(t-1)) + E_1(t,N_1(t-1))
&> z_1
&&(\t{on }\mathcal E_{1,t}^c) \\
&= z_g+\D_g \\
&\ge z_g+2\,e^+_g(T,N_g(t-1))
&&(\t{on }\mathcal E_{3,t}^c) \\
&\ge z_g+2\,e^+_g(t,N_g(t-1))
&&(\t{by monotonicity}) \\
&\ge z_g+e^+_g(t,N_g(t-1))
       +E_g(t,N_g(t-1))
&&(\t{on }\mathcal E_{2,t}^c) \\
&\ge \hat z_g(N_g(t-1))
       +E_g(t,N_g(t-1))
&&(\t{on }\mathcal E_{2,t}^c),
\end{align*}
which implies that arm $g$ was not chosen.

Now put
\(
u
=
\inf\Set[s\ge1]{
    2\,e^+_g(T,s) < z_1-z_g
},
\)
and consider the event that $N_g(T)>u+q$, for a suboptimal arm $g>1$.
Then there must be a time $t_0$ satisfying
$u+q<t_0\le T$ at which $g$ is drawn for the
$(u+q+1)^{\mathrm{th}}$ time. At time $t_0$, arm $g$ has been drawn
exactly $u+q$ times, while arm $1$ has been drawn
$s\le t_0-u-q$ times. Correspondingly, define
\[
\mathcal E(t_0)
=
\set{
    N_g(t_0)>N_g(t_0-1)=u+q,
    \t{ and }
    N_1(t_0-1)\le t_0-u-q
}.
\]
Since $N_g(t_0-1)=u+q\ge u$ and $e_g^+(T,-)$ is decreasing,
we have
\[
    2e_g^+(T,N_g(t_0-1))
    \le
    2e_g^+(T,u)
    \le
    \D_g,
\]
and hence
$\mathcal E(t_0)\subset\mathcal E_{3,t_0}^c$.
Since a suboptimal arm is drawn, it follows that
\begin{align*}
\mathcal E(t_0)
&\subset
   \bigl(\mathcal E(t_0)\cap\mathcal E_{1,t_0}\bigr)
   \cup
   \bigl(\mathcal E(t_0)\cap\mathcal E_{2,t_0}\bigr) \\
&\subset
   \set{
       \exists s\le t_0-u-q
       \t{ s.t. }
       \hat z_1(s)+E_1(t_0,s)\le z_1
   } \\
&\qquad\cup
   \set{
       \hat z_g(u+q)-E_g(t_0,u+q)>z_g
   } \\
&\qquad\cup
   \set{
       E_g(t_0,u+q)>e_g^+(t_0,u+q)
   }.
\intertext{
Since $s+u+q\le t_0$ and $E_1(-,s)$ is increasing, we may expand
the first event to obtain
}
\mathcal E(t_0)
&\subset
   \set{
       \exists s\le t_0-u-q
       \t{ s.t. }
       \hat z_1(s)+E_1(s+u+q,s)\le z_1
   } \\
&\qquad\cup
   \set{
       \hat z_g(u+q)-E_g(t_0,u+q)>z_g
   } \\
&\qquad\cup
   \set{
       E_g(t_0,u+q)>e_g^+(t_0,u+q)
   }.
\end{align*}
It follows that
\begin{align*}
\set{N_g(T)>u+q}
&\subset
\bigcup_{t_0>u+q}\mathcal E(t_0) \\
&\subset
\left(
\bigcup_{1\le s\le T-u-q}
\set{
    \hat z_1(s)+E_1(s+u+q,s)\le z_1
}
\right. \\
&\qquad\quad
\cup
\bigcup_{u+q<t\le T}
\set{
    \hat z_g(u+q)-E_g(t,u+q)>z_g
} \\
&\left.\qquad\quad
\cup
\bigcup_{u+q<t\le T}
\set{
    E_g(t,u+q)>e_g^+(t,u+q)
}
\right).
\intertext{
Re-indexing the first union with $t=s+u+q$ gives
}
\set{N_g(T)>u+q}
&\subset
\bigcup_{u+q<t\le T}
\set{
    \hat z_1(t-u-q)+E_1(t,t-u-q)\le z_1
} \\
&\qquad\quad
\cup
\bigcup_{u+q<t\le T}
\left[
\set{
    \hat z_g(u+q)-E_g(t,u+q)>z_g
}
\right. \\
&\left.\qquad\qquad\qquad\qquad
\cup
\set{
    E_g(t,u+q)>e_g^+(t,u+q)
}
\right].
\end{align*}
The first event $\{\hat z_1 + E_1 \le z_1\} = \{z_1 - \hat z_1 \ge E_1\}$ is contained in $\{|z_1 - \hat z_1| \ge E_1\}$, and hence in the complement of the (strict) confidence event
\eqref{eq:abstract-cb} for arm $1$. The union of the second and third
events is contained in the complement of \eqref{eq:abstract-cb} for arm
$g$, since
\[
\set{\hat z_g-E_g>z_g}
\cup
\set{E_g>e_g^+}
\subset
\set{
    |z_g-\hat z_g| < E_g\le e_g^+
}^c.
\]
We may therefore apply a union bound to obtain
\begin{align*}
\mathbb P\{N_g(T)>u+q\}
&\le
2\sum_{t=u+q+1}^T\psi(t).
\end{align*}
Bounding the sum by an integral over $t\in[u+q,T]$ gives the result.
\end{proof}

\subsection{Proof of Theorem \ref{thm:sn-ucb-regret-bound}}\label{sec:proof-regret-thm}
Without loss of generality, assume the optimal arm has index $g = 1$. To apply Proposition \ref{prop:agnostic-ucb} we verify the hypothesis \eqref{eq:abstract-cb}, which we restate:
\[
    \mathbb{P}\!\left\{|\mu_g/\sigma_g - \hat z_g(s)| < E_g(t,s) \le e^+_g(t,s)\right\} \ge 1 - \psi(t).
\]
By Proposition \ref{prop:self-normalized-cb}, this hypothesis is satisfied with $\psi(t) = C_\nu t^{-\beta}$,
\[
    \tau_\beta(t,s) = 4.5\,\nu^2\sqrt{\beta\log(t)/s},
    \qquad
    E_g(t,s) = \bigl(1 + |\hat z_g(s)|\bigr)\,\tau_\beta(t,s),
\]
and the deterministic upper bound
\[
    e^+_g(t,s) =
    \begin{cases}
        \tau_\beta(t,s)\,(1 + |z_g|)/\bigl(1 - \tau_\beta(t,s)\bigr) & \text{if } \tau_\beta(t,s) < 1, \\
        +\infty & \text{otherwise,}
    \end{cases}
\]
where $z_g \coloneqq \mu_g/\sigma_g$. Recall from Proposition \ref{prop:agnostic-ucb} that $u^*_g = \inf\{s \ge 1 : 2\,e^+_g(T,s) < \Delta_g\}$. Writing $n_0(T) \coloneqq 22\nu^4\beta\log(T)$, the standard calculation gives
\[
    u^*_g \;\le\; n_0(T) \;+\; \frac{C_\nu(1 + |z_g|)^2\,\beta\log T}{\Delta_g^2}.
\]
Proposition \ref{prop:agnostic-ucb} then implies that for $g > 1$,
\[
    \mathbb{P}\bigl(N_g(T) > u^*_g + q\bigr) \;\le\; C \int_{u^*_g+q}^T t^{-\beta}\,dt.
\]
For $\beta > 2$, integrating the tail in $q$ yields $\mathbb{E}\,N_g(T) \lesssim u^*_g$. 
Summing over suboptimal arms with the per-arm bound on $u^*_g$,
\begin{align*}
    \mathbb{E}[R_T] = \mathbb{E}\!\left[\sum_{g: z_g < z^*} \Delta_g\,N_g(T)\right] 
    &\lesssim \sum_{g: z_g < z^*}^k \Delta_g\!\left[n_0(T) \;+\; \frac{C_\nu(1+|z_g|)^2\,\beta\log T}{\Delta_g^2}\right] \\
    &= \sum_{g: z_g < z^*}\!\left[\Delta_g\,n_0(T) \;+\; \frac{C_\nu(1+|z_g|)^2\,\beta\log T}{\Delta_g}\right].
\end{align*}
Since $n_0(T) = 22\nu^4\,\beta\log(T) \le C_\nu\,\beta\log T$, the warm-up contribution from arm $g$ is $\Delta_g\,n_0(T) \le C_\nu\,\beta\log T \cdot \Delta_g$, so
\[
    \mathbb{E}[R_T] \;\le\; C_\nu\,\beta\log T \sum_{g: z_g < z^*}\!\left\{\Delta_g + \frac{(1+|z_g|)^2}{\Delta_g}\right\}.
\]
The same calculation without the leading $\Delta_g$ in the sum gives
\begin{align*}
    \mathbb{E}[E_T] = \mathbb{E}\left[\sum_{g: z_g < z^*} N_g(T)\right] &\lesssim \sum_{g: z_g < z^*}\!\left[n_0(T) + \frac{C_\nu(1+|z_g|)^2\,\beta\log T}{\Delta_g^2}\right] \\ 
    &\le C_\nu\,\beta\log T \sum_{g: z_g < z^*}\!\left\{1 \;+\; \frac{(1+|z_g|)^2}{\Delta_g^2}\right\}. \qed
\end{align*}

\section{UCT Application: Calibration and Supporting Results}

\label{app:uct}

This appendix gives the calibration details and supporting simulations for the unconditional cash transfer (UCT) simulation of Section~\ref{sec:uct}.

\subsection{Setup}

A program variant (``arm'') is the combination of a \emph{targeting rule}, which determines which households are eligible for the transfer, and a transfer amount $T_g \in \{\$20, \$50\}$ per month. We consider seven targeting rules, defined in Section~\ref{sec:uct-rules} below, paired with both transfer amounts, for $k = 14$ arms in total. Outcomes are modeled as $X_g(t) \sim \mathcal{N}(\mu_g, \sigma_g^2)$, where $X_g(t)$ is the monthly consumption gain (USD) of the $t$-th household assigned to arm $g$. A variant clears the cost-effectiveness bar if it generates at least $c = 0.60$ dollars of consumption gain per dollar transferred, so the per-arm threshold is $u_g = c\, T_g$ (\$12 for the \$20 transfer, \$30 for the \$50 transfer). Working with shifted observations $\tilde X_g(t) = X_g(t) - u_g$ reduces the problem to the standard form $\mathcal{H}_0: \max_g \tilde\mu_g \le 0$ used throughout the paper.

\subsection{Targeting Rules}
\label{sec:uct-rules}

Each rule is described in one sentence; in each case the rule determines the eligible population, and every household in the eligible population receives the transfer.
\begin{itemize}
  \item \emph{Universal:} every household in the program area.
  \item \emph{Geographic (rural):} every household in a designated rural area.
  \item \emph{Proxy means test (PMT):} households scoring in the bottom 20\% of an asset-based poverty index (e.g., flooring material, livestock, durable goods).
  \item \emph{Demographic:} households containing at least one child under age five.
  \item \emph{Community-based targeting (CBT):} households selected by community members, typically through a village ranking exercise.
  \item \emph{Self-targeting:} households that complete a costly application (e.g., travel to a central enrollment site), which screens out better-off households that find the application not worth the effort.
  \item \emph{Categorical (elderly):} households containing at least one member above an age cutoff (typically 65 or 70).
\end{itemize}
The first four rules above are widely studied in the program-evaluation literature; we treat them as core. Community-based targeting, self-targeting, and the elderly categorical rule are included to broaden the menu and are described further in Section~\ref{sec:uct-rules-extra}.

\subsection{Calibrated Arm Means}

For each arm we write $\mu_g = \rho_g \cdot T_g$, where $\rho_g \in [0,1]$ is the \emph{gain rate}: the fraction of each transferred dollar that shows up as measured monthly consumption gain in the eligible population. The picks for $\rho_g$ are anchored in published evaluations of comparable programs; we describe the picks for the core rules here and the additional rules in Section~\ref{sec:uct-rules-extra}. Two examples illustrate the approach:
\begin{itemize}
  \item \emph{Geographic, \$50 transfer.} \citet{haushofer2016short} report near-unit pass-through from rural transfer to measured consumption in GiveDirectly's Kenya program. We use $\rho = 0.80$ -- a downward adjustment to reflect the broader ``rural'' rule -- giving $\mu = \$40$.
  \item \emph{PMT, \$20 transfer.} \citet{merttens2013kenya} report sizable food-consumption gains among PMT-selected households in the Kenya Hunger Safety Net Programme. We use $\rho = 0.80$, giving $\mu = \$16$.
\end{itemize}
Universal transfers reach many households well above the poverty line and translate a smaller share of each dollar into measured consumption; we use $\rho \approx 0.40$, broadly consistent with the cross-program review of \citet{bastagli2016cash}. Demographic rules sit between universal and PMT in selectivity; we use $\rho \approx 0.65$ \citep{aguero2009impact, kabeer2012economic}.

\subsection{Calibrated Variances}

The standard deviations $\sigma_g$ reflect two sources of household-level spread.
\begin{enumerate}
  \item \emph{Heterogeneity of the eligible population.} Universal rules reach the full income distribution and so produce highly variable responses; PMT-selected households are more homogeneous.
  \item \emph{Composition of consumption.} Small transfers to very poor households go mostly to predictable necessities (food, medicine), with low household-to-household variance \citep{dupas2013savings}. Larger transfers, especially under broader targeting rules, are often used for one-off investments such as livestock, business equipment, or home repair, whose short-run consumption pass-through is highly variable \citep{blattman2014generating, egger2022general}.
\end{enumerate}
The two effects compound: within each rule, both $\mu_g$ and $\sigma_g$ rise with the transfer amount, and at a fixed transfer $\sigma_g$ tends to be larger for broader rules. The specific magnitudes of $\sigma_g$ are chosen in a stylized manner reflecting these two forces.

The resulting parameter table is given in Table~\ref{tab:uct-arms}. The structural feature relevant for adaptive sampling is that the arm with the largest \emph{net} mean (Geographic-High, $\tilde\mu = \$10$) is not the arm with the largest signal-to-noise ratio (PMT-Low, SNR $= 0.50$).

\begin{table}[htbp]
\centering
\small
\begin{tabular}{clcccccc}
\toprule
Arm & Targeting & $T_g$ & $\mu_g$ & $\sigma_g$ & $u_g$ & $\tilde\mu_g$ & SNR \\
\midrule
1  & Universal     & \$20 & \$8  & 20 & \$12 & $-$\$4  & null \\
2  & Universal     & \$50 & \$20 & 45 & \$30 & $-$\$10 & null \\
3  & Geographic    & \$20 & \$16 & 16 & \$12 & $+$\$4  & 0.25 \\
4  & Geographic    & \$50 & \$40 & 45 & \$30 & $+$\$10 & 0.22 \\
5  & PMT           & \$20 & \$16 &  8 & \$12 & $+$\$4  & \textbf{0.50} \\
6  & PMT           & \$50 & \$36 & 22 & \$30 & $+$\$6  & 0.27 \\
7  & Demographic   & \$20 & \$13 & 14 & \$12 & $+$\$1  & 0.07 \\
8  & Demographic   & \$50 & \$32 & 35 & \$30 & $+$\$2  & 0.06 \\
\midrule
9  & CBT           & \$20 & \$13 & 15 & \$12 & $+$\$1  & 0.07 \\
10 & CBT           & \$50 & \$33 & 32 & \$30 & $+$\$3  & 0.09 \\
11 & Self-target.  & \$20 & \$14 & 18 & \$12 & $+$\$2  & 0.11 \\
12 & Self-target.  & \$50 & \$33 & 40 & \$30 & $+$\$3  & 0.08 \\
13 & Elderly       & \$20 & \$13 & 20 & \$12 & $+$\$1  & 0.05 \\
14 & Elderly       & \$50 & \$33 & 42 & \$30 & $+$\$3  & 0.07 \\
\bottomrule
\end{tabular}
\caption{UCT program variants (arms). $T_g$ is the monthly transfer; $\mu_g$ and $\sigma_g$ are the gross consumption-gain mean and standard deviation; $u_g = 0.60 \cdot T_g$ is the cost-effectiveness threshold; $\tilde\mu_g = \mu_g - u_g$ is the net mean; SNR is $\tilde\mu_g/\sigma_g$. Arms 1--2 are null. The largest-mean arm (4, Geographic-High) and the largest-SNR arm (5, PMT-Low) differ. Arms 9--14 are additional targeting rules proposed in the literature; their parameter values and justification are described in Appendix~\ref{sec:uct-rules-extra}.}
\label{tab:uct-arms}
\end{table}

\subsection{Algorithms and Allocation Paths}

We compare SN-UCB (Algorithm~\ref{algo:sn-ucb}), three mean-based adaptive baselines, and uniform allocation paired with a Bonferroni-corrected one-sided $t$-test. The mean-based baselines are: standard UCB \citep{auer2002finite}; a variance-aware UCB \citep{audibert2009exploration} with bonus $\hat\sigma_g \sqrt{c\, \log t / N_g(t)}$; and Thompson sampling \citep{thompson1933likelihood}. 

Figure~\ref{fig:uct-allocation} traces the average allocation $N_g(t)/t$ for each arm under SN-UCB and the variance-aware UCB at $T = 500$, averaged over 1{,}000 replications. SN-UCB devotes the largest share of its samples to PMT-Low, with PMT-High and the geographic arms also receiving meaningful allocations and the remaining rules collectively absorbing a smaller share. Variance-aware UCB instead concentrates the bulk of its samples on Geographic-High. Both rules allocate few samples to the Universal arms, which carry strongly negative net means. The allocation paths explain the power gap in Figure~\ref{fig:uct-power}: SN-UCB concentrates on the best-SNR arm, which generates more statistical evidence per sample than UCB's preferred largest-mean arm.

\begin{figure}[htbp]
\centering
\includegraphics[width=0.95\textwidth]{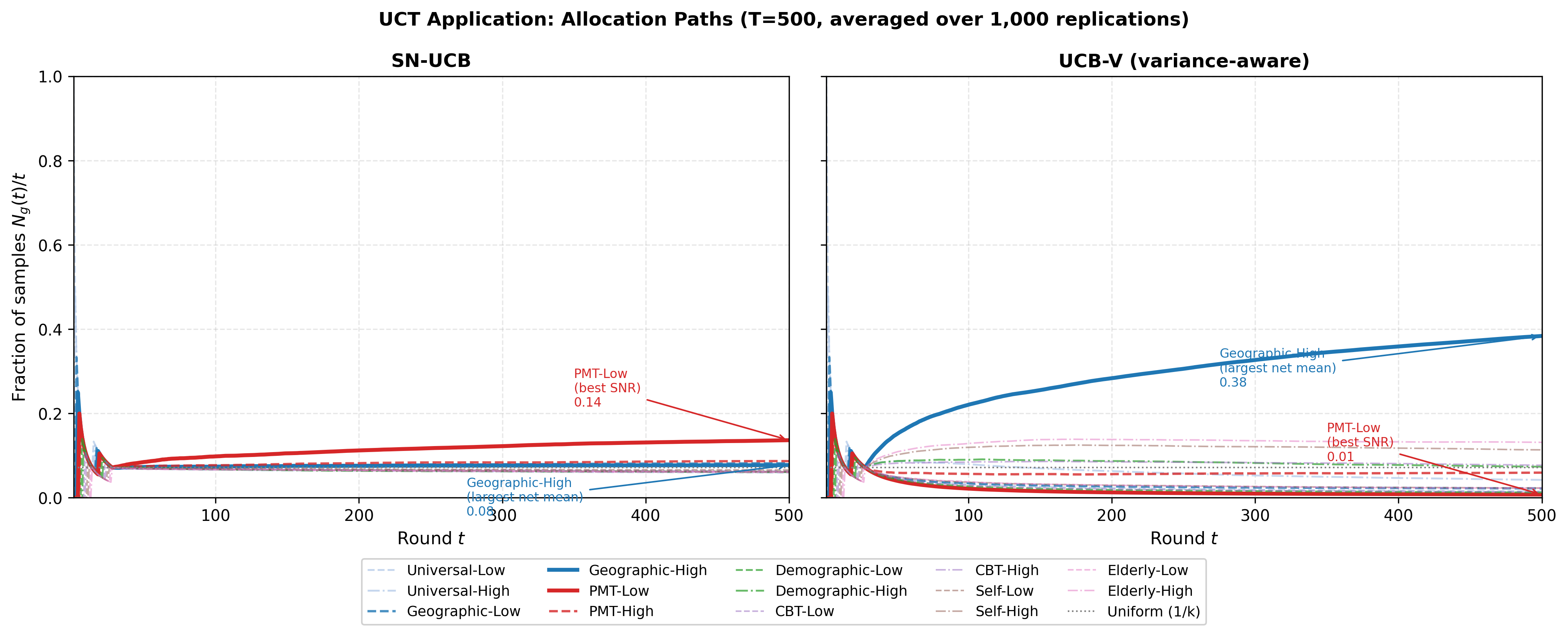}
\caption{Average sample-allocation fractions $N_g(t)/t$ for each of the fourteen UCT arms over a horizon of $T = 500$, averaged over 1{,}000 Monte Carlo replications. Left: SN-UCB concentrates on PMT-Low (arm~5), the highest-SNR arm. Right: a variance-aware UCB benchmark concentrates on Geographic-High (arm~4), the highest-mean arm. Both rules deallocate away from the Universal arms (1 and 2), which fall below the cost-effectiveness threshold.}
\label{fig:uct-allocation}
\end{figure}

\subsection{Additional Targeting Rules}
\label{sec:uct-rules-extra}

The empirical anchoring for community-based targeting, self-targeting, and the elderly categorical rule (arms 9--14) is looser than for the four core rules. In each case we cite one or two studies that motivated the parameter pick, but the gain rates and standard deviations are not estimates.

\begin{itemize}
  \item \emph{Community-based targeting (arms 9--10).} Gain rate $\rho = 0.65$; $\sigma = 15$ at the low transfer and $32$ at the high transfer. \citet{alatas2012targeting} compare PMT, CBT, and a hybrid in 640 Indonesian villages; the paper reports targeting accuracy rather than consumption pass-through, and we infer a somewhat-below-PMT gain rate from the modest CBT-versus-PMT inclusion-error gap. \citet{sumarto2024community} document broadly similar performance at scale.
  \item \emph{Self-targeting (arms 11--12).} Gain rate $\rho = 0.70$ at the low transfer and $0.66$ at the high transfer; $\sigma = 18$ and $40$. \citet{alatas2016self} run a village-level experiment in Indonesia in which beneficiaries must travel to a central site to apply, in place of automatic enrollment based on the same asset test; the application requirement selects substantially poorer applicants than direct enrollment. We pick a gain rate close to PMT among the self-selected applicants and attenuate slightly at the higher transfer level.
  \item \emph{Categorical, elderly (arms 13--14).} Gain rate $\rho = 0.65$; $\sigma = 20$ and $42$. \citet{bando2020non-contributory} report 71\% of Mexico's 70 y M{\'a}s pension flowing to shared household consumption, with corresponding household-consumption gains of roughly 23\%; non-contributory pension programs in Peru and Paraguay produce gains in the 40\%--44\% range. The high $\sigma$ values reflect heterogeneity across recipient households documented by \citet{duflo2003grandmothers, ardington2009labor, case1998large}.
\end{itemize}


\spacingset{1.25}

\end{document}